\DeclareMathOperator{\Ad}{Ad}
\DeclareMathOperator{\ad}{ad}
\DeclareMathOperator{\Aut}{Aut}
\DeclareMathOperator{\can}{can}
\DeclareMathOperator{\e}{e}
\DeclareMathOperator{\fppf}{fppf}
\DeclareMathOperator{\Grass}{Grass}
\DeclareMathOperator{\GL}{GL}
\DeclareMathOperator{\gl}{\mathfrak{gl}}
\DeclareMathOperator{\h}{h}
\DeclareMathOperator{\had}{had}
\DeclareMathOperator{\haut}{ht}
\DeclareMathOperator{\im}{Im}
\DeclareMathOperator{\Isom}{Isom}
\DeclareMathOperator{\Lie}{Lie}
\DeclareMathOperator{\nil}{Nil}
\DeclareMathOperator{\N}{N}
\DeclareMathOperator{\PGL}{PGL}
\DeclareMathOperator{\rad}{rad}
\DeclareMathOperator{\radu}{\mathfrak{rad_u}}
\DeclareMathOperator{\Rad}{Rad}
\DeclareMathOperator{\rank}{rank}
\DeclareMathOperator{\rg}{rg}
\DeclareMathOperator{\SL}{SL}
\DeclareMathOperator{\Spec}{Spec}
\DeclareMathOperator{\Ss}{ss}
\DeclareMathOperator{\Sc}{sc}
\newcommand{\cat}[1]{\underline{\mathrm{#1}}}
\newcommand{\newcategory}[2]{
    \newcommand{#1}{\cat{#2}}
}
\newcategory{\Ab}        {Ab}
\newcategory{\Alg}        {Alg}
\newcategory{\Bij}        {Bij}
\newcategory{\Cat}        {Cat}
\newcategory{\Field}    {Field}
\newcategory{\Fin}        {Fin}
\newcategory{\Graph}    {Graph}
\newcategory{\Grp}        {Grp}
\newcategory{\Mod}        {Mod}
\newcategory{\Mon}        {Mon}
\newcategory{\Op}        {Op}
\newcategory{\Pos}        {Pos}
\newcategory{\Ring}        {Ring}
\newcategory{\Set}        {Set}
\newcategory{\Top}        {Top}
\newcommand{\red}{\mathrm{red}}
\newcolumntype{P}[1]{>{\centering\arraybackslash}p{#1}}
\DeclareMathAlphabet{\mathpzc}{OT1}{pzc}{m}{it}
\newtheorem{theorem}{Theorem}[section]
\newtheorem*{theorem*}{Theorem}
\newtheorem{proposition}[theorem]{Proposition}
\newtheorem{statement}[theorem]{Statement}
\newtheorem{lemma}[theorem]{Lemma}
\newtheorem{corollary}[theorem]{Corollary}
\newtheorem*{corollary*}{Corollary}
\theoremstyle{definition}
\newtheorem{defn}[theorem]{Definition}
\theoremstyle{definition}
\newtheorem{remark}[theorem]{Remark}
\theoremstyle{definition}
\newtheorem{remarks}[theorem]{Remarks}
\theoremstyle{definition}
\theoremstyle{definition}
\newtheorem{exs}[theorem]{Examples}
\string\usetikzlibrary{decorations.markings} to use arrow with markings}{}}{}%
\begin{document}

\newcommand\blankpage{%
\null
    \thispagestyle{empty}%
    \addtocounter{page}{-1}%
    \newpage}
    
\renewcommand{\contentsname}{Table des matières}

\renewcommand{\labelenumii}{(\roman{enumii})}
\renewcommand{\nomname}{Notations}

\def\restriction#1#2{\mathchoice
              {\setbox1\hbox{${\displaystyle #1}_{\scriptstyle #2}$}
              \restrictionaux{#1}{#2}}
              {\setbox1\hbox{${\textstyle #1}_{\scriptstyle #2}$}
              \restrictionaux{#1}{#2}}
              {\setbox1\hbox{${\scriptstyle #1}_{\scriptscriptstyle #2}$}
              \restrictionaux{#1}{#2}}
              {\setbox1\hbox{${\scriptscriptstyle #1}_{\scriptscriptstyle #2}$}
              \restrictionaux{#1}{#2}}}
\def\restrictionaux#1#2{{#1\,\smash{\vrule height .8\ht1 depth .85\dp1}}_{\,#2}}

\newcommand{\closure}[2][3]{%
  {}\mkern#1mu\overline{\mkern-#1mu#2}}

\newcommand{\RR}{\ensuremath{\mathbb{R}}}
\newcommand{\KK}{\ensuremath{\mathbb{K}}}
\newcommand{\TT}{\ensuremath{\mathbb{T}}}

\newcommand{\identity}{\mathbbm{1}}
\newcommand{\cchi}{\protect\raisebox{2pt}{$\chi$}}

\newcommand
{\expp}[2]{\ensuremath{\exp \left({2 \pi i \{{#1 }\}_{#2}}\right)}}

\newcommand
{\expr}[2]{\ensuremath{\exp \left( {#1 }{2 \pi i {#2 }}\right)}}

\newcommand
{\uroot}[2]{\ensuremath{\exp \left(2 \pi i \frac{#1}{#2} \right)}}

\newcommand
{\fp}[1]{\ensuremath{\{#1\}_{p}}}

\newcommand
{\quo}[2]{\ensuremath{#1/\!\raisebox{-.65ex}{\ensuremath{#2}}}}

\newcommand\abs[1]{\lvert#1\rvert}
\newcommand\Abs[1]{\left|#1\right|}
\newcommand\bigAbs[1]{\bigl|#1\bigr|}

\def\stackbelow#1#2{\underset{\displaystyle\overset{\displaystyle\shortparallel}{#2}}{#1}}

      \title{Analogues of Morozov's Theorem in characteristic \texorpdfstring{$p>0$}{Lg}}
	\author{Marion Jeannin}
\maketitle

\abstract{Let $k$ be an algebraically closed field of characteristic $p>0$ and let $G$ be a reductive $k$-group. In this article we prove an analogue of Morozov's Theorem when $p$ is separably good for $G$ and under some extra assumptions. Morozov's Theorem characterises, in characteristic $0$, the Lie algebras of parabolic subgroups of $G$ by means of their nilradical. We also obtain an analogue in characteristic $p>0$ of a well known corollary of Morozov's Theorem, that characterises, in characteristic $0$, maximal proper subalgebras of a reductive Lie algebra.
If now $k$ is any field of characteristic $p\geq 0$, let $X$ be a smooth projective geometrically connected $k$-curve. Let us assume that $G$ is a reductive $k$-group which is the twisted form of a constant $X$-group. The existence of the aforementioned analogue in particular allows to adapt the construction of the canonical parabolic subgroup of $G$ proposed by M. Atiyah and R. Bott when $k$ is of characteristic $0$ (\cite{AB}) to the positive characteristic framework,.}

\section{Introduction}
\label{intro}
Let $k$ be a field and $G$ be a reductive $k$-group. In this article we adopt the convention of \cite[XIX]{SGA33}. In particular a reductive $k$-group is connected (see Définition 2.7 ibid.). We denote by $\mathfrak{g}:= \Lie(G)$ the corresponding Lie algebra. If $H\subseteq G$ is a subgroup we denote:
\begin{itemize}
\item by $\mathfrak{h}:= \Lie(H)$ the Lie algebra of $H$,
\item by $H_{\red}$ the reduced part of $H$,
\item by $H^0$ the connected component of the identity of $H$.
\end{itemize}  

Before going any further we remind the reader that is the context of characteristic $p>0$, some Lie algebras come endowed with an additional $p$-power map $\mathfrak{g} \rightarrow \mathfrak{g}, \ x  \mapsto x^{[p]}$ such that:
\begin{enumerate}
\item for any $\lambda \in k$ and any $x \in \mathfrak{g}$, the scalar multiplication is compatible with the $p$-power map, namely $(\lambda x)^{[p]} = \lambda^p x^{[p]}$,
\item for any $x \in \mathfrak{g}$, the Lie bracket is compatible with the $p$-power map, namely $\ad(x^{[p]}) = (\ad(x))^p$,
\item for any $x_1, \ x_2 \in \mathfrak{g}$ we have 
\[(x_1+x_2)^{[p]} = x_1^{[p]} + x_2^{[p]} -W(x_1,x_2),\] where we have set 
\[W(x_1,x_2) = \sum_{0<r<p}\frac{1}{r}\sum_u \ad x_{u(1)}\ad x_{u(2)} \cdots \ad x_{u(p-1)}(x_1)\] 
for $u$ going through all the maps $[1, p-1] \rightarrow \{0,1\}$ that take $r$ times the value $0$.
\end{enumerate}
Given a restricted $p$-Lie algebra, restricted $p$-subalgebras, respectively $p$-ideals, are subalgebras, respectively ideals that are stable under the $p$-power map.

For more details we refer the reader to \cite[II, \S7, n\degree 3]{DG} and \cite[chapter 2]{FS}. Let us only briefly remind here a few specificities and definitions:
\begin{enumerate}
\item Let $\mathfrak{g}$ be a restricted $p$-Lie algebra $\mathfrak{g}$ over $k$. An element $x \in \mathfrak{g}$ is $p$-nil if there is an integer $m \in \mathbb{N}$ such that $x^{[p]^m}= 0$. A restricted $p$-nil algebra is $p$-nil if all its elements are $p$-nilpotent. When $\mathfrak{g}$ is of finite dimension any restricted $p$-subalgebra which is $p$-nilpotent is also $p$-nil. Moreover, let $\h(G)$ be the Coxeter number of $G$. The values of such are listed in Table \ref{Hypothèses_sur_la caractéristiqu_ pour_un_groupe_simple} for any type of $G$. When $p>\h(G)$ G. McNinch shows in \cite{MAUS} that any nilpotent element $x \in \mathfrak{g}$ has an order of $p$-nilpotency equal to $1$ (that is $x^{[p]} = 0$).
\item When $\mathfrak{g}$ is a restricted $p$-subalgebra, we denote:
\begin{itemize}
\item by $\rad(\mathfrak{g})$ the solvable radical of $\mathfrak{g}$, namely its maximal solvable ideal,
\item by $\rad_p(\mathfrak{g})$ its $p$-radical, namely its maximal $p$-nilpotent ideal.
\item by $\nil(\mathfrak{g})$ its nilradical, that is, its maximal nilpotent ideal,
\item by $\mathfrak{z_g}$ its center,
\item by $\radu(\mathfrak{g})$ the Lie algebra of the unipotent radical of $G$ (if $\mathfrak{g}=\Lie(G)$ is the Lie algebra of a smooth connected algebraic group). 
\end{itemize} 
\noindent We briefly recall the following properties (see \cite[Lemma 2.1]{VAS} and \cite[Lemmas 2.6 and 2.13]{JEA1} together with \cite[Remarks 2.7]{JEA1}).
\begin{lemma}
Let $k$ be a field of characteristic $p>0$,  
\begin{enumerate}
\item let $G$ be a connected reductive $k$-group and assume that $p$ is separably good for $G$ (see \cite[Definition 2.2]{PevSta} or \ref{hypothèses_car} for a precise definition, in practice, this amounts to avoid very small characteristics and some factors of $A_{p-1}$-type in characteristic $p$). Then the equalities $\mathfrak{z_g}=\rad(\mathfrak{g}) = \nil(\mathfrak{g})$ hold true; 
\item let $H$ be a smooth connected $k$-group and assume that $p$ is separably good for the quotient $H/\Rad_U(H)$. Then the $p$-radical of $\mathfrak{h}$ is both the Lie algebra of the unipotent radical of $H$ and the set of all $p$-nilpotent elements of $\rad(\mathfrak{h})$.
\end{enumerate}
\label{reminder_rad}
\end{lemma}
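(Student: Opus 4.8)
The plan is to treat the two assertions in turn, reducing the second to the first. Throughout, the backbone for (1) is the chain of inclusions $\mathfrak{z_g} \subseteq \nil(\mathfrak{g}) \subseteq \rad(\mathfrak{g})$, which holds for any finite-dimensional Lie algebra: the centre is an abelian, hence nilpotent, ideal and so sits inside the nilradical, while every nilpotent ideal is solvable and so $\nil(\mathfrak{g}) \subseteq \rad(\mathfrak{g})$. It therefore suffices to prove the single reverse inclusion $\rad(\mathfrak{g}) \subseteq \mathfrak{z_g}$.

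To obtain that inclusion I would invoke the structure theory of $\mathfrak{g}$ in separably good characteristic. The key input I would establish (or import) is a decomposition $\mathfrak{g} = \mathfrak{z_g} \oplus [\mathfrak{g},\mathfrak{g}]$ into ideals, together with the fact that the derived subalgebra $[\mathfrak{g},\mathfrak{g}] = \Lie(G_{\mathrm{der}})$ has trivial solvable radical. The separably good hypothesis is exactly what makes this work: it provides a nondegenerate $\Ad$-invariant bilinear form on $\mathfrak{g}$ and controls the central isogeny from the simply connected cover, so that the centre and the derived subalgebra meet only in $0$ and the semisimple part is genuinely radical-free. Granting this, and using that $\rad$ is additive on a direct sum of ideals, I get $\rad(\mathfrak{g}) = \rad(\mathfrak{z_g}) \oplus \rad([\mathfrak{g},\mathfrak{g}]) = \mathfrak{z_g} \oplus 0 = \mathfrak{z_g}$, which closes the chain and proves (1).

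For (2) I would pass to the reductive quotient $\bar H := H/\Rad_U(H)$ and set $\bar{\mathfrak{h}} := \Lie(\bar H)$. Since $\Rad_U(H)$ is smooth and normal, the Lie functor yields a short exact sequence $0 \to \radu(\mathfrak{h}) \to \mathfrak{h} \to \bar{\mathfrak{h}} \to 0$. The ideal $\radu(\mathfrak{h})$ consists of $p$-nilpotent elements and is $p$-nilpotent as an algebra, whence $\radu(\mathfrak{h}) \subseteq \rad_p(\mathfrak{h})$. For the reverse inclusion, the image of $\rad_p(\mathfrak{h})$ in $\bar{\mathfrak{h}}$ is again a $p$-nilpotent $p$-ideal, so it lies in $\rad_p(\bar{\mathfrak{h}})$, and it is enough to show $\rad_p(\bar{\mathfrak{h}}) = 0$. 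Here I apply part (1) to the reductive group $\bar H$: it gives $\rad(\bar{\mathfrak{h}}) = \mathfrak{z}_{\bar{\mathfrak{h}}}$, and the centre $\mathfrak{z}_{\bar{\mathfrak{h}}}$ is toral (carries no nonzero $p$-nilpotent element), again a consequence of separably good $p$. Since $\rad_p(\bar{\mathfrak{h}}) \subseteq \rad(\bar{\mathfrak{h}}) = \mathfrak{z}_{\bar{\mathfrak{h}}}$ and consists of $p$-nilpotent elements, it vanishes; thus $\rad_p(\mathfrak{h}) \subseteq \radu(\mathfrak{h})$ and the two are equal. Finally, the identification of $\rad_p(\mathfrak{h})$ with the set of all $p$-nilpotent elements of $\rad(\mathfrak{h})$ follows because in a solvable restricted $p$-algebra the $p$-nilpotent elements form a $p$-ideal: it is $p$-nilpotent, hence contained in $\rad_p(\mathfrak{h})$, while conversely $\rad_p(\mathfrak{h}) \subseteq \rad(\mathfrak{h})$ consists by definition of $p$-nilpotent elements.

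The main obstacle, and the place where the hypotheses are spent, is the structural input in the second paragraph: proving the direct-sum decomposition, the triviality of the radical of the semisimple part, and the absence of $p$-nilpotent central elements. In very small or non-separably-good characteristics these all fail — the excluded $\SL_p$ (type $A_{p-1}$) situation is precisely the one in which the centre and the derived subalgebra overlap and a $p$-nilpotent element enters the centre — so the argument must isolate and use the separably good hypothesis exactly there, after which the remaining steps are formal.
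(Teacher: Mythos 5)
The paper never proves this lemma: it is stated as a reminder and the proof is entirely delegated to \cite[Lemma 2.1]{VAS} and \cite[Lemmas 2.6 and 2.13]{Jea1}, so there is no in-text argument to compare yours with; your proposal therefore has to stand on its own, and in part (1) it does not. The structural input you rely on --- a decomposition $\mathfrak{g} = \mathfrak{z_g} \oplus [\mathfrak{g},\mathfrak{g}]$ into ideals with $\rad([\mathfrak{g},\mathfrak{g}]) = 0$ --- is simply not available under the separably good hypothesis. Take $G = \SL_p$ in characteristic $p \geq 3$: the derived group is $\SL_p$ itself, which is its own simply connected cover, so the covering morphism is the identity (separable) and $p$ is good for type $A$; hence $p$ is separably good for $G$ by the definition recalled in Section \ref{hypothèses_car}. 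Yet $\mathfrak{z_g} = kI \subseteq \ssl_p = [\mathfrak{g},\mathfrak{g}]$, so the sum is not direct, and $\rad([\mathfrak{g},\mathfrak{g}]) = \rad(\ssl_p) = kI \neq 0$; both of your key claims fail while the conclusion of the lemma still holds there (the only proper nonzero ideal of $\ssl_p$ is $kI$, so $\mathfrak{z_g} = \rad(\mathfrak{g}) = \nil(\mathfrak{g}) = kI$). The justification offered is also wrong on two counts: separably good does not provide a non-degenerate $G$-equivariant form (that is hypothesis (H3) of the standard hypotheses), and by the Seligman result quoted in the paper such a form would force $\rad(\mathfrak{g}) = 0$, which is incompatible with the asserted equality $\rad(\mathfrak{g}) = \mathfrak{z_g}$ whenever the centre is nonzero; moreover you have misplaced where the hypothesis bites, since $\SL_p$ is \emph{not} excluded by separable goodness --- it is the non-separable quotients such as $\PGL_p$ that are.

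Part (2) has the right skeleton --- use the exact sequence $0 \to \Lie(\Rad_U(H)) \to \mathfrak{h} \to \bar{\mathfrak{h}} \to 0$ and kill the image of $\rad_p(\mathfrak{h})$ in the reductive quotient --- and this is close in spirit to the paper's own Lemma \ref{p_rad_inf_sat}. But your closing step invokes the claim that in a solvable restricted $p$-algebra the $p$-nilpotent elements form a $p$-ideal, and this is false in general: $\ssl_2$ in characteristic $2$ is a nilpotent restricted Lie algebra in which $e$ and $f$ are $p$-nilpotent while $(e+f)^{[2]} = I$, so $e+f$ is not; the $p$-nilpotent elements are not even a subspace. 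That the $p$-nilpotent elements of $\rad(\mathfrak{h})$ do form a $p$-nil $p$-ideal under the hypotheses in force is precisely the non-formal content being imported from \cite[Lemma 2.13]{Jea1} (note the explicit characteristic-$2$ caveat in Lemma \ref{p_rad_inf_sat}). Likewise the toralness of $\mathfrak{z}_{\bar{\mathfrak{h}}}$ is asserted rather than proved, and it is essentially equivalent to the vanishing of $\rad_p(\bar{\mathfrak{h}})$ you are trying to establish. In short, the two places where the hypothesis must actually be spent are exactly the places your argument leaves unargued or replaces by false general principles.
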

\end{enumerate}  

The good analogues for both Lie algebras and their nilradical in the characteristic $p>0$ setting turn out to be respectively restricted $p$-Lie algebras and their $p$-radical. These objects come with their own specificities and bring new characteristic related difficulties. For instance, when $k$ is of characteristic $0$, a reductive Lie algebra is the Lie algebra of a reductive group or equivalently, is a Lie algebra that has a trivial nilradical. In characteristic $p>0$ these two properties are no longer equivalent for the analogous objects one needs to consider. This justifies to define the notion of $p$-reductivity use in Statement \ref{cor_morozov}:

\begin{defn}
A restricted $p$-algebra $\mathfrak{h}$ is $p$-reductive if its $p$-radical is trivial.
\label{def_pred}
\end{defn}

\begin{remark}
Let us note that if $k$ is a perfect field and $G$ is a reductive $k$-group such that $p$ is separably good for $G$, the Lie algebra of $G$ is $p$-reductive. Indeed, according to Lemma \ref{reminder_rad} the $p$-radical of $\mathfrak{g}$ is the Lie algebra of the unipotent radical of $G$, thus is trivial.
\end{remark}

Let $\mathfrak{u}$ be a $p$-nil subalgebra of $\mathfrak{g}$. 

One can associate to $\mathfrak{u}$ a tower of $p$-nil subspaces in $\mathfrak{g}$ (see \cite[XVIII, \S 10, Corollaire 2]{BOU}):
\begin{itemize} 
\item set $\mathfrak{u}_0 :=\mathfrak{u}$ and let $\mathfrak{q}_{1} := N_{\mathfrak{g}}(\mathfrak{u}_0)$ be the normaliser of $\mathfrak{u}_0$, 
\item the subspace $\mathfrak{u}_1$ is the $p$-radical of $\rad(\mathfrak{q}_1)$, 
\item the Lie algebra $\mathfrak{q}_2$ is the normaliser $N_{\mathfrak{g}}(\mathfrak{u}_1)$, 
\item and so on: let $\mathfrak{u}_i$ be the $p$-radical of $N_{\mathfrak{g}}(\mathfrak{u}_{i-1})$ and $\mathfrak{q}_{i+1} := N_{\mathfrak{g}}(\mathfrak{u}_i)$.
\end{itemize} 
\noindent This tower stabilises for dimensional reasons. We denote by $\mathfrak{q}_{\infty}$ the limit object of the tower of normalisers, and by $\mathfrak{u}_{\infty}$ the $p$-radical of $\rad(N_{\mathfrak{g}}(\mathfrak{u}_{\infty}))$. Note that in particular one has  $\mathfrak{q}_{\infty} = N_{\mathfrak{g}}(\mathfrak{u}_{\infty})$. 

In characteristic $0$, the $p$-radical is replaced by the nilradical. In this setting, the limit object $\mathfrak{q}_{\infty}$ can be shown to be a parabolic subalgebra of $\mathfrak{g}$. This result is attributed to V. Morozov and can be found for instance in \cite[VIII, \S10 Corollaire 2]{BOU}.

A similar construction is possible at the group level, this time by considering the tower of smooth connected normalisers of a unipotent smooth connected subgroup $U \subset G$. Namely, one needs to consider the smooth connected part of the normaliser at each iteration, rather than the whole normaliser. Once again the tower stabilises. Let us denote by $k^s$ the separable closure of $k$. Assume that for any field $k$ the unipotent subgroup $U$ is $k^s$-embeddable into the unipotent radical of a parabolic $k$-subgroup of $G$ (this condition is for instance always satisfied when the field $k$ is perfect). A theorem of B. Veisfeiler (see \cite{WEI}) and Borel--Tits (see \cite[Corollaire 3.2]{TB}) then states that the limit smooth connected normaliser is a parabolic subgroup of $G$, denoted by $P_G(U)$ (its Lie algebra is denoted by $\mathfrak{p_g}(U)$). 

Finally, the Hilbert--Mumford--Kempf--Rousseau theory (see \cite{K} and \cite[2.3]{M2}) allows to associate to any $p$-nil Lie subalgebra of $\mathfrak{g}$ an optimal parabolic subgroup (thus an optimal parabolic subalgebra), denoted by $P_G(\lambda_{\mathfrak{u}})$ (respectively $\mathfrak{p_g(\lambda_u)}$), where $\lambda_{\mathfrak{u}}$ is an optimal cocharacter for $\mathfrak{u}$. See section \ref{Kempf-Rousseau} for more details and the definition of the objects involved here.

In this article we aim to determine under which conditions on $p$ and $G$ the following statement holds true:

\begin{statement}[Analogous statement of Morozov's Theorem in characteristic $p>0$]
Let $k$ be an algebraically closed field of characteristic $p>0$ and $G$ be a reductive $k$-group. Let $\mathfrak{u} \subseteq \mathfrak{g}$ be a Lie subalgebra. Assume that $\mathfrak{u}$ is the $p$-radical of its normaliser $N_{\mathfrak{g}}(\mathfrak{u})$. Then:
	\begin{enumerate}
		\item the normaliser $N_\mathfrak{g}(\mathfrak{u})$ is a parabolic subalgebra of $\mathfrak{g}$,
		\item this parabolic subalgebra satisfies the following equalities $N_\mathfrak{g}(\mathfrak{u})= \mathfrak{p_g}(\lambda_{\mathfrak{u}})= \mathfrak{p_g}(U)$ 
where $U \subset G$ is a unipotent smooth connected subgroup such that $\Lie(U) = \mathfrak{u}$ (hypotheses on $p$ will in particular ensure the existence of such a subgroup).
					 
	\end{enumerate}
	\label{analogue_morozov}
	\end{statement}
In this paper we in particular determine explicit assumptions on $p$ and $G$ under which this statement turns out to be true. These are detailed in the remaining part of this introduction.

Moreover, in characteristic $0$ a well known corollary of Morozov's Theorem states that any proper maximal subalgebra of the Lie algebra of a reductive group is either reductive or parabolic (see \cite[VIII, \S 10, Corollaire 1]{BOU}). One may thus also wonder whether such an analogue of this corollary still hold true in characteristic $p>0$, namely:

\begin{statement}
Let $k$ be an algebraically closed field of characteristic $p>0$ and $G$ be a reductive $k$-group. Let $\mathfrak{q} \subseteq \mathfrak{g}$ be proper restricted $p$-Lie subalgebra. Then $\mathfrak{q}$ is either parabolic or $p$-reductive.
\label{cor_morozov}
\end{statement}

Note that A. Premet proved this statement to hold true when $G$ is simple and the characteristic is very good for $G$ (see \cite[Theorem 1.1]{P2}). Premet proof uses Weisfeiler's Theorems on graded Lie algebras and proceeds by a case-by-case analysis (disjunctions of cases coming from the classification of semisimple simply connected groups). We will provide a uniform proof of this statement when $p>0$ is separably good for $G$.

Before detailing the organisation of the present paper, let us briefly provide an overview of what has already been done in the literature:
\begin{itemize}
	\item in \cite[Proposition 4.7]{BDP}, V. Balaji, P. Deligne and A. J. Parameswaran show a result for which the first point of Statement \ref{analogue_morozov} is an immediate corollary, when $p>\h(G)$ and $G$ admits a representation which is:
\begin{enumerate}
\item almost faithful, meaning that its kernel is of multiplicative type,
\item of low height. More precisely, let $V$ be such a representation. Let us denote by $\haut_G(V):= \max\{\sum_{\alpha>0}\langle \lambda, \alpha^{\vee}\rangle\}$ the Dynkin height of the representation, where $\lambda$ is a weight for the action of $T$ on $V$. Then $V$ is of low height if $p > \haut_G(V)$. 
\end{enumerate}  

\item When $G$ satisfies the standard hypotheses (see \cite[2.9]{Jantzen2004}, the definition is reminded in section \ref{hypothèses_car}), A. Premet and D. I. Stewart provide a proof of the first point of Statement \ref{analogue_morozov} by means of a case-by-case study that relies on the classification of semisimple groups (see \cite[Corollary 1.4]{PS}). Note that these hypotheses in particular imply the semisimplicity of $\mathfrak{g}$. This explains why the authors consider the nilradical rather than the $p$-radical of the Lie algebra in their statements. Let us also note that when $\mathfrak{g}$ is semisimple these objects coincide, see for example Remark \ref{semisimple_rad_p_nil} in Section \ref{radicaux_p_alg_restreinte} below. 
\end{itemize}

Our assumptions on $G$ and $p$ (we explicit below) approach, in the most refined version of the analogue statement of Morozov's Theorem the level of generalities of \cite{PS}. Proofs proposed here are uniform and allow to characterise the parabolic subgroup obtained here by means of Geometric Invariant Theory. 

The article is organised as follows: after a quick reminder of some prerequisites Section \ref{Prerequisites}, we will study high characteristics in section \ref{sections_premiers_resultats}. When $p$ is big enough, obtaining an analogue of Morozov's theorem as stated above makes no difficulties but allows to better understand issues that are specific to the characteristic $p>0$ framework. More precisely:
\begin{enumerate}
\item in characteristic $p>0$, the existence of an integration of any restricted $p$-nil Lie subalgebra $\mathfrak{u} \subset \mathfrak{g}$ is not always satisfied. This means that, given a restricted $p$-nil Lie subalgebra $\mathfrak{u} \subset \mathfrak{g}$, one cannot always associate to $U$ a unipotent smooth connected subgroup $U \subset G$ such that $\Lie(U) = \mathfrak{u}$. Note that in characteristic $0$ the exponential map always makes possible an integration process. Furthermore, when such an integration exists, nothing ensures a priori that it will be compatible with the adjoint representation. This last condition would ensure the equality of the normalisers $N_G(U)$ and $N_G(\mathfrak{u})$. These two issues have been studied in a previous article (see \cite{JEA1}). They underline the existence of a huge gap between:
\begin{itemize}
\item the characteristic $p>\h(G)$ case, for which an integration (given by the truncated exponential) is guaranteed in any case, even if it does not satisfy all the properties of the integration process in characteristic $0$. In particular this integration is no longer compatible with the adjoint representation. This explains the condition on the existence of a certain kind of representation for $G$ required in \cite{BDP}. We will show that this is actually superfluous. When $G$ is simple and simply connected this leads to widen the range of admissible characteristics, without any additional assumptions on $G$. In particular we have some gains for types $F_4$, $E_6$, $E_7$ and $E_8$ (see \cite[p.14]{BDP}). 
\item The case of characteristics $p\leq \h(G)$ that are separably good for $G$. In this last framework only a punctual integration is guaranteed in any case. This is ensured by the existence of Springer isomorphisms $\phi : \mathcal{N}_{\red}(\mathfrak{g}) \rightarrow \mathcal{V}_{\red}(G)$, that are $G$-equivariant isomorphisms of reduced varieties, where $\mathcal{N}_{\red}(\mathfrak{g})$ stands for the reduced nilpotent $k$-scheme of $\mathfrak{g}$ and $\mathcal{V}_{\red}(G)$ is the reduced unipotent $k$-scheme of $G$. Such an isomorphism always exists under our assumptions as explained in \cite[Section 3.1]{JEA1}. Nevertheless when $\mathfrak{u}$ is such as in Statement \ref{analogue_morozov}, it is integrable into a unipotent smooth connected subgroup (as shown in \cite[Lemma 5.1]{JEA1}, see also \cite[Lemma 3.5]{JEA1} for the equality of normalisers);
\end{itemize} 
\item when $p>0$, given a subalgebra $\mathfrak{h} \subseteq \mathfrak{g}$, the normaliser $N_G(\mathfrak{h})$ is no longer smooth in general. In characteristic $0$ this smoothness condition for normalisers is automatically satisfied. This is a consequence of a theorem of P. Cartier (see for example \cite[II, \S 6, n\degree 1.1]{DG}). In characteristic $p>0$, counter-examples can be found in \cite[lemma 11]{HS}.
\end{enumerate}

\noindent Even if there are some similarities, we treat separately the case of characteristics $p>\h(G)$ (that is explained in section \ref{chapitre_morozov_p_sup}) and the one of separably good characteristics for $G$ (which is studied in section \ref{section_morozov_p_inf}). Let us stress out that the smoothness of normalisers of subsets of $\mathfrak{g}$ for the action of $G$ is a crucial issue in this article. This is explained with the following theorem, showed in section \ref{Kempf-Rousseau}. In the same section we also remind of the reader how Hilbert--Mumford--Kempf--Rousseau theory allows to associate to any $p$-nil subalgebra $\mathfrak{u} \subset \mathfrak{g}=\Lie(G)$ a destabilising parabolic subgroup $P_G(\lambda_{\mathfrak{u}})$, under suitable conditions on the characteristic $p>0$.

\begin{theorem}
Let $k$ be an algebraically closed field of characteristic $p>0$. Le $G$ be a reductive $k$-group and $\mathfrak{u} \subseteq \mathfrak{g}$ be a restricted $p$-Lie subalgebra such that $\mathfrak{u}$ is the set of $p$-nilpotent elements of $\rad(N_{\mathfrak{g}}(\mathfrak{u}))$. Assume that $p$ is not of torsion for $G$. Let also $P_G(\lambda_{\mathfrak{u}})$ be the optimal destabilizing parabolic subgroup of $\mathfrak{u}$ defined by the Hilbert--Mumford--Kempf--Rousseau method (see \cite[Theorem 3.4]{K} for the existence of such a subgroup). The following assertions are equivalent:
	\begin{enumerate}
		\item the algebra $N_{\mathfrak{g}}(\mathfrak{u})$ is the Lie algebra of the parabolic subgroup $P_G(\lambda_{\mathfrak{u}})$;
		\item the connected component $N_{G}(\mathfrak{u})^{0}$ is smooth;
		\item this connected component satisfies the equality $N_G(\mathfrak{u})^0 = P_G(\lambda_{\mathfrak{u}})$.
	\end{enumerate}
	\label{obstruction}
\end{theorem}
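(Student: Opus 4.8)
The plan is to prove the cyclic chain of implications $(1)\Rightarrow(2)\Rightarrow(3)\Rightarrow(1)$, using the parabolic subgroup $P_G(\lambda_{\mathfrak{u}})$ produced by the Kempf--Rousseau optimality theory as the fixed comparison object throughout. The key structural fact I would exploit is that the optimal cocharacter $\lambda_{\mathfrak{u}}$ normalises $\mathfrak{u}$, so that $P_G(\lambda_{\mathfrak{u}}) \subseteq N_G(\mathfrak{u})$ always holds at the group level; dually, passing to Lie algebras, $\Lie(P_G(\lambda_{\mathfrak{u}})) = \mathfrak{p_g}(\lambda_{\mathfrak{u}}) \subseteq N_{\mathfrak{g}}(\mathfrak{u})$. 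This one-sided containment is the backbone, and each implication will amount to upgrading it to an equality under the relevant hypothesis.

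For $(1)\Rightarrow(2)$, suppose $N_{\mathfrak{g}}(\mathfrak{u}) = \Lie(P_G(\lambda_{\mathfrak{u}}))$. Since $P_G(\lambda_{\mathfrak{u}})$ is parabolic, hence smooth, its Lie algebra has dimension equal to $\dim P_G(\lambda_{\mathfrak{u}})$. First I would show that $P_G(\lambda_{\mathfrak{u}})^0 \subseteq N_G(\mathfrak{u})^0$, so that
\[
\dim N_G(\mathfrak{u})^0 \geq \dim P_G(\lambda_{\mathfrak{u}}) = \dim \Lie(P_G(\lambda_{\mathfrak{u}})) = \dim N_{\mathfrak{g}}(\mathfrak{u}).
\]
On the other hand, the general inequality $\dim N_G(\mathfrak{u})^0 \leq \dim \Lie(N_G(\mathfrak{u})) = \dim N_{\mathfrak{g}}(\mathfrak{u})$ always holds because $\Lie(N_G(\mathfrak{u})) \subseteq N_{\mathfrak{g}}(\mathfrak{u})$ (the scheme-theoretic normaliser of $\mathfrak{u}$ in $G$ has Lie algebra contained in the Lie-theoretic normaliser). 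Combining the two bounds forces equality of dimensions, which is exactly the statement that $N_G(\mathfrak{u})^0$ is smooth. The implication $(2)\Rightarrow(3)$ is then where smoothness does its real work: once $N_G(\mathfrak{u})^0$ is smooth we have $\dim N_G(\mathfrak{u})^0 = \dim N_{\mathfrak{g}}(\mathfrak{u})$, and since $P_G(\lambda_{\mathfrak{u}}) \subseteq N_G(\mathfrak{u})^0$ with $P_G(\lambda_{\mathfrak{u}})$ connected, a dimension count combined with the fact that a parabolic subgroup is its own identity component and is closed gives $N_G(\mathfrak{u})^0 = P_G(\lambda_{\mathfrak{u}})$; here I would use that $\dim P_G(\lambda_{\mathfrak{u}}) = \dim\Lie(P_G(\lambda_{\mathfrak{u}})) \geq \dim\Lie(N_G(\mathfrak{u})^0) = \dim N_G(\mathfrak{u})^0$ via the reverse containment at the Lie algebra level, pinning down equality.

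Finally, for $(3)\Rightarrow(1)$, if $N_G(\mathfrak{u})^0 = P_G(\lambda_{\mathfrak{u}})$ then taking Lie algebras gives $\Lie(N_G(\mathfrak{u})^0) = \Lie(P_G(\lambda_{\mathfrak{u}})) = \mathfrak{p_g}(\lambda_{\mathfrak{u}})$. To conclude $N_{\mathfrak{g}}(\mathfrak{u}) = \mathfrak{p_g}(\lambda_{\mathfrak{u}})$ I would sandwich: on one side $\mathfrak{p_g}(\lambda_{\mathfrak{u}}) = \Lie(N_G(\mathfrak{u})^0) \subseteq N_{\mathfrak{g}}(\mathfrak{u})$, and on the other side I need the reverse inclusion $N_{\mathfrak{g}}(\mathfrak{u}) \subseteq \mathfrak{p_g}(\lambda_{\mathfrak{u}})$, which is precisely the optimality property built into the Kempf--Rousseau construction together with the hypothesis that $\mathfrak{u}$ is the set of $p$-nilpotent elements of $\rad(N_{\mathfrak{g}}(\mathfrak{u}))$. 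The non-torsion hypothesis on $p$ enters here to guarantee that $\lambda_{\mathfrak{u}}$ is canonically associated to $\mathfrak{u}$ and that the destabilising parabolic it defines contains the full Lie-theoretic normaliser.

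I expect the main obstacle to be the reverse inclusion $N_{\mathfrak{g}}(\mathfrak{u}) \subseteq \mathfrak{p_g}(\lambda_{\mathfrak{u}})$, i.e. showing that \emph{every} element normalising $\mathfrak{u}$ lies in the optimal parabolic rather than merely that the optimal parabolic normalises $\mathfrak{u}$. The one-sided containment $\mathfrak{p_g}(\lambda_{\mathfrak{u}}) \subseteq N_{\mathfrak{g}}(\mathfrak{u})$ is formal, but controlling the opposite direction requires genuinely using the extremal characterisation of $\lambda_{\mathfrak{u}}$ and the self-normalising behaviour encoded in the hypothesis on $\mathfrak{u}$; this is the step where the non-torsion condition on $p$ is indispensable and where the argument is least formal.
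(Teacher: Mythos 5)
Your cyclic scheme is reasonable, but the containment you take as the ``backbone'' --- that $P_G(\lambda_{\mathfrak{u}}) \subseteq N_G(\mathfrak{u})$ holds unconditionally because the optimal cocharacter normalises $\mathfrak{u}$ --- is a genuine gap, and it is where the actual content of the theorem lives. The Kempf--Rousseau construction only guarantees that $\mathfrak{u}$ sits inside $\mathfrak{u_g(\lambda_u)}$, the Lie algebra of the unipotent radical of $P_G(\lambda_{\mathfrak{u}})$ (Lemma \ref{unip_parab}); it does not make the subspace $\mathfrak{u}$ stable under $\Ad$ of the Levi part of $P_G(\lambda_{\mathfrak{u}})$, nor even under $\lambda_{\mathfrak{u}}(\mathbb{G}_m)$ (positivity of the weights occurring in a basis of $\mathfrak{u}$ does not make the span of that basis $\lambda$-stable). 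In the paper the inclusion $P_G(\lambda_{\mathfrak{u}}) \subseteq N_G(\mathfrak{u})^0$ is only obtained inside the implication $1.\Rightarrow 3.$, after first proving $\mathfrak{u} = \mathfrak{u_g(\lambda_u)}$ from hypothesis 1.\ combined with the standing assumption on $\mathfrak{u}$ (via $\mathfrak{u_g(\lambda_u)} = \rad_p(\mathfrak{p_g(\lambda_u)}) = \rad_p(N_{\mathfrak{g}}(\mathfrak{u})) \subseteq \mathfrak{u}$); only then does $P_G(\lambda_{\mathfrak{u}}) = N_G(U_G(\lambda_{\mathfrak{u}}))^0$ normalise $\mathfrak{u}$. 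Without this, your dimension counts in $(1)\Rightarrow(2)$ and $(2)\Rightarrow(3)$ have no lower bound to start from.

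The second problem is the location and content of the ``hard'' reverse inclusion $N_{\mathfrak{g}}(\mathfrak{u}) \subseteq \mathfrak{p_g}(\lambda_{\mathfrak{u}})$. You place it in $(3)\Rightarrow(1)$, but there it is not needed: if $N_G(\mathfrak{u})^0 = P_G(\lambda_{\mathfrak{u}})$ then $N_{\mathfrak{g}}(\mathfrak{u}) = \Lie(N_G(\mathfrak{u})) = \Lie(N_G(\mathfrak{u})^0) = \mathfrak{p_g}(\lambda_{\mathfrak{u}})$ immediately, since the Lie algebra of the scheme-theoretic normaliser is exactly $N_{\mathfrak{g}}(\mathfrak{u})$ (Lemma \ref{reminder_normaliser}). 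Where the inclusion is genuinely needed is in deducing 1.\ from smoothness (the paper's $2.\Rightarrow 1.$), and ``the optimality property built into the construction'' is not by itself an argument. The paper's mechanism is: smoothness gives Zariski density of $N_G(\mathfrak{u})^0(k)$; for each $k$-point $h$ one has $\Ad(h)P_G(\lambda_{\mathfrak{u}}) = P_{S,\Ad(h)x}$ and $\Delta_{S,x} = \Delta_{S,\Ad(h)x}$ because $h$ preserves the data $\left(\lvert X,x\rvert, \alpha_{S,x}\right)$; hence $N_G(\mathfrak{u})^0$ normalises the self-normalising group $P_G(\lambda_{\mathfrak{u}})$ and is contained in it. One then still needs Lemma \ref{norm_parab} (again using that $\mathfrak{u}$ is the set of $p$-nilpotent elements of $\rad(N_{\mathfrak{g}}(\mathfrak{u}))$) to turn the resulting Lie-algebra inclusion into the equality of assertion 1. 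Your proposal names this obstacle but supplies neither of these two steps, so as written the argument does not close.
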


A tool to measure the lack of smoothness of infinitesimally saturated subgroups (see Definition \ref{def_sat_inf} below, normalisers considered in Statement \ref{analogue_morozov} satisfy this property) is provided by previous works of P. Deligne (see \cite[Théorème 2.7]{D1}, stated when $G = \GL(V)$ under some extra assumptions on $V$ and generalised in \cite[Theorem 2.5]{BDP} to any reductive group $G$ when $p>\h(G)$). In particular, when $p>\h(G)$ this theorem leads to the conclusion that if $\mathfrak{u} \subset \mathfrak{g}$ is a restricted $p$-nil $p$-Lie subalgebra, then $N_G(\mathfrak{u})^{0}_{\red}$ differs from $N_G(\mathfrak{u})$ only by a subgroup of multiplicative type. This comes from the fact that, in this context, the normaliser of any such $\mathfrak{u}$ in $G$ is infinitesimally saturated. As a reminder, the following notion of infinitesimal saturation generalises that of saturation introduced in \cite[\S 4]{SER1}. It has first been introduced by P. Deligne (see \cite[Définition 1.5]{D1}). Let us recall that, when $p>\h(G)$ the exponential map is well defined.

\begin{defn}

A subgroup $H \subseteq G$ is infinitesimally saturated if for any $p$-nilpotent element $x \in \mathfrak{h}:= \Lie(H)$ the $t$-power map
	\begin{alignat*}{3}
		\exp_x : \: & \mathbb{G}_a \: & \rightarrow \: & G,\\
			\:& t \: & \mapsto \: & \exp(tx),
	\end{alignat*} 
\noindent factors through $H$. In other words the situation is the following:
\begin{figure}[H]
\begin{center}
\[\begin{tikzpicture} 

 \matrix (m) [matrix of math nodes,row sep=2em,column sep=1.8em,minimum width=2em,  text height = 1.5ex, text depth = 0.25ex]
  {
    \mathbb{G}_a & G,\\
    H. & \\};
  \path[-stealth]
  	(m-1-1) edge
  			node [above] {$\exp_{x}(\cdot)$} (m-1-2)
    (m-1-1) edge[dashed] 
    		node [left] {$\exists$} (m-2-1) 
    (m-2-1) edge (m-1-2) ;
    	
\end{tikzpicture}\]
\end{center}
\end{figure}
\label{def_sat_inf}
\end{defn} 
We show in Section \ref{saturation_inf} that normalisers of a $p$-nil Lie subalgebra are infinitesimally saturated when the adjoint representation is of low height. This still holds true under the assumption $p>\h(G)$ (which is slightly less restrictive). This is a crucial key to prove, in Section \ref{chapitre_morozov_p_sup}, that analogues of Morozov's theorem (see Statement \ref{analogue_morozov}) hold true when $p>\h(G)$.

Section \ref{section_morozov_p_inf} is dedicated to prove the existence of analogues of Morozov's Theorem in separably good characteristics. The notion of infinitesimal saturation is replaced here by that of $\phi$-infinitesimal saturation (see \cite[Definition 4.1]{JEA1}), for $\phi : \mathcal{N}_{\red}(\mathfrak{g}) \rightarrow \mathcal{V}_{\red}(G)$ a Springer isomorphism. This allows us to define:
\begin{defn}
Let $\phi : \mathcal{N}_{\red}(\mathfrak{g}) \rightarrow \mathcal{V}_{\red}(G)$ be a Springer isomorphism for $G$. A subgroup $H \subseteq G$ is $\phi$-infinitesimally saturated if for any $p$-nilpotent element $x \in \mathfrak{h}$ the $t$-power map:
	\begin{alignat*}{3}
		\phi_x : \: & \mathbb{G}_a \: & \rightarrow \: & G,\\
			\:& t \: & \mapsto \: & \phi(tx),
	\end{alignat*} 
factors through $H$. In other words the situation is the following:
\begin{figure}[H]
\begin{center}
\[\begin{tikzpicture} 

 \matrix (m) [matrix of math nodes,row sep=2em,column sep=1.8em,minimum width=2em,  text height = 1.5ex, text depth = 0.25ex]
  {
    \mathbb{G}_a & G,\\
    H. & \\};
  \path[-stealth]
  	(m-1-1) edge
  			node [above] {$\phi_x$} (m-1-2)
    (m-1-1) edge[dashed] 
    		node [left] {$\exists$} (m-2-1) 
    (m-2-1) edge (m-1-2) ;
    	
\end{tikzpicture}\]
\end{center}
\end{figure}
\end{defn}

This notion leads to a natural extension of P. Deligne Theorem for infinitesimally saturated subgroups (see \cite[Theorem 1.1]{JEA1}) and still allow to measure the lack of smoothness of normalisers. Unfortunately it is not possible to adapt the reasoning provided in characteristic $p>\h(G)$ to show that $N_{\mathfrak{g}}(\mathfrak{u})$ is $\phi$-infinitesimally saturated, which justifies the additional assumption of $\phi$-infinitesimal saturation of the group normaliser in the statement of Theorem \ref{morozov_p_inf} below.

Analogues of Morozov's Theorem in characteristic $p>0$ are in particular useful to extend in characteristic $p>0$ (under some assumptions on $p$) Atiyah-Bott's construction of the canonical parabolic subgroup of twisted form of a constant reductive group over a Riemannian variety. This is detailed in the last section of this article (see Section \ref{parab_canonique}). This is a practical application of Theorem \ref{Morozov_p_sup} (or more precisely of one of its reformulation). Let us briefly expose the context: let $k$ be a field and $C$ be a projective smooth and geometrically connected $k$-curve. Let also $G$ be a reductive $C$-group which is the twisted form of a constant reductive $C$-group $G_0$ (namely $G_0$ is obtained from a reductive $k$-group by base change). In other words there is a $G_0$-torsor $E$ for which $G= \prescript{E}{}{G_0}$. When $k$ is of characteristic $0$, M. Atiyah and R. Bott consider the Harder--Narasimhan filtration of the Lie algebra $\mathfrak{g}$ (seen as a vector bundle over $C$):
\[0 \subsetneqq E_{-r} \subsetneqq \hdots \subsetneqq E_{-1} \subsetneqq E_{0} \subsetneqq E_{1} \subsetneqq \hdots \subsetneqq E_{l} = \mathfrak{g},\] 
where the indices are chosen in such a way that the term $E_0$ is the one for which the semistable quotient is of slope $0$. The authors show in \cite[\S 10]{AB} that $E_0$ is a parabolic subalgebra of $\mathfrak{g}$ and they call ``canonical parabolic subgroup of $G$'' the subgroup from which it derives. If now $k$ is any field and $G$ is a reductive $C$-group, K. A. Behrend extends in \cite{BEH} the notion of canonical parabolic subgroup when $k$ is any field. Its definition does no longer involves the Harder--Narasimhan filtration of $\mathfrak{g}$. In \cite[Proposition 3.4]{MS}, V. B. Mehta and S. Subramanian show that the definition given by M. Atiyah and R. Bott (in characteristic $0$) still makes sense in the aforementioned framework when $k$ is of characteristic $p> \max(2 \dim(G), 4(h(G)-1))$\footnote{The original result is stated with the height of $G$ which is equal to the Coxeter number of $G$ minus $1$.}. See also \cite[Theorem 2.6]{Meh} for a refinement of the statement, that only requires $p>2 \dim(G)$. Their proof relies on techniques coming from representation theory. Statement \ref{analogue_morozov} is true when $p>\h(G)$ (see Theorem \ref{Morozov_p_sup}) and allows to obtain a construction of the canonical parabolic subgroup of $G$ when $p> 2\dim(G)-2$ that mimics exactly Atiyah--Bott's construction in characteristic $0$ (see \cite{AB}), and to compare this construction with Behrend's definition of the canonical parabolic subgroup. In section \ref{parab_canonique} we make use of Corollary \ref{cor_killing} to show:

\begin{proposition}
If $k$ is of characteristic $0$ or $p > 2 \dim G - 2$ and if $G$ is endowed with a non degenerate $G$-equivariant symmetric bilinear form then: 
\begin{enumerate}
\item the subbundle $E_0 \subseteq \mathfrak{g}$ is the Lie algebra of a parabolic subgroup $Q \subseteq G$ and $E_{-1}$ is the Lie algebra of its unipotent radical (denoted by $\radu(Q)$), 
\item this parabolic subalgebra satisfies the equalities $E_0 = \mathfrak{p_g^{\can}}$ and $E_{-1} = \radu(P_G^{\can})$, where $P_G^{\can}$ is the canonical parabolic subgroup of $G$ and $\mathfrak{p_g^{\can}}$ is its Lie algebra. Let $K$ be the function field of $C$ and $\bar{K}$ its algebraic closure. Over the geometric generic fiber we have that $(E_{0})_{\bar{K}} = \mathfrak{p_g}(\lambda_{{(E_{-1})}_{\bar{K}}}),$
\item the subbundles $E_{i}$ are $p$-nil $p$-ideal of $E_0 = \mathfrak{p_g^{\can}}$ for any $-r \leq i < 0$.
\end{enumerate}
\label{E0_canonique}
\end{proposition}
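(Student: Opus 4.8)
The plan is to deduce the parabolicity of $E_0$ from Corollary \ref{cor_killing}, by exhibiting $E_0$ over the geometric generic fibre as a subalgebra whose orthogonal with respect to $\kappa$ is nilpotent. First I would pass to an algebraically closed field: let $K$ be the function field of $C$ and $\bar K$ its algebraic closure. The Harder--Narasimhan filtration $0 \subsetneqq E_{-r} \subsetneqq \cdots \subsetneqq E_l = \mathfrak g$ is canonical, hence defined over $C$ and compatible with passage to the generic point; restricting and extending scalars to $\bar K$ yields a chain of subalgebras $(E_i)_{\bar K} \subseteq \mathfrak{g}_{\bar K}$. The whole point is to check that $\mathfrak p := (E_0)_{\bar K}$ and its orthogonal $\mathfrak p^{\perp} = (E_{-1})_{\bar K}$ satisfy the hypotheses of Corollary \ref{cor_killing}; note that $p > 2\dim G - 2$ forces $p > \h(G)$ (the dimension dominates the Coxeter number), so that corollary is indeed available in characteristic $p$, and it applies trivially in characteristic $0$.

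The two structural inputs I need are that the HN filtration is \emph{multiplicative} and \emph{self-dual}. For multiplicativity, i.e.\ $[E_i, E_j] \subseteq E_{i+j}$, I would argue as in characteristic $0$: the Lie bracket is a $G$-equivariant bilinear map, hence a morphism of the associated vector bundles $\mathfrak g \otimes \mathfrak g \to \mathfrak g$, and a morphism from a semistable bundle to one of strictly smaller slope must vanish. \emph{The delicate point, and precisely where the hypothesis $p > 2\dim G - 2$ enters, is that in positive characteristic the tensor product of semistable bundles need not be semistable}; the bound is what guarantees that the graded pieces are strongly semistable, so that the required slope comparison, hence the $\Hom$-vanishing, still holds. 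For self-duality I would use that $\kappa$ furnishes a $G$-equivariant isomorphism $\mathfrak g \xrightarrow{\sim} \mathfrak g^{\vee}$, hence an isomorphism of bundles carrying the HN filtration of $\mathfrak g$ onto the dual filtration of $\mathfrak g^{\vee}$; comparing ranks and slopes, and using that $E_0/E_{-1}$ is the slope-$0$ graded piece, yields $E_i^{\perp} = E_{-i-1}$, in particular $E_0^{\perp} = E_{-1}$. Multiplicativity then makes $E_{-1}$ nilpotent, since bracketing strictly decreases the index and the lower central series of $E_{-1}$ lands successively in $E_{-2}, E_{-3}, \dots$, terminating at $E_{-r-1} = 0$.

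With these facts in hand, Corollary \ref{cor_killing} applies to $\mathfrak p = (E_0)_{\bar K}$ and gives at once that $(E_0)_{\bar K}$ is parabolic and equals $\mathfrak{p_g}(\lambda_{(E_{-1})_{\bar K}}) = \Lie(P_G(\exp((E_{-1})_{\bar K})))$, which is the geometric generic fibre assertion of point (2). To globalise and obtain point (1) I would invoke the properness of the scheme of parabolic subgroups of $G$ over $C$: the parabolic reduction just constructed descends to $K$ (it is the reduction read off the canonical HN flag, which is defined over $K$) and then extends over the smooth curve $C$ by the valuative criterion, producing a parabolic $C$-subgroup $Q \subseteq G$ with $\Lie(Q) = E_0$ and $\Lie(\radu(Q)) = E_0^{\perp} = E_{-1}$. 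Identifying this reduction with the one characterising the canonical parabolic in the sense of Atiyah--Bott and Behrend --- both being determined by the slope-$0$ term of the HN filtration of $\mathfrak g$ --- then yields $E_0 = \mathfrak{p_g^{\can}}$ and $E_{-1} = \radu(P_G^{\can})$.

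Finally, for point (3) I would use multiplicativity once more: for $-r \le i < 0$ one has $[E_0, E_i] \subseteq E_{0+i} = E_i$, so each $E_i$ is an ideal of $E_0 = \mathfrak{p_g^{\can}}$; since $E_i \subseteq E_{-1}$ and $E_{-1}$ is nilpotent, every element of $E_i$ is $p$-nilpotent (in characteristic $p > \h(G)$ a nilpotent element satisfies $x^{[p]} = 0$), so $E_i$ is $p$-nil. Stability under the $p$-operation follows from the grading underlying the filtration: each $E_i$ ($i<0$) is a sum of weight spaces of strictly positive weight for the cocharacter defining $Q$, and since $\ad(x^{[p]}) = \ad(x)^p$ the $p$-power map multiplies weights by $p$, so $x^{[p]}$ has weights no smaller than those of $x$ and hence remains in $E_i$; thus each $E_i$ is a $p$-ideal of $E_0$. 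The main obstacle is the second paragraph, namely rigorously securing the multiplicativity and self-duality of the HN filtration in characteristic $p$, where the failure of tensor products of semistable bundles to remain semistable is exactly what the hypothesis $p > 2\dim G - 2$ is there to repair.
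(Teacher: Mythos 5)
Your overall route coincides with the paper's: tensor-product semistability (valid under $p>2\dim G-2$) to kill the relevant brackets, self-duality of the Harder--Narasimhan filtration via $\kappa$ to get $E_{-1}=E_0^{\perp}$, Corollary \ref{cor_killing} on the geometric generic fibre, and projectivity of the scheme of parabolics to spread the parabolic subgroup out over $C$. But there is one genuine gap: your identification of the resulting parabolic $Q$ with $P_G^{\can}$. You assert that both $Q$ and the canonical parabolic are ``determined by the slope-$0$ term of the HN filtration of $\mathfrak g$''; this is false for Behrend's canonical parabolic, whose definition (recalled in section \ref{parab_canonique}) makes no reference to the HN filtration of $\mathfrak g$ --- it is the maximal parabolic subgroup whose degree equals the degree of instability of $G$. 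The coincidence of the two is precisely the content of point (2), so your argument is circular at this step. The paper closes it with Lemma \ref{p_dans_E0} (the inclusion $\mathfrak{p_g^{\can}}\subseteq E_0$, obtained from semistability of $\mathfrak{p_g^{\can}}/\radu(P_G^{\can})$ under the low-height hypothesis and a $\mu_{\min}$/$\mu_{\max}$ comparison), followed by a degree computation ($\deg(\mathfrak{p_g^{\can}})\leq\deg(E_{-1})=\deg(\mathfrak q)$ using $E_{-1}\cong E_0^{\perp}$ and $\deg\mathfrak g=0$) and Behrend's maximality characterisation to force $P_G^{\can}=Q$. Without some version of this you have produced a parabolic subgroup with Lie algebra $E_0$ but not identified it with the canonical one.

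Two smaller points. First, your blanket claim of multiplicativity $[E_i,E_j]\subseteq E_{i+j}$ does not follow from the slope comparison alone (the slopes of the graded pieces are only strictly decreasing, not additive in the index); the paper proves only the instances actually needed, namely $[E_0,E_0]\subseteq E_0$, $[E_{-1},E_{-j}]\subseteq E_{-j-1}$ and $[E_0,E_{-i}]\subseteq E_{-i}$, each of which works because one factor has $\mu_{\min}\geq 0$ (resp.\ $>0$). Since these suffice for points (1) and (3), this is an overstatement rather than an error. Second, the equality $\Lie(Q)=E_0$ over all of $C$ (not merely generically) needs an argument --- the paper uses that two locally-direct-factor $\mathcal O_C$-submodules of $\mathfrak g$ agreeing at the generic point agree everywhere, via projectivity of the Grassmannian --- and the descent of the parabolic from $\bar K$ to $K$ is effected through the smooth normaliser $N_{G_K}((E_0)_K)^0$ (smoothness coming from the type $(RA)$ hypothesis), which you leave implicit. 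For point (3), your weight-space argument for $p$-stability is unnecessary: as you note, $x^{[p]}=0$ for nilpotent $x$ when $p>\h(G)$, and the paper instead invokes Remark \ref{semisimple_rad_p_nil} (faithfulness of $\ad$ under the non-degeneracy of $\kappa$) to equate $\ad$-nilpotency with $p$-nilpotency.
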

Let us finally mention two other works that provide better bounds for the coincidence of both approaches:
\begin{itemize}
\item A. Langer shows that the definitions of canonical parabolic subgroup are the same when the $G_0$-torsor $E$ admits a strong Harder--Narasimhan filtration (see \cite[Definition 3.1 and Proposition 3.3]{LAN}),
\item I. Biswas and Y. I. Holla show in \cite{BH} that the approaches coincide when $p>\h(G)$.
\end{itemize}  
Some results exposed here come from the author's Ph.D. manuscript (see \cite{JEAthese}).

\section{Prerequisites}
\label{Prerequisites}
\subsection{Context}
	\label{hypothèses_car}
	
In this section and unless otherwise stated $k$ is a field of characteristic $p>0$ and $G$ is a reductive $k$-group. Let $T\subset G$ be a maximal torus, denote by $W= W(G,T)$ the Weyl group and $\Phi=\Phi(G,T)$ the associated root system. Let then $\Phi^{+}$ be a positive root system and $\Delta:= \{\alpha_1, \cdots, \alpha_n \}$ be the corresponding root basis that defines the corresponding Borel subgroup $T \subset B \subset G$.

Assume for the moment that $\Phi$ is irreducible. Let $\alpha := \sum_{i=1}^{n}a_i\alpha_i$ be its highest root. The Coxeter number of $G$, denoted by $\h(G)$, is the height of $\alpha$ plus $1$, whence the equality $\h(G) = \sum_{i=1}^{n} a_i +1$.

For any root $\beta \in \Phi$, denote by $\beta^{\vee}$ the corresponding coroot. Let $\alpha^{\vee}:= \sum_{i=1}^{n} b_i \alpha_i^{\vee}$ be the coroot associated to the highest root. The characteristic $p$ of $k$ is:
	\begin{itemize}
		\item \textit{of torsion} for $\Phi$ if there exists $i \in [1,\cdots,n]$ such that $p$ divides $b_i$;
		\item \textit{bad} for $\Phi$ if there exists an integer $i \in [1, \cdots, n]$ such that $p$ divides $a_i$;
		\item \textit{good} for $\Phi$ if its not bad for $\Phi$;
		\item \textit{very good} for $\Phi$ if it is good and if $p$ does not divide $n+1$ when $\Phi$ is of type $A_n$.
	\end{itemize}

\noindent For a more detailed discussion on torsion integers for $G$ see for example \cite{Stei_prime}, good characteristics are for instance studied in \cite[\S 0.3]{SPR}. The values of torsion, good and very good integers are listed in the table below, the rank of the algebraic $k$-group $G$ is also mentioned. The latter, denoted by $\rg(G)$, is equal to the dimension of a maximal torus of $G$ (which is the same for any maximal torus of $G$, see \cite[XIX 1.5]{SGA33}).	
	
\begin{table}[H]
\begin{center}
    \begin{tabular}{ |c| P{2.8cm}| P{2.8cm} |P{2.8cm} | c | c | }
    \hline
    Type of $G$ & Torsion integers & Good characteristics & Very good characteristics & $\rg(G)$ & $\h(G)$ \\ \hline
    $A_n$ & $1$ & any & $p \nmid n+1$ & $n$ & $n$ \\ \hline 
    $B_n$, with $n\geq 2$ & $2$ & $>2$ & $>2$ &  $n$ & $2n$ \\ \hline
    $C_n$, with $n \geq 2$ & $1$ & $>2$ &$>2$ & $n$ & $2n$ \\ \hline
    $D_n$, with $n \geq 3$ & $2$ & $>3$ &$>3$ & $n$ & $2(n-1)$ \\ \hline
    $E_6$ & $3$ & $>3$ &$>3$ & $6$ & $12$ \\ \hline
    $E_7$ & $4$ & $>3$ & $>3$ & $7$ & $18$ \\ \hline
    $E_8$ & $6$ & $>5$ & $>5$ & $8$ & $30$ \\ \hline
    $F_4$ & $3$ & $>3$ & $>3$ & $4$ & $12$ \\ \hline
    $G_2$ & $2$ & $>3$ & $>3$ & $2$ & $6$ \\ \hline

    \end{tabular}
\end{center}
 \caption[Table caption text]{Assumptions on the characteristic for a simple group.}	
 \label{Hypothèses_sur_la caractéristiqu_ pour_un_groupe_simple}
	\end{table}

\noindent Note that when the characteristic is very good for $G$ the Lie algebra of $G$ is endowed with a non degenerate Killing form (see for example \cite[I, 5.3]{SprSte}, \cite[1.16]{CAR}). 

When $G$ is any reductive group its root system $\phi$ is no longer irreducible. In this setting the Coxeter number of $G$ is the highest Coxeter number of the irreducible components of $\phi$. The characteristic $p$ of $k$ is:
	\begin{itemize}
		\item \textit{of torsion} if $p$ is of torsion for one irreducible component of $\Phi$ or if $p$ divides the order of the fundamental group of $G$; 
		\item \textit{bad} for $G$ if it is bad for one irreducible component of $\Phi$; 
		\item \textit{good}, respectively \textit{very good}, for $G$ if it is good, respectively very good, for each irreducible component of $\Phi = \Phi(G,T)$.
	\end{itemize}
	
Table \ref{Hypothèses_sur_la caractéristiqu_ pour_un_groupe_simple} allows to check that if the characteristic is very good for $G$ then it is not of torsion for $G$. Moreover, any $p >\h(G)$ is very good for $G$. 

Finally, we remind of the reader that a reductive $k$-group $G$ satisfies \textit{the standard hypotheses} (see \cite[2.9]{Jantzen2004}) if:
\begin{enumerate}
		\item[(H1)] the derived group of $G$ is simply connected, 
		\item[(H2)] the characteristic of $k$ is good for $G$,
		\item[(H3)] there exists a non degenerate $G$-equivariant symmetric bilinear form over $\mathfrak{g}$.
	\end{enumerate} 
\noindent Let us recall that according to \cite[I, \S8, Corollary]{SEL}, if $\mathfrak{g}$ is endowed with a non degenerate $G$-equivariant symmetric bilinear form, the Lie algebra is semisimple (i.e. its solvable radical is trivial, see \cite[1.7, Definition]{FS}). Let us remark that any  simple and simply connected $k$-group $G$ for which $k$ is of very good characteristic satisfies the standard hypotheses. Let us finally note that J. C. Jantzen provides:
\begin{itemize}
\item some explicit characterisation for Lie algebras of reductive groups that satisfy the standard hypotheses (see \cite[2.9]{Jantzen2004}), 
\item some criteria for adjoint reductive groups to satisfy these (see \cite[B.6]{JANtokyo}). 
\end{itemize} 

Let $k$ be an algebraically closed field of characteristic $p>0$ and $G$ be a semisimple $k$-group. When $p$ is not of torsion for $G$ the following corollary can be deduced from \cite[Theorem 2.2 and Remark a)]{LMT} :

\begin{corollary}[of {\cite[Theorem 2]{LMT}}, see {\cite[Remark 2.2]{JEA1}}]
Let $k$ be an algebraically closed field of characteristic $p>0$ and $G$ be a reductive $k$-group. Assume that $p$ is not of torsion for $G$. Let $\mathfrak{u} \subset \mathfrak{g}$ be a restricted $p$-nil $p$-subalgebra. Then $\mathfrak{u}$ is a subalgebra of the Lie algebra of the unipotent radical of a Borel subgroup $B \subseteq G$.
\label{corollaire_LMT}
\end{corollary}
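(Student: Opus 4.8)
The plan is to deduce this corollary from the main result of \cite{LMT}, whose hypotheses bear on subalgebras of $\mathfrak{g}$ all of whose elements are nilpotent; the only genuine work is to translate the restricted $p$-nil condition on $\mathfrak{u}$ into that purely Lie-theoretic condition, and to check that the torsion hypothesis is precisely the one under which \cite[Theorem 2.2 and Remark a)]{LMT} applies.

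First I would show that every element of $\mathfrak{u}$ is a nilpotent element of $\mathfrak{g}$. Since $\mathfrak{u}$ is $p$-nil, for each $x \in \mathfrak{u}$ there is an integer $m$ with $x^{[p]^m} = 0$. Iterating the second property of the $p$-power map recalled in the introduction, namely $\ad(x^{[p]}) = (\ad x)^p$, gives $(\ad x)^{p^m} = \ad\big(x^{[p]^m}\big) = 0$, so that $\ad x$ is nilpotent and $x$ is a nilpotent element of $\mathfrak{g}$. Hence $\mathfrak{u}$ is a subalgebra consisting of nilpotent elements (and, by Engel's theorem, is even a nilpotent Lie algebra, should the cited statement be phrased that way).

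I would then apply \cite[Theorem 2.2]{LMT}: since $p$ is not of torsion for $G$ (this is where Remark a) intervenes, weakening a good-characteristic hypothesis to the torsion-free one), any subalgebra of $\mathfrak{g}$ consisting of nilpotent elements is $G$-conjugate into, hence contained in, the Lie algebra $\Lie(\Rad_U(B))$ of the unipotent radical of some Borel subgroup $B \subseteq G$. Combining the two steps yields the assertion.

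I expect the sole obstacle to be bookkeeping rather than any real difficulty, since the substantive content — that a subalgebra of $\mathfrak{g}$ consisting of nilpotent elements can be conjugated into a single Borel nilradical — is exactly what \cite{LMT} provides. Concretely, if their theorem is stated only for semisimple $G$ one first reduces to the derived group $\mathcal{D}G$: every nilpotent element of $\mathfrak{g}$ already lies in $\Lie(\mathcal{D}G)$ (its image in the toral quotient $\mathfrak{g}/\Lie(\mathcal{D}G)$ is simultaneously semisimple and $p$-nilpotent, hence zero), and the unipotent radical of a Borel subgroup of $G$ coincides with that of the corresponding Borel subgroup of $\mathcal{D}G$; one then only has to confirm that the torsion condition of \cite{LMT} matches the one recalled in Section \ref{hypothèses_car}.
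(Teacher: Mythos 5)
Your proposal is correct and follows exactly the route the paper intends: the paper gives no written proof, simply citing \cite[Theorem 2.2 and Remark a)]{LMT} together with \cite[Remark 2.2]{Jea1}, and your reconstruction (a $p$-nil $p$-subalgebra consists of nilpotent elements of $\mathfrak{g}$ via $\ad(x^{[p]})=(\ad x)^p$, then apply the LMT theorem under the non-torsion hypothesis, reducing from reductive to semisimple through the derived group since $p$-nilpotent elements die in the toral quotient $\mathfrak{g}/\Lie(\mathcal{D}G)$) is precisely the deduction being delegated to those references. No gap.
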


Assume now that $G$ is a reductive $k$-group. When $G$ is semisimple the characteristic of $k$ is separably good for $G$ if:
\begin{enumerate}
	\item the integer $p$ is separably good for $G$,
	\item the morphism $G^{\Sc} \rightarrow G$, where $G^{\Sc}$ is the simply connected cover of $G$, is separable.
\end{enumerate}
When $G$ is a reductive group, the integer $p$ is separably good if it is separably good for the derived group $[G,G]$ (see \cite[Definition 2.2]{PevSta}). As underlined in the paragraph that follows \cite[Definition 2.2]{PevSta}, if $p$ is really good for $G$ then it is separably good. This last condition is nevertheless less restrictive when $G$ is of type $A$, which is the only type for which good, separably good, and very good integers do not coincide. As an example $p$ is separably good but not very good for $\SL_p$ or $\GL_p$. However, it is not separably good nor very good for $\PGL_p$. 
%
We briefly show here two technical lemmas that will allow us to compare the different bounds on the characteristic in what follows. Here $k$ is assumed to be algebraically closed.

\begin{lemma}
Let $d$ be the dimension of a minimal faithful representation of $G$. Then one has $d > \rg(G)$.
\label{dim_et_rang}
\end{lemma}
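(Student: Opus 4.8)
The plan is to restrict a faithful representation to a maximal torus and then exploit the symmetry of its weights under the Weyl group. Fix a maximal torus $T \subseteq G$, so that $\dim T = \rg(G) =: r$, and let $(V, \rho)$ be a faithful representation of $G$ with $d := \dim V$ minimal. Restricting $\rho$ to $T$ and decomposing $V$ into weight spaces produces a multiset of characters $\chi_1, \dots, \chi_d \in X^{*}(T) \cong \mathbb{Z}^r$, counted with multiplicity.

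First I would establish the weak inequality $d \geq r$. Since $\rho$ is faithful, $T$ embeds into $\GL(V)$, so $\ker(\rho|_T) = \bigcap_{i} \ker \chi_i$ is trivial; equivalently the $\chi_i$ generate a finite-index subgroup of $X^{*}(T)$, hence span $X^{*}(T) \otimes_{\mathbb{Z}} \mathbb{Q} \cong \mathbb{Q}^r$. This already gives $d \geq r$, and the whole point of the lemma is to promote this to a strict inequality.

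To obtain $d > r$ I would argue by contradiction, assuming $d = r$. Spanning then forces the $r$ weights to be pairwise distinct, each occurring with multiplicity one, and to form a $\mathbb{Q}$-basis of $X^{*}(T) \otimes \mathbb{Q}$. The key structural input is that the formal character of $V$ is invariant under $W = W(G,T)$, so $W$ permutes the set $\{\chi_1, \dots, \chi_r\}$; since these are distinct basis vectors, $W$ acts by permuting a basis of $\mathbb{Q}^r$. The vector $\sum_{i=1}^{r} \chi_i$ is then a \emph{nonzero} $W$-fixed element of $X^{*}(T) \otimes \mathbb{Q}$ (nonzero because the $\chi_i$ are linearly independent), which contradicts the fact that the reflection representation of $W$ on $X^{*}(T) \otimes \mathbb{Q}$ admits no nonzero fixed vector. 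Hence $d \geq r+1$.

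I expect the genuine content — and the main obstacle — to lie in that last structural fact: that $W$ has no nonzero invariants on $X^{*}(T) \otimes \mathbb{Q}$. This vanishing holds precisely when the reductive and semisimple ranks of $G$ coincide, i.e. for semisimple $G$ (for an irreducible root system the reflection representation is irreducible and nontrivial, and the semisimple case follows by summing over the components). A central torus, on which $W$ acts trivially, is exactly the feature that would obstruct the strict inequality in the bare reductive case, so the argument genuinely requires controlling the semisimple part of $G$; identifying the $W$-invariants of $X^{*}(T)\otimes\mathbb{Q}$ with the character space of the central torus is the one point that deserves care.
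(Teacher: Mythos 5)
The first half of your argument --- restricting to a maximal torus $T$ and observing that the weights of a faithful representation must generate a finite-index (in fact the full) subgroup of $X^{*}(T)$, hence span $X^{*}(T)\otimes_{\mathbb{Z}}\mathbb{Q}$, so that $d\geq \rg(G)$ --- is exactly the paper's entire proof, just written out in more detail: the paper merely notes that $\rho$ restricts to a faithful representation of $T$ and concludes $d\geq n=:\rg(G)$. In particular the paper itself only ever establishes the \emph{weak} inequality, even though the lemma asserts a strict one.

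Your attempt to upgrade this to $d>\rg(G)$ is where the real issue sits, and the obstacle you flag at the end is not merely a point that ``deserves care'': it is fatal at the level of generality of the statement. The standing convention here is that $G$ is reductive and connected, not semisimple, and for such $G$ the strict inequality is simply false: $G=\GL_n$ has rank $n$ and its standard representation is faithful of dimension $n$, so $d=\rg(G)$ (the paper implicitly uses $d=n$ for $\GL_n$ in the remark following Proposition \ref{morozov_Herpel_Stewart}); the same happens for any torus. Your Weyl-group argument --- distinct weights forming a basis, their $W$-invariant sum being nonzero, contradiction with $(X^{*}(T)\otimes\mathbb{Q})^{W}=0$ --- is correct precisely when $G$ is semisimple, since only then do the roots span $X^{*}(T)\otimes\mathbb{Q}$ and kill all $W$-invariants; it cannot be repaired in the reductive case because the statement fails there. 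None of this does harm downstream: the lemma's only use is in Lemma \ref{borne_p_grand}, where $d\geq\rg(G)$ (which both you and the paper do prove) already gives $2\h(G)-2<2^{2\rg(G)}\leq 2^{2d}$. So your proposal proves what can be proved and correctly isolates why the strict version requires semisimplicity; the defect lies in the statement and in the paper's proof (which silently delivers only $\geq$), not in your reasoning.
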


\begin{proof}
Let $\rho : G \rightarrow \GL_n$ be a minimal faithful representation of $G$. The representation $\rho$ is also a faithful representation of a maximal torus $T \subseteq G$. If we denote by $n$ the dimension of $T$ then one necessary has $d\geq n= : \rg(G)$.
\end{proof}

%

\begin{lemma}
The inequality $2\h(G)-2 \leq 2^{2d}$ holds true for any reductive $k$-group $G$. 
\label{borne_p_grand}
\end{lemma}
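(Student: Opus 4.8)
The plan is to play the (at most) linear growth of $\h(G)$ in the rank against the exponential growth of the right-hand side in $d$, using Lemma~\ref{dim_et_rang} to turn a bound on the rank into a bound on $d$, and reading the relevant Coxeter numbers off Table~\ref{Hypothèses_sur_la caractéristiqu_ pour_un_groupe_simple}.

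First I would reduce to a single simple factor (the case $\Phi=\varnothing$, i.e.\ $G$ a torus, being trivial). By definition $\h(G)$ is the largest Coxeter number among the irreducible components of $\Phi=\Phi(G,T)$; choose a component realising this maximum and let $r$ be its rank. Since the ranks of the irreducible components of $\Phi$ add up to the semisimple rank of $G$, which is at most $\rg(G)$, one has $r\le\rg(G)$, and Lemma~\ref{dim_et_rang} then gives $r+1\le\rg(G)+1\le d$.

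Next I would establish the uniform estimate $\h(G)\le 2^{\,r+1}$ by a glance at Table~\ref{Hypothèses_sur_la caractéristiqu_ pour_un_groupe_simple}. For the classical types one has $\h(G)\le 2r$ (explicitly $r+1$ in type $A_r$, $2r$ in types $B_r$ and $C_r$, and $2(r-1)$ in type $D_r$), and $2r\le 2^{r+1}$ holds for all $r\ge 1$; for the finitely many exceptional types the inequality is checked directly, the tightest case being $G_2$, where $\h(G)=6\le 8=2^{3}$. Because $r+1\le d$, this yields $\h(G)\le 2^{r+1}\le 2^{d}$.

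It then remains to conclude with the elementary estimate
\[
2\h(G)-2\;\le\;2\cdot 2^{d}-2\;=\;2^{\,d+1}-2\;\le\;2^{2d},
\]
the last inequality holding since $d+1\le 2d$ for every $d\ge 1$ (and $d\ge 1$ because $d>\rg(G)\ge 0$). The only genuine work is the case analysis for $\h(G)\le 2^{r+1}$; I expect the exceptional types $E_7$, $E_8$, $F_4$, $G_2$ --- for which $\h(G)$ exceeds $2r$ --- to be the point needing care, but the exponential bound $2^{r+1}$ absorbs them with room to spare.
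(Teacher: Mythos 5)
Your proof is correct and follows essentially the same route as the paper: reduce to the irreducible component of $\Phi$ realising $\h(G)$, bound its Coxeter number against its rank by inspecting Table \ref{Hypothèses_sur_la caractéristiqu_ pour_un_groupe_simple}, and convert the rank bound into a bound on $d$ via Lemma \ref{dim_et_rang}. The only cosmetic difference is that you bound the rank $r$ of that component directly by $\rg(G)$ and route the arithmetic through $\h(G)\le 2^{r+1}\le 2^{d}$, whereas the paper restricts the faithful representation to each simple factor and checks $2\h(G_i)-2<2^{2\rg(G_i)}<2^{2d_i}$ from the same table; both come down to the same finite case check.
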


\begin{proof}
When $G$ is simple the values of $\h(G)$ and $\rg(G)$ are listed in Table \ref{Hypothèses_sur_la caractéristiqu_ pour_un_groupe_simple} and one can check that the inequalities $2\h(G)-2 < 2^{2\rg(G)} < 2^{2d}$ are indeed satisfied (according to Lemma \ref{dim_et_rang}).

Let us denote by $G_i$ the simple groups defined by the irreducible components of $\Phi$. The restriction of a faithful representation of $G$ to $G_i$ is still a faithful representation for $G_i$. If now $d_i$ is the dimension of a minimal faithful representation of $G_i$ one has $d \geq d_i \geq \h(G_i)$ for any $G_i$ according to what precedes. As there exists an integer $i$ for which $\h(G) = \h(G_i)$ the inequalities $2\h(G)-2 \leq 2^{2\sum_{i=1}^m d_i} \leq 2^{2d}$ then hold true, whence the expected result. 
\end{proof}
 
\subsection{Parabolic subalgebras}
\label{ssalg_parab}

A parabolic subalgebra $\mathfrak{p} \subseteq \mathfrak{g}$ is the Lie algebra of a parabolic subgroup $P \subseteq G$. This is equivalent to requiring the existence of a Cartan subalgebra $\mathfrak{t}\subset \mathfrak{g}$ such that $\mathfrak{p} = \mathfrak{t} \oplus \bigoplus_{\alpha \in \Phi'} \mathfrak{g}_{\alpha}$, where the subset $\Phi' \subseteq \Phi$ is a parabolic subset of $\Phi$ (see \cite[XXVI, Proposition 1.4]{SGA33}). Note that this description allows us to naturally define the Coxeter number of a parabolic subgroup of $G$: it is the height plus one of the highest root of the associated parabolic subset. In particular one has $\h(G) = \h(B)$ for any Borel subgroup $B\subset G$.
\noindent When $k$ is of characteristic $0$ these conditions are satisfied if and only if $\mathfrak{p}$ contains the Lie algebra of a Borel subgroup of $G$. When $k$ is a field of characteristic $p>0$ such a characterisation does no longer hold true: a parabolic subalgebra always contains a Borel subalgebra (because a parabolic subgroup always contains a Borel subgroup), but there are some pathological characteristics for which one can find Lie subalgebras of $\mathfrak{g}$ that contain a Borel subalgebra but do not derive from a parabolic subgroup of $G$. 

\begin{proposition}
If $k$ is of characteristic $p \neq 2, 3$, a subalgebra $\mathfrak{p} \subseteq \mathfrak{g}$ is parabolic if and only if it contains a Borel subalgebra.
\label{parabolic_algebra}
\end{proposition}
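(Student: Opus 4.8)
The proof splits into the two implications, and the hypothesis $p\neq 2,3$ serves to guarantee two root-theoretic facts. The forward implication is immediate and was already observed above: if $\mathfrak{p}=\Lie(P)$ for a parabolic subgroup $P\subseteq G$, then $P$ contains a Borel subgroup $B$, whence $\mathfrak{b}=\Lie(B)\subseteq\mathfrak{p}$. The content therefore lies in the converse. Fix a Borel subalgebra $\mathfrak{b}=\mathfrak{t}\oplus\bigoplus_{\alpha\in\Phi^+}\mathfrak{g}_\alpha\subseteq\mathfrak{p}$ attached to a maximal torus $T$ and a positive system $\Phi^+$. Since $\mathfrak{t}\subseteq\mathfrak{b}\subseteq\mathfrak{p}$ and $\mathfrak{p}$ is a subalgebra, $\mathfrak{p}$ is stable under every operator $\ad(H)$ with $H\in\mathfrak{t}$.

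The first step is to produce a root space decomposition of $\mathfrak{p}$. The operators $\ad(H)$, $H\in\mathfrak{t}$, act diagonally on $\mathfrak{g}$ with eigenvalue $\diff\alpha(H)$ on $\mathfrak{g}_\alpha$ and eigenvalue $0$ on $\mathfrak{t}$. The plan is to show that, when $p\neq 2,3$, the linear forms $\{\diff\alpha:\alpha\in\Phi\}\cup\{0\}$ on $\mathfrak{t}$ are pairwise distinct; then every $\ad(\mathfrak{t})$-stable subspace is a sum of the common eigenspaces $\mathfrak{t}$ and $\mathfrak{g}_\alpha$, and since $\mathfrak{p}\supseteq\mathfrak{t}$ one obtains
\[\mathfrak{p}=\mathfrak{t}\oplus\bigoplus_{\alpha\in\Psi}\mathfrak{g}_\alpha,\qquad \Phi^+\subseteq\Psi:=\{\alpha\in\Phi:\mathfrak{g}_\alpha\subseteq\mathfrak{p}\}\subseteq\Phi.\]
To establish the distinctness, note that $\diff\alpha=\diff\beta$ forces $\alpha-\beta\in pX^*(T)$, hence $\langle\alpha-\beta,\gamma^\vee\rangle\equiv 0\pmod p$ for every coroot $\gamma^\vee$. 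Pairing with $\alpha^\vee$ and $\beta^\vee$ and using that Cartan integers lie in $\{0,\pm1,\pm2,\pm3\}$ pins down, for $p\geq 7$, the relations $\langle\beta,\alpha^\vee\rangle=\langle\alpha,\beta^\vee\rangle=2$, forcing $\alpha$ and $\beta$ proportional, hence $\beta=\pm\alpha$, which a direct check excludes. For $p=5$ the same pairing leaves only the proportional case or a Cartan integer $-3$ occurring in type $G_2$; the latter is ruled out by the Weyl-invariant estimate $p^2\abs{\lambda}^2=\abs{\alpha-\beta}^2\leq(\abs\alpha+\abs\beta)^2$, which is incompatible with $p\geq 5$ when $\alpha-\beta=p\lambda$ with $\lambda\neq 0$. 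Thus the decomposition holds.

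The second step is to identify $\Psi$ as a parabolic subset of $\Phi$. Since $\mathfrak{p}$ is a subalgebra, for $\alpha,\beta\in\Psi$ with $\alpha+\beta\in\Phi$ the bracket $[\mathfrak{g}_\alpha,\mathfrak{g}_\beta]$ is contained in $\mathfrak{g}_{\alpha+\beta}\cap\mathfrak{p}$. Writing $[e_\alpha,e_\beta]=N_{\alpha,\beta}e_{\alpha+\beta}$ for generators of the root lines, one has $\abs{N_{\alpha,\beta}}\leq 3$, the value $3$ occurring only in type $G_2$ and the value $2$ only in the multiply laced types; hence $N_{\alpha,\beta}\not\equiv 0\pmod p$ when $p\neq 2,3$, so $\mathfrak{g}_{\alpha+\beta}\subseteq\mathfrak{p}$ and $\alpha+\beta\in\Psi$. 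Thus $\Psi$ is closed, and since $\Psi\supseteq\Phi^+$ one automatically has $\Psi\cup(-\Psi)=\Phi$, so $\Psi$ is a parabolic subset of $\Phi$. By the description of parabolic subalgebras recalled in Section~\ref{ssalg_parab} (see \cite[XXVI, Proposition 1.4]{SGA33}), the subalgebra $\mathfrak{p}=\mathfrak{t}\oplus\bigoplus_{\alpha\in\Psi}\mathfrak{g}_\alpha$ is therefore parabolic, which concludes the proof.

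The main obstacle is the distinctness of the differentials $\diff\alpha$ in the first step, and it is precisely here that the hypothesis is sharp. In characteristic $2$ one has $\diff\alpha=\diff(-\alpha)$ for every root, and in characteristic $3$ the roots $\beta$ and $3\alpha+\beta$ of $G_2$ satisfy $\diff\beta=\diff(3\alpha+\beta)$; in both cases the $\ad(\mathfrak{t})$-eigenspaces amalgamate several root lines, the decomposition above breaks down, and the closedness argument loses its meaning. This is what makes the excluded characteristics genuinely pathological and explains why the statement is restricted to $p\neq 2,3$.
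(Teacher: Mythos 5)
Your proof is correct and follows essentially the same route as the paper's: both decompose $\mathfrak{p}$ into $\mathfrak{t}$-weight spaces as $\mathfrak{t}\oplus\bigoplus_{\alpha\in\Psi}\mathfrak{g}_\alpha$ and then reduce to showing that $\Psi$ is a closed subset of $\Phi$ containing $\Phi^+$, hence parabolic. The only differences are ones of detail: you verify by hand (via Cartan integers) that the differentials $\diff\alpha$ are pairwise distinct, a point the paper's proof passes over silently, and you establish closedness directly from the bound $\abs{N_{\alpha,\beta}}\leq 3$ on the structure constants where the paper instead cites \cite[XXIII, Corollaire 6.6]{SGA33}.
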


\begin{proof}
Let $\mathfrak{p} \subseteq \mathfrak{g}$ be a Lie subalgebra that contains a Borel subalgebra $\mathfrak{b}$. One needs to show that $\mathfrak{p}$ is of the form $\mathfrak{t} \oplus \bigoplus_{\alpha \in \Phi'} \mathfrak{g}_{\alpha}$ with $\mathfrak{t}$ and $\Phi'$ as in the preamble of this section. As $\mathfrak{p}$ contains a Borel subalgebra $\mathfrak{b}$, it contains the Lie algebra of a maximal torus $\mathfrak{t}$. This Cartan subalgebra acts on $\mathfrak{p}$ via the Lie bracket. The roots of $\mathfrak{t}$ define a family of endomorphisms of $\mathfrak{t}$ that are diagonalisable and that commute with each other. They are therefore simultaneously diagonalisable and $\mathfrak{p}$ decomposes as a sum of weight spaces for this action. Namely one has $\mathfrak{p} = \mathfrak{t} \oplus \bigoplus_{\alpha \in \Phi'} \mathfrak{g}_{\alpha}$. It remains to show that:
	\begin{itemize}
		\item the subset $\Phi' \subseteq \Phi$ contains a positive root system. Which is immediate because $\mathfrak{p}$ contains the Lie algebra of a Borel subgroup  by assumption,
		\item the subset $\Phi' \subseteq \Phi$ is of type $(R)$ (see \cite[XXII, Définition 5.4.2]{SGA33}). According to \cite[XXII, Théorème 5.4.7]{SGA33} one needs to remark that $\Phi'$ is closed. In other words, for any roots $\alpha, \ \beta \in \Phi'$ such that $\alpha + \beta \in \Phi$ one needs to show that $\alpha + \beta \in \Phi'$. This last condition is satisfied as soon as $p \neq 2,3$ according to \cite[XXIII, Corollaire 6.6]{SGA33}. 
	\end{itemize}
\end{proof}

\begin{remark}
If $G$ is of type $A_1$, one only needs to require $p \neq 2$ in Proposition \ref{parabolic_algebra}.
\end{remark}

\begin{exs}
We provide here examples of Lie subalgebras that contain a Borel subalgebra and do not derive from a parabolic subgroup of $G$.
\begin{enumerate}
	\item When $G = \PGL_3$ and $k$ is of characteristic $3$, let us consider the Lie subalgebra 
	\[\mathfrak{p}:= \left\lbrace \begin{pmatrix}
a & b & c \\
d & e & f  \\
0 & td & g
\end{pmatrix} \mid d, t \in k^{\times}\right\rbrace \subseteq \mathfrak{pgl}_3(k).\] It contains the Borel subalgebra of upper triangular matrices with trace $0$. Notwithstanding this, the Lie algebra $\mathfrak{p}$ cannot be written as the Lie algebra of a parabolic subgroup $P \subseteq \PGL_3$ (as this can be checked when $k$ is of characteristic $0$ because no parabolic subgroup admits $\mathfrak{p}$ as a Lie algebra).
	\item We still assume that $G = \PGL_3$ and $k$ is an algebraically closed field of characteristic $p=3$. We consider the example of \cite{LMT}: we set $X = \begin{pmatrix}
		0 & 1 & 0 \\
		0 & 0 & 1 \\
		0 & 0 & 0	
	\end{pmatrix}
	\text{ and } Y = \begin{pmatrix}
		0 & 0 & 0 \\
		1 & 0 & 0 \\
		0 & 2 & 0
		\end{pmatrix}.$
Let $\mathfrak{u} := \left< X,Y \right> \subsetneq \mathfrak{pgl}_3$ be the subalgebra generated by $X$ and $Y$. It is an $\ad$-nilpotent subalgebra of $\mathfrak{pgl}_3$. Its normaliser is given by $\N_{\mathfrak{pgl}_3}(\mathfrak{u}) = \left\lbrace
\begin{pmatrix}
a & b & c \\
d & e & b \\
g & -d & i
\end{pmatrix}\right\rbrace$ that contains a Borel subalgebra, namely the Lie algebra of the Borel subgroup obtained by conjugating the Borel subgroup of upper triangular matrices by $\begin{pmatrix}
0 & 1 & 0 \\
1 & 0 & 0 \\
0 & 0 & 1
\end{pmatrix}$. However $\mathfrak{p}$ does not derive from a parabolic subgroup of $G$.
\end{enumerate}
\end{exs}

\subsection{Lie algebras and normalisers}
The formalism used in this section is developed in \cite[II, \S 4]{DG}. We especially refer the reader to \cite[II, \S 4, 3.7]{DG} for notations. Let $A$ be a ring and $G$ be an affine $A$-group functor. As a reminder: 
	
\begin{enumerate}
	
	\item if $R$ is an $A$-algebra, we denote by $R[t]$ the algebra of polynomials in $t$ and by $\epsilon$ the image of $t$ via the projection $R[t] \rightarrow R[t]/(t^2)=:R[\epsilon]$.
\noindent We associate to $G$ a functor in Lie algebras, denoted by $\mathfrak{Lie}(G)$, which is the kernel of the following exact sequence: 
	
	\begin{figure}[H]
	\begin{center}
\[\begin{tikzpicture} 

 \matrix (m) [matrix of math nodes,row sep=2em,column sep=1.8em,minimum width=2em,  text height = 1.5ex, text depth = 0.25ex]
  {
    1 & \mathfrak{Lie}(G)(R)& G(R[\epsilon]) & G(R) & 1.\\};
  \path[-stealth]
  	(m-1-1) edge (m-1-2)
    (m-1-2) edge (m-1-3) 
   	(m-1-3) edge node [below] {$p$}(m-1-4) 
   	(m-1-4) edge (m-1-5) 
   	(m-1-4) edge[bend left = -50]
   			node [above] {$i$}(m-1-3);
\end{tikzpicture}\]
\end{center}
\end{figure}
For any $y \in \mathfrak{Lie}(G)(R)$ we denote by $\e^{\epsilon y}$ the image of $y$ in $G(R[\epsilon])$. In what follows the notation $\mathfrak{Lie}(G)(R)$ refers both to the kernel of $p$ and to its image in $G(R[\epsilon])$. The Lie-algebra of $G$ is $\mathfrak{Lie}(G)(A)$ and is denoted by $\Lie(G) := \mathfrak{g}$. According to \cite[II, \S 4, n\degree 4.8, Proposition]{DG} when $G$ is smooth or when $A$ is a field and $G$ is locally of finite presentation over $A$, the equality 
\[\Lie(G) \otimes_A R =  \mathfrak{Lie}(G)(A)\otimes_A R = \mathfrak{Lie}(G)(R) = \Lie(G_R)\] holds true for any $A$-algebra $R$ (these are sufficient conditions). When the aforementioned equality is satisfied the $A$-functor $\mathfrak{Lie}(G)$ is representable by $W(\mathfrak{g})$, where for any $A$-module $M$ and any $A$-algebra $R$ we set $W(M)(R) := M \otimes_A R$;
\item the $A$-group functor $G$ acts on $\mathfrak{Lie}(G)$ as follows: for any $A$-algebra $R$ the induced morphism is the following:
\begin{alignat*}{3}
\Ad_R : \: & G_R\: &\rightarrow \: & \Aut(\mathfrak{Lie}(G))(R),\\ 
\: & g \: & \mapsto \: & \Ad_R(g) : \mathfrak{Lie}(G)(R) \rightarrow \mathfrak{Lie}(G)(R) : x \mapsto i(g)xi(g)^{-1}.
\end{alignat*}
When $G$ is smooth (in particular when $\mathfrak{Lie}(G)$ is representable) the $G$-action on $\mathfrak{Lie}(G)$ defines a linear representation $G \rightarrow \GL(\mathfrak{g})$ (see \cite[II, \S 4, n\degree 4.8, Proposition]{DG}).
\end{enumerate}
With the above notations, given:
\begin{enumerate}
\item a closed subgroup $H\subseteq G$, the normaliser of $H$ in $G$ is the closed subscheme of $G$ defined for any $k$-algebra $A$ by
\[N_G(H)(A) := \{g\in G(A) \mid \Ad(g)(H(R)) =H(R) \text{ for any } A-\text{algebra } R.\}\]
\item a subalgebra $\mathfrak{h}\subseteq \mathfrak{g}$,
 \begin{enumerate}
 \item the normaliser of $\mathfrak{h}$ in $G$ is the closed subscheme of $G$ defined for any $k$-algebra $A$ by
\[N_G(\mathfrak{h})(A) := \{g\in G(A) \mid \Ad(g)(\mathfrak{h}(R)) =\mathfrak{h}(R) \text{ for any } A-\text{algebra } R.\}\]
\item the normaliser of $\mathfrak{h}$ in $\mathfrak{g}$ is the closed subscheme of $\mathfrak{g}$ defined for any $k$-algebra $A$ by
\[N_{\mathfrak{g}}(\mathfrak{h})(A) := \{g\in \mathfrak{g}(A) \mid \ad(g)(\mathfrak{h}(R)) =\mathfrak{h}(R) \text{ for any } A-\text{algebra } R.\}\]
\end{enumerate}
\end{enumerate} 

The following result is a quick reminder of useful facts for the proofs that follow:
\begin{lemma}
Let $G$ be a reductive $k$-group. Then:
\begin{enumerate}
\item let $d$ be the dimension of a minimal faithful representation for $G$. If $p>2^{2d}$ then all normalisers of subspaces of $\mathfrak{g}$ are smooth (see \cite[Theorem A]{HS});
\item consider a subset $\mathfrak{h}\subset \mathfrak{g}$, then one has $\Lie(N_{G}(\mathfrak{h})) = N_{\mathfrak{g}}(\mathfrak{h})$ (see \cite[Exercise 2.4.8]{C} and for instance \cite[Lemma 6.4]{JEA1} for a proof; note that the additional assumption of $\mathfrak{h}$ being a subalgebra required in the statement is not used in the proof);
\item let $H\subseteq G$ be a closed subgroup then: \begin{enumerate}
	\item the inclusion $N_G(H) \subseteq N_G(\mathfrak{Lie}(H))$ is satisfied. In particular if $H$ is smooth this leads to the inclusion $N_G(H)(R) \subseteq N_G(\mathfrak{h}_R)$ for any $A$-algebra $R$ (see \cite[Lemma 6.3]{JEA1});

	\item if moreover $H\subseteq G$ is smooth, then:
\begin{itemize}
		\item in general only the inclusion $\Lie(N_G(H)) \subseteq N_{\mathfrak{g}}(\mathfrak{h})$ holds true,
		\item if $H(k)$ is Zariski-dense in $H$ then
\[\Lie(N_G(H)) = \lbrace x \in \mathfrak{g} \mid \Ad(h)(x) - x \in \mathfrak{h} \ \forall h \in H(k)\rbrace.\]
\end{itemize} 
(see \cite[Lemma 6.4]{JEA1}).
\end{enumerate}
\end{enumerate}
\label{reminder_normaliser}
\end{lemma}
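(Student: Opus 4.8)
The plan is to establish the three assertions in turn, each being a reminder whose core is the cited reference; the remaining work is to indicate why the inclusions hold and, for the last one, where equality can fail. For assertion (1) I would directly invoke \cite[Theorem A]{HS}, which asserts precisely that every normaliser of a subspace of $\mathfrak{g}$ is smooth once $p>2^{2d}$, with $d$ the dimension of a minimal faithful representation. It is worth recording that, by Lemma \ref{borne_p_grand}, the hypothesis $p>2^{2d}$ entails $p>2\h(G)-2$, so this smoothness regime lies well above the Coxeter number.

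For assertion (2)(a), the mechanism is functoriality of the adjoint action together with the defining exact sequence of $\mathfrak{Lie}(H)$. Given $g\in N_G(H)(A)$, the identity $\Ad(g)(H(R))=H(R)$ holds for every $A$-algebra $R$; applying it to $R[\epsilon]$ yields $\Ad(g)(H(R[\epsilon]))=H(R[\epsilon])$. Since conjugation by $g$ is compatible with the projection $H(R[\epsilon])\to H(R)$, it preserves the kernel $\mathfrak{Lie}(H)(R)$, so $g\in N_G(\mathfrak{Lie}(H))(A)$. When $H$ is smooth, $\mathfrak{Lie}(H)$ is represented by $W(\mathfrak{h})$ and the inclusion specialises to $N_G(H)(R)\subseteq N_G(\mathfrak{h}_R)$; this is \cite[Lemma 6.3]{Jea1}.

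For assertion (2)(b), the inclusion $\Lie(N_G(H))\subseteq N_{\mathfrak{g}}(\mathfrak{h})$ follows by passing to Lie algebras in the inclusion $N_G(H)\subseteq N_G(\mathfrak{h})$ just obtained and using the general containment $\Lie(N_G(\mathfrak{h}))\subseteq N_{\mathfrak{g}}(\mathfrak{h})$, itself the differentiated form of the condition $\Ad(g)\mathfrak{h}=\mathfrak{h}$. For the refined identity under Zariski density of $H(k)$, I would compute the tangent space at the identity directly: $x\in\Lie(N_G(H))$ means $\e^{\epsilon x}\in N_G(H)(k[\epsilon])$, and for $h\in H(k)$ the first-order expansion $\e^{\epsilon x}\,h\,\e^{-\epsilon x}=h\,\e^{\epsilon(\Ad(h^{-1})x-x)}$ shows this is equivalent to $\Ad(h^{-1})x-x\in\mathfrak{h}$, that is, $\Ad(h)x-x\in\mathfrak{h}$ as $h$ ranges over $H(k)$. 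Zariski density of $H(k)$ in $H$ is what permits the scheme-theoretic normalising condition to be tested on these points alone; this is \cite[Lemma 6.4]{Jea1}.

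The delicate point, and the reason assertion (2)(b) yields only an inclusion in the absence of the density hypothesis, is the possible non-smoothness of $N_G(H)$ in characteristic $p$. When the normaliser fails to be smooth its Lie algebra can be strictly smaller than $N_{\mathfrak{g}}(\mathfrak{h})$, so one cannot in general recover the latter as a tangent space; the Zariski density hypothesis is exactly the device that circumvents this, by reducing a scheme-theoretic normalising condition to a condition on honest $k$-points.
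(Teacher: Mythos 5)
The paper offers no proof of this lemma: it is stated as a reminder, with each item delegated to a citation (\cite[Theorem A]{HS} for (1), \cite[Lemma 6.3]{Jea1} and \cite[Lemma 6.4]{Jea1} for (2)). Your reconstruction supplies exactly the standard arguments behind those citations --- the functoriality of $\Ad$ applied to $R[\epsilon]$ and the defining exact sequence of $\mathfrak{Lie}(H)$ for (2)(a), and the first-order expansion $\e^{\epsilon x}h\e^{-\epsilon x}=h\e^{\epsilon(\Ad(h^{-1})x-x)}$ together with Zariski density for (2)(b) --- and these are correct. One caveat on your closing commentary: the failure of equality in the first bullet of (2)(b) is not purely a smoothness phenomenon. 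One always has $\Lie(N_G(\mathfrak{h}))=N_{\mathfrak{g}}(\mathfrak{h})$ on tangent spaces, and the genuine sources of strictness are that $N_G(H)$ may be strictly smaller than $N_G(\mathfrak{h})$ as a scheme and that, even with $H(k)$ dense, the set $\lbrace x\mid \Ad(h)(x)-x\in\mathfrak{h}\ \forall h\in H(k)\rbrace$ need not coincide with $N_{\mathfrak{g}}(\mathfrak{h})$; smoothness of the normaliser is the issue addressed elsewhere in the paper (Theorem \ref{obstruction}), not the mechanism here. This does not affect the validity of the proof itself.
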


\subsection{Radicals of a restricted \texorpdfstring{$p$}{Lg}-Lie algebra}
\label{radicaux_p_alg_restreinte}
\begin{lemma}
Let $k$ be a perfect field of characteristic $p\geq 3$ and let $G$ be an algebraic $k$-group. Assume that $G$ is an extension of a $k$-group of multiplicative type $S$ by a smooth connected algebraic $k$-group $H$ such that $\iota(\Rad_U(H))$ is a normal subgroup of $G$:  \begin{figure}[H]
\begin{center}
\[\begin{tikzpicture} 

 \matrix (m) [matrix of math nodes,row sep=2em,column sep=4.8em,minimum width=2em,  text height = 1.5ex, text depth = 0.25ex]
  {
    1 & H & G & S & 1.\\
   };
  \path[-stealth]
  	(m-1-1) edge (m-1-2)
    (m-1-2) edge node[above]{$\iota$} (m-1-3) 
    (m-1-3) edge node[above]{$\pi$}(m-1-4) 
    (m-1-4) edge (m-1-5)
   ;
    	
\end{tikzpicture}\]
\end{center}
\end{figure}
\noindent Then the $p$-radical of $\mathfrak{g}$ identifies with the set of all $p$-nilpotent elements of $\rad(\mathfrak{g})$. Namely one has $\rad_p(\mathfrak{g})= \{x \in \rad(\mathfrak{g}) \mid \ x \text{ is } p\text{-nilpotent}\}= \Lie(\iota)(\rad_p(\mathfrak{h}))$.
\label{p_rad_inf_sat}
\end{lemma}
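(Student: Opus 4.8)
The plan is to prove the equality $\rad_p(\mathfrak{g})= \{x \in \rad(\mathfrak{g}) \mid x \text{ is } p\text{-nilpotent}\}= \Lie(\iota)(\rad_p(\mathfrak{h}))$ in two movements: first reducing the statement to the smooth connected group $H$ via the short exact sequence, and then invoking the second part of Lemma \ref{reminder_rad} applied to $H$. First I would take the Lie algebra of the exact sequence $1 \to H \to G \to S \to 1$. Since $S$ is of multiplicative type, $\mathfrak{s} := \Lie(S)$ is a torus Lie algebra, hence carries no nonzero $p$-nilpotent elements; thus any $p$-nilpotent element of $\mathfrak{g}$ must lie in the image of $\Lie(\iota)$. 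Because $\iota(\Rad_U(H))$ is normal in $G$, its Lie algebra $\Lie(\iota)(\radu(\mathfrak{h}))$ is a $p$-ideal of $\mathfrak{g}$ consisting of $p$-nilpotent elements, which gives the inclusion $\Lie(\iota)(\rad_p(\mathfrak{h})) \subseteq \rad_p(\mathfrak{g})$ once one identifies $\rad_p(\mathfrak{h})$ with $\radu(\mathfrak{h})$.

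Next I would handle the middle term $\{x \in \rad(\mathfrak{g}) \mid x \text{ is } p\text{-nilpotent}\}$. The key observation is that $\rad(\mathfrak{g})$ and $\rad(\mathfrak{h})$ are compatible along $\Lie(\iota)$: since $S$ is of multiplicative type, $\mathfrak{g}/\Lie(\iota)(\mathfrak{h}) \cong \mathfrak{s}$ is abelian (hence solvable) and reductive, so the solvable radical of $\mathfrak{g}$ is controlled by that of $\mathfrak{h}$ together with a central torus part coming from $S$; crucially, the $p$-nilpotent elements of $\rad(\mathfrak{g})$ cannot have a nonzero component in the multiplicative-type direction, so they again all sit inside $\Lie(\iota)(\mathfrak{h})$, and in fact inside $\Lie(\iota)(\rad(\mathfrak{h}))$. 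I would then apply the second point of Lemma \ref{reminder_rad} to the smooth connected group $H$: under the running hypothesis that $p$ is separably good for $H/\Rad_U(H)$ (which is what the characteristic assumption in point (1) is designed to guarantee for the $A_n$-type and $p \geq 3$ cases), that lemma gives $\rad_p(\mathfrak{h}) = \radu(\mathfrak{h}) = \{x \in \rad(\mathfrak{h}) \mid x \text{ is } p\text{-nilpotent}\}$. Transporting this equality through the injection $\Lie(\iota)$ and combining with the reductions above closes the chain of equalities.

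The main obstacle I anticipate is the careful bookkeeping of the radicals across the extension, specifically verifying that no $p$-nilpotent element of $\rad(\mathfrak{g})$ escapes into the part of $\mathfrak{g}$ lying over $\mathfrak{s}$, and that the solvable radical itself behaves well under $\Lie(\iota)$. The first is genuinely where the multiplicative-type hypothesis on $S$ does the work: an element of $\mathfrak{g}$ projecting to a nonzero element of the toral algebra $\mathfrak{s}$ is semisimple in its $\mathfrak{s}$-component and cannot be $p$-nilpotent, so $p$-nilpotency forces membership in $\mathrm{Im}(\Lie(\iota))$. The delicate point is that $\Lie(\iota)$ need not be injective on the nose in bad situations, which is precisely why the characteristic restrictions in hypothesis (1) — excluding $p=2$ except for $A_n$-type with $k$ algebraically closed — are imposed; these ensure the morphism $G^{\Sc} \to G$ (equivalently the relevant isogenies) stays separable so that $\Lie(\iota)$ is injective and the $p$-power structures match. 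Once injectivity and the separability needed to invoke Lemma \ref{reminder_rad}(2) are secured, the remaining steps are formal manipulations of $p$-ideals.
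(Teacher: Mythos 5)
Your proposal is correct and follows essentially the same route as the paper: the toral nature of $\mathfrak{s}$ forces every $p$-nilpotent element of $\rad(\mathfrak{g})$ into $\Lie(\iota)(\rad(\mathfrak{h}))$, the identification of the $p$-nilpotent part of $\rad(\mathfrak{h})$ with $\rad_p(\mathfrak{h}) = \radu(H)$ (Lemma \ref{reminder_rad}(2)) applies to $H$, and the normality of $\iota(\Rad_U(H))$ makes its image a $p$-nil $p$-ideal of $\mathfrak{g}$, closing the chain against the trivial inclusion $\rad_p(\mathfrak{g}) \subseteq \{x \in \rad(\mathfrak{g}) \mid x \text{ is } p\text{-nilpotent}\}$. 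One small correction: the injectivity of $\Lie(\iota)$ is automatic from left-exactness of $\Lie$ applied to the closed immersion $H \hookrightarrow G$, not something secured by the characteristic restrictions, whose role is only to validate the radical identification for $\mathfrak{h}$.
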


\begin{proof}
One inclusion is clear: the $p$-radical of $\mathfrak{g}$ is a $p$-nil $p$-ideal of $\rad(\mathfrak{g})$. Thus $\rad_p(\mathfrak{g})$ is contained in the set of all $p$-nilpotent elements of $\rad(\mathfrak{g})$ (see \cite{JEA1}[Lemma 2.5 ii)]).

The derived sequence:
\begin{figure}[H]
\begin{center}
\[\begin{tikzpicture} 

 \matrix (m) [matrix of math nodes,row sep=2em,column sep=4.8em,minimum width=2em,  text height = 1.5ex, text depth = 0.25ex]
  {
    0 & \mathfrak{h} & \mathfrak{g} & \mathfrak{s} & 0,\\
   };
  \path[-stealth]
  	(m-1-1) edge (m-1-2)
    (m-1-2) edge node[above]{$\Lie(\iota)$}(m-1-3) 
    (m-1-3) edge node[above]{$\Lie(\pi)$} (m-1-4) 
    (m-1-4) edge (m-1-5)
   ;
    	
\end{tikzpicture}\]
\end{center}
\end{figure}
\noindent is still exact as $H$ is smooth and the morphism $G\rightarrow S$ is surjective (see \cite[II, \S 5, Proposition 5.3]{DG}). This is moreover an exact sequence of $p$-morphisms of restricted $p$-Lie algebras (see \cite[II, \S7 n\degree 2.1 and n\degree 3.4]{DG}). 

Let $x$ be a $p$-nilpotent element of $\rad(\mathfrak{g})$. The image $\Lie(\pi)(x)$ is a $p$-nilpotent element of $\mathfrak{s}$ which is toral because it is the Lie algebra of a group of multiplicative type. The $p$-power map is therefore injective over $\mathfrak{s}$, hence the vanishing of $\Lie(\pi)(x)=0$. The exactness of the derived sequence ensures that $x$ belongs to the image of $\Lie(\iota)$. Thus there exists an element $z \in \mathfrak{h}$ such that $x = \Lie(\iota)(z)$. As $\iota$ is a $p$-morphism of restricted $p$-Lie algebras and $x$ is $p$-nilpotent (by assumption), there exists an integer $m$ such that
\[0 = x ^{[p^m]} = \iota(z)^{[p^m]} = \iota(z^{[p^m]}).\] 
This implies that $z^{[p^m]}=0$, so that $z$ is $p$-nilpotent because $\iota$ is injective. The element $x$ thus belongs to the set of $p$-nilpotent elements of $\rad(\mathfrak{g}) \cap \im(\Lie(\iota))$ which is:
\begin{itemize}
\item a solvable ideal of $\im(\Lie(\iota))$,
\item contained in  $\rad(\im(\Lie(\iota))(\mathfrak{h})\cong \rad(\mathfrak{h})$.
\end{itemize}
In other words, any $p$-nilpotent element of $\rad(\mathfrak{g})$ is the image of a $p$-nilpotent element of $\rad(\mathfrak{h}) = \rad_p(\mathfrak{h}) = \radu(H)$ according to \cite[Lemma 2.13]{JEA1} (and \cite[Remark 2.10]{JEA1} for the characteristic $2$ case).

By assumption, the image of $\radu(H) = \rad_p(\mathfrak{h})$ is an ideal of $\mathfrak{g}$. It is a $p$-nil $p$-ideal of $\mathfrak{g}$ because $\Lie(\iota)$ is a $p$-morphism of restricted $p$-Lie algebras. In other words the inclusion $\Lie(\iota)(\radu(H)) \subseteq \rad_p(\mathfrak{g})$ holds true.

To summarise, we have shown the following inclusions \[\{x \in \rad(\mathfrak{g}) \mid \ x \text{ is } p\text{-nilpotent}\} \subseteq \Lie(\iota)(\radu(H)) \subseteq \rad_p(\mathfrak{g}),\] whence the desired equality.
\end{proof}

\begin{remark}
If $G$ is a reductive $k$-group such that $\mathfrak{g}$ is endowed with a non degenerate $G$-equivariant symmetric bilinear form, then $\mathfrak{g}$ is semisimple according to \cite[I, \S8, Corollary]{SEL}: its radical (thus its center) is trivial. In other words, the adjoint representation $\ad : \mathfrak{g} \rightarrow \mathfrak{\gl(g)}$ is faithful. Therefore, under this assumption any $\ad$-nilpotent element is $p$-nilpotent. In particular the equality of $p$-ideals $\nil(\mathfrak{h}) = \rad_p(\mathfrak{h})$ holds true for any restricted $p$-Lie subalgebra $\mathfrak{h} \subseteq \mathfrak{g}$.
\label{semisimple_rad_p_nil}
\end{remark}

\begin{lemma}
Let $k$ be an algebraically closed field of characteristic $p>0$. Let $G$ be a reductive $k$-group such that $\mathfrak{g}$ is endowed with a non degenerate $G$-equivariant symmetric bilinear form $\kappa$, and let $\mathfrak{b} \subseteq \mathfrak{g}$ be a Borel subalgebra. Then one has $\nil(\mathfrak{b}) = \mathfrak{b}^{\perp}$.
\label{nil_borel_orthogonal}
\end{lemma}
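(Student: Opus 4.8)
The plan is to fix a maximal torus $T\subseteq B$, set $\mathfrak{t}=\Lie(T)$, and exploit the weight-space decomposition $\mathfrak{g}=\mathfrak{t}\oplus\bigoplus_{\alpha\in\Phi}\mathfrak{g}_{\alpha}$ for the adjoint $T$-action, so that $\mathfrak{b}=\mathfrak{t}\oplus\bigoplus_{\alpha\in\Phi^{+}}\mathfrak{g}_{\alpha}$ and the candidate answer is $\mathfrak{n}:=\bigoplus_{\alpha\in\Phi^{+}}\mathfrak{g}_{\alpha}$, the Lie algebra of the unipotent radical of $B$. The strategy is to prove $\mathfrak{b}^{\perp}=\mathfrak{n}$ and $\nil(\mathfrak{b})=\mathfrak{n}$ separately. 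First I would record that the existence of $\kappa$ forces $\mathfrak{g}$ to be semisimple by Remark \ref{semisimple_rad_p_nil}, so its center is trivial; in particular, if $h\in\mathfrak{t}$ satisfies $\alpha(h)=0$ for every $\alpha\in\Phi$ then $h\in\mathfrak{z_g}=0$.

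The first key step is the orthogonality of weight spaces. Since $\kappa$ is $G$-equivariant it is $T$-equivariant, so for $x\in\mathfrak{g}_{\alpha}$, $y\in\mathfrak{g}_{\beta}$ and $t\in T(k)$ one has $\kappa(\Ad(t)x,\Ad(t)y)=(\alpha+\beta)(t)\,\kappa(x,y)=\kappa(x,y)$, whence $((\alpha+\beta)(t)-1)\,\kappa(x,y)=0$. Because $k$ is algebraically closed, $k^{\times}$ is infinite, so a nonzero character of $T$ is nontrivial on $k$-points; hence $(\alpha+\beta)(t)\neq 1$ for some $t$ unless $\alpha+\beta=0$, and therefore $\kappa(\mathfrak{g}_{\alpha},\mathfrak{g}_{\beta})=0$ whenever $\alpha+\beta\neq 0$ (with the convention $\mathfrak{g}_{0}=\mathfrak{t}$). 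I would deliberately use this character argument rather than differentiating $\kappa$, precisely to sidestep the possible vanishing of the differentials of roots in characteristic $p$; this is the point of the proof that needs genuine care.

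Granting the orthogonality, the computation of $\mathfrak{b}^{\perp}$ reduces to a dimension count. For $z\in\mathfrak{n}$ and $b=h+\sum_{\alpha>0}b_{\alpha}\in\mathfrak{b}$, every weight occurring in $z$ is strictly positive while every weight in $b$ is $\geq 0$, so no two sum to $0$ and $\kappa(z,b)=0$; thus $\mathfrak{n}\subseteq\mathfrak{b}^{\perp}$. Since $\kappa$ is non-degenerate, $\dim\mathfrak{b}^{\perp}=\dim\mathfrak{g}-\dim\mathfrak{b}=\abs{\Phi}-\abs{\Phi^{+}}=\abs{\Phi^{+}}=\dim\mathfrak{n}$, so the inclusion is an equality and $\mathfrak{b}^{\perp}=\mathfrak{n}$.

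It then remains to identify $\nil(\mathfrak{b})=\mathfrak{n}$. That $\mathfrak{n}$ is a nilpotent ideal of $\mathfrak{b}$ is standard (brackets add weights and strictly raise heights until one leaves $\Phi^{+}$). For maximality, any ideal $\mathfrak{m}$ of $\mathfrak{b}$ is an $\ad(\mathfrak{t})$-submodule, hence graded: $\mathfrak{m}=(\mathfrak{m}\cap\mathfrak{t})\oplus\bigoplus_{\alpha}(\mathfrak{m}\cap\mathfrak{g}_{\alpha})$. If $\mathfrak{m}$ is nilpotent with $\mathfrak{m}\cap\mathfrak{t}\neq 0$, choose $0\neq h\in\mathfrak{m}\cap\mathfrak{t}$; triviality of the center yields $\beta\in\Phi$ with $\beta(h)\neq 0$, and replacing $\beta$ by $-\beta$ if needed we may take $\beta\in\Phi^{+}$. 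Then $[\mathfrak{g}_{\beta},h]=\mathfrak{g}_{\beta}\subseteq\mathfrak{m}$, so $\ad(h)$ has the nonzero eigenvalue $\beta(h)$ on $\mathfrak{g}_{\beta}\subseteq\mathfrak{m}$, contradicting that every element of a nilpotent Lie algebra is $\ad$-nilpotent. Hence $\mathfrak{m}\cap\mathfrak{t}=0$, i.e. $\mathfrak{m}\subseteq\mathfrak{n}$, which gives $\nil(\mathfrak{b})=\mathfrak{n}=\mathfrak{b}^{\perp}$. The only substantive obstacles are the characteristic-$p$ orthogonality step above and the reliance on the trivial center; the rest is formal.
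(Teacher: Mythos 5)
Your argument is correct in substance but follows a genuinely different, more self-contained route than the paper's. The paper obtains the inclusion $\nil(\mathfrak{b})\subseteq\mathfrak{b}^{\perp}$ directly from Bourbaki (a nilpotent ideal is orthogonal to the whole algebra for an invariant form) and identifies $\nil(\mathfrak{b})$ with $\bigoplus_{\alpha\in\Phi^{+}}\mathfrak{g}_{\alpha}$ by passing through the group: it invokes the lemmas of \cite{Jea1} to write $\nil(\mathfrak{b})=\rad_p(\mathfrak{b})=\Lie(\Rad_U(B))$, the needed triviality of $\mathfrak{z_b}$ being deduced, much as you deduce that of $\mathfrak{z_g}$, from semisimplicity together with $Z_B\subseteq Z_G$; the reverse inclusion $\mathfrak{b}^{\perp}\subseteq\nil(\mathfrak{b})$ then follows from the same weight-space orthogonality you establish. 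You instead work entirely inside $\mathfrak{g}$: orthogonality of root spaces via $T(k)$-equivariance (correctly avoiding differentials of characters), computation of $\mathfrak{b}^{\perp}$ by a dimension count, and a direct maximality argument for the nilradical. Your route is independent of the $p$-radical/unipotent-radical machinery of \cite{Jea1}; the paper's route has the advantage that the identification $\nil(\mathfrak{b})=\Lie(\Rad_U(B))$ is the form reused elsewhere in the text.

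One step needs repair, and it is precisely the pitfall you took care to dodge in the orthogonality argument. You assert that an ideal $\mathfrak{m}\subseteq\mathfrak{b}$ is an $\ad(\mathfrak{t})$-submodule, \emph{hence} graded as $(\mathfrak{m}\cap\mathfrak{t})\oplus\bigoplus_{\alpha}(\mathfrak{m}\cap\mathfrak{g}_{\alpha})$. In characteristic $p$ the $\ad(\mathfrak{t})$-eigenspace decomposition of $\mathfrak{b}$ can be strictly coarser than the root-space decomposition: distinct roots may have equal differentials (when $\alpha-\beta\in pX^{*}(T)$), and a root may have vanishing differential, in which case $\mathfrak{g}_{\alpha}$ lies in the same $\ad(\mathfrak{t})$-weight space as $\mathfrak{t}$ and the dichotomy ``$\mathfrak{m}\cap\mathfrak{t}=0$, i.e.\ $\mathfrak{m}\subseteq\mathfrak{n}$'' is not justified. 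The fix is the same device you used for $\kappa$: the nilradical is a characteristic ideal of $\mathfrak{b}$, hence stable under $\Ad(t)$ for every $t\in T(k)$; as $T$ is a split torus over an infinite field, a $T(k)$-stable subspace is a sum of $T$-weight spaces, and the $T$-weights $0$ and $\alpha\in\Phi^{+}$ occurring in $\mathfrak{b}$ are pairwise distinct as characters. This gives the root-space grading of $\nil(\mathfrak{b})$, after which your contradiction argument (extract $0\neq h\in\nil(\mathfrak{b})\cap\mathfrak{t}$, find $\beta\in\Phi^{+}$ with $(\diff\beta)(h)\neq0$ using triviality of the center, deduce $\mathfrak{g}_{\beta}\subseteq\nil(\mathfrak{b})$ and that $\ad(h)$ is not nilpotent there) goes through verbatim.
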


\begin{proof}
The inclusion $\nil(\mathfrak{b})\subseteq \mathfrak{b}^{\perp}$ is clear according to \cite[\S 4, n\degree 4, Proposition 6]{BOU1}.

Let us thus show the converse inclusion. According to \cite[Remark 2.14]{JEA1} the $p$-radical of $\mathfrak{b}$ is the Lie algebra of the unipotent radical of $B$ (Table \ref{Hypothèses_sur_la caractéristiqu_ pour_un_groupe_simple} allows to check that requiring the existence of $\kappa$ leads to avoid pathological cases). According to the third point of \cite[Lemma 2.5]{JEA1} this $p$-radical is the nilradical of $\mathfrak{b}$ if and only if $\mathfrak{z_b} \subseteq \mathfrak{b}$. This inclusion holds true here: indeed, we assumed that $\mathfrak{g}$ was endowed with a non degenerate $G$-equivariant symmetric bilinear form $\kappa$. The Lie algebra $\mathfrak{g}$ is therefore semisimple (see \cite[I, \S8, Corollary]{SEL}). Thus, in particular, its center is trivial (see \cite[Corollary 5.6]{FS}). Then, by making use of the inclusions $Z^0_G \subseteq Z_B \subseteq Z_G$ (see for instance \cite[Corollary 11.11]{BORlag}), one obtains by derivation that the Lie algebra $\mathfrak{z_b}$ is trivial. This is so because $\Lie(Z_B) = \mathfrak{z_b}$ according to \cite[Remark 6.2]{JEA1}.

To summarise, we have shown that $\nil(\mathfrak{b}) = \rad_p(\mathfrak{b}) = \Lie(\Rad_U(B)) = \bigoplus_{\alpha \in \Phi^+} \mathfrak{g}^{\alpha}$, where:
\begin{itemize}
\item we denote by $\Phi^+$ a positive root system associated to $B$. 
\item the first equality comes from \cite[Lemma 2.5]{JEA1} iii), 
\item the second equality comes from \cite[Remark 2.14]{JEA1}.
\end{itemize} 
Moreover, as $\mathfrak{g}^{\alpha}$ is orthogonal to $\mathfrak{g}^{\mu}$ as soon as $\mu \neq - \alpha$, one necessarily has $\mathfrak{b}^{\perp} \subseteq \nil(\mathfrak{b})$, whence the desired equality. 
\end{proof}

\section{First steps}
\label{sections_premiers_resultats}
Unsurprisingly under some assumptions that allow to reproduce the characteristic $0$ framework, an analogue of Morozov's Theorem can be easily shown to hold true in characteristic $p>0$. Especially the characteristic $p$ has to be quite big. These results are already instructive as they underline obstructions specific to the positive characteristic setting. 

	\subsection{When all the normalisers are smooth}
When $k$ is an algebraically closed field of characteristic $0$ or $p>0$ really high, Statement \ref{analogue_morozov} turns out to be true as it is an immediate application of Veisfeiler--Borel--Tits Theorem (see \cite[corollaire 3.2]{TB}). More precisely: 

\begin{proposition}
Let $k$ be an algebraically closed field and $G$ be a reductive $k$-group. We denote by $d$ the dimension of a minimal faithful representation of $G$. Assume that $k$ is of characteristic $0$ or $p > 2^{2d}$. Finally, let $\mathfrak{u}\subset \mathfrak{g}$ be a restricted $p$-nil $p$-subalgebra such that $\mathfrak{u}=\rad_p(N_{\mathfrak{g}}(\mathfrak{u}))$. Then $N_{\mathfrak{g}}(\mathfrak{u})$ is a parabolic Lie subalgebra of $\mathfrak{g}$.
\label{morozov_Herpel_Stewart}
\end{proposition}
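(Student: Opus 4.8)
The plan is to reduce the statement to the group-theoretic theorem of Veisfeiler and Borel--Tits, using the smoothness of $N_G(\mathfrak{u})$ to move freely between the group and its Lie algebra. First I would dispose of the characteristic $0$ case, in which the conclusion is exactly the classical theorem of Morozov (see \cite[VIII, \S 10, Corollaire 2]{BOU}) and normalisers are automatically smooth by Cartier's theorem. So assume $p>2^{2d}$. By Lemma \ref{borne_p_grand} one has $p>2^{2d}\ge 2\h(G)-2$, hence $p>\h(G)$; in particular $p$ exceeds $\rg(G)+1$ and every bad or torsion prime of a reductive group (all of which lie in $\{2,3,5\}$). Consequently $p$ is not of torsion for $G$, and $p$ is really good, hence separably good, for $G$ and for every reductive subquotient of $G$.

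Next I would extract from the hypothesis a genuine unipotent group carrying $\mathfrak{u}$. Since $p>2^{2d}$, Lemma \ref{reminder_normaliser}(1) guarantees that $N_G(\mathfrak{u})$ is smooth; writing $H:=N_G(\mathfrak{u})^0$, this is a smooth connected $k$-group with $\Lie(H)=N_{\mathfrak{g}}(\mathfrak{u})$ (the Lie algebra of the scheme-theoretic normaliser being the infinitesimal normaliser). Its reductive quotient $\bar H:=H/\Rad_U(H)$ has $\rg(\bar H)\le\rg(G)$, so any type-$A$ factor has rank $<p-1$; by the previous paragraph $p$ is separably good for $\bar H$, and Lemma \ref{reminder_rad}(2) applies. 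It yields
\[
\Lie(\Rad_U(H))=\rad_p(\Lie(H))=\{\,x\in\rad(N_{\mathfrak{g}}(\mathfrak{u}))\mid x\text{ is }p\text{-nilpotent}\,\}=\mathfrak{u},
\]
the last equality being precisely the hypothesis on $\mathfrak{u}$. Thus $V:=\Rad_U(H)$ is a unipotent smooth connected subgroup of $G$ with $\Lie(V)=\mathfrak{u}$.

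I would then identify $H$ with the smooth connected normaliser of $V$. On the one hand $V$ is characteristic in $H$, so $H\subseteq N_G(V)$, and $H$ being connected, $H\subseteq N_G(V)^0$. On the other hand, $V$ being smooth, Lemma \ref{reminder_normaliser}(2a) gives $N_G(V)\subseteq N_G(\Lie(V))=N_G(\mathfrak{u})$, whence $N_G(V)^0\subseteq H$. Therefore $N_G(V)^0=H$, and consequently $\Rad_U(N_G(V)^0)=\Rad_U(H)=V$: the Veisfeiler--Borel--Tits tower of smooth connected normalisers attached to $V$ is already stationary at its first step. As $k$ is algebraically closed, hence perfect, $V$ embeds into the unipotent radical of a parabolic subgroup, so the theorem of Veisfeiler and Borel--Tits (\cite{WEI}, \cite[Corollaire 3.2]{TB}) applies and shows that the limit object $P_G(V)=N_G(V)^0=H$ is a parabolic subgroup of $G$. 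Taking Lie algebras and using smoothness once more, $N_{\mathfrak{g}}(\mathfrak{u})=\Lie(H)$ is the Lie algebra of a parabolic subgroup, i.e. a parabolic subalgebra, as claimed.

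I expect the crucial input --- and the reason the hypothesis here is the strong bound $p>2^{2d}$ rather than merely $p>\h(G)$ --- to be the smoothness of $N_G(\mathfrak{u})$ furnished by \cite[Theorem A]{HS}: it is exactly what lets me replace the Lie-theoretic datum $N_{\mathfrak{g}}(\mathfrak{u})$ by the honest algebraic group $H$ and feed $V=\Rad_U(H)$ into the group-level Borel--Tits machinery. The only remaining points needing a little care are the verification that $p$ is separably good for $\bar H$ (immediate from the enormous size of $p$) and the two inclusions giving $N_G(V)^0=H$; after that the argument is a one-step stabilisation of the normaliser tower.
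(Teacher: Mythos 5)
Your argument is correct and follows the same overall strategy as the paper's proof --- smoothness of $N_G(\mathfrak{u})$ via \cite[Theorem A]{HS}, production of a unipotent smooth connected subgroup that is the unipotent radical of its own smooth connected normaliser, then the Veisfeiler--Borel--Tits theorem --- but you handle the middle step differently. The paper first integrates $\mathfrak{u}$ into a unipotent smooth connected subgroup $U$ via the truncated exponential and invokes the compatibility of $\exp$ with $\Ad$ (available since $p > 2^{2d} \geq 2\h(G)-2$) to identify $N_G(U)$ with $N_G(\mathfrak{u})$; it then sandwiches $\mathfrak{u} \subseteq \mathfrak{v} \subseteq \mathfrak{u}$, where $\mathfrak{v} = \Lie(\Rad_U(N_G(U)^0))$, the first inclusion coming from $U \subseteq \Rad_U(N_G(U)^0)$ and the second from the hypothesis that $\mathfrak{u}$ exhausts the $p$-nilpotent part of $\rad(N_{\mathfrak{g}}(\mathfrak{u}))$. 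You skip the integration entirely: you set $V := \Rad_U(N_G(\mathfrak{u})^0)$ and identify $\Lie(V)$ with $\mathfrak{u}$ in one stroke via Lemma \ref{reminder_rad}(2), whose hypothesis (separably good characteristic for the reductive quotient of the smooth normaliser) you correctly verify from the size of $p$ relative to the ranks involved. This buys a shorter proof that never mentions the exponential or its compatibility with the adjoint representation; the cost is reliance on the structural identification $\rad_p(\mathfrak{h}) = \Lie(\Rad_U(H)) = \{p\text{-nilpotent elements of }\rad(\mathfrak{h})\}$, itself a nontrivial input. Two minor remarks: your parenthetical that torsion primes of a reductive group all lie in $\{2,3,5\}$ does not match the paper's notion of torsion (which includes primes dividing the order of the fundamental group, unbounded in type $A$), though this is harmless here since $p$ exceeds every rank in sight; and, like the paper, you leave implicit the degenerate case $\mathfrak{u}=0$, where $V$ is trivial and $N_{\mathfrak{g}}(\mathfrak{u}) = \mathfrak{g} = \Lie(G)$ is the (improper) parabolic subalgebra.
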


\begin{remark}
When $G = \GL_n$ the assumptions of Proposition \ref{morozov_Herpel_Stewart} amount to require $p> 2^{2n}$.
\end{remark}

\begin{proof}
Assumptions of the proposition warrant the existence of an exponential map, allowing:
\begin{itemize}
	\item to integrate the nil Lie subalgebra $\mathfrak{u}$ into a unipotent smooth connected subgroup $U \subseteq G$ such that $\Lie(U) = \mathfrak{u}$,
	\item to make sure that this integration is compatible with the adjoint representation, implying in particular that the normalisers $N_G(U)$ and $N_G(\mathfrak{u})$ are the same (according to \cite[Lemma 4.3]{HS}).	 
\end{itemize}  
\noindent This is detailed:
 \begin{itemize}
		\item for instance in \cite[II, \S6, Corollary 3.4]{DG} when $k$ is of characteristic $0$,
		\item in \cite[\S 6]{BDP} and \cite[Proposition 3.1]{JEA1} when $p>2^{2d}$, for the existence of an integration for $\mathfrak{u}$ (because $p>2\h(G)-2 \geq \h(G)$ according to Lemma \ref{borne_p_grand}), and in \cite[Lecture 4, Theorem 5]{SER} and \cite[\S 4.6]{BDP} for the compatibility between the truncated exponential map and the adjoint representation (once again one can apply these results because $p>2\h(G)-2$). For this last point see also Remark \ref{rem_car_0_ou_grande} below.
	\end{itemize}
Let $V$ be the unipotent radical of $N_G(U)^0$. Note that it is well defined because $N_G(U)$ is smooth. Indeed one has $N_G(U)= N_G(\mathfrak{u})$, and the latter is smooth under the assumptions of the proposition. Indeed, this is
\begin{itemize}
\item well known in characteristic $0$ according to P. Cartier' theorem which states that every affine group over a field of characteristic zero is smooth, 
\item ensured by \cite[Theorem A]{HS} when $p> 2^{2d}$.
\end{itemize}

The inclusion $U \subseteq V$ is clear because $U$ is a unipotent, smooth, connected normal subgroup of $N_G(U)$. So one has $\mathfrak{u} \subseteq \mathfrak{v}$. Moreover the equalities of Lie algebras $\Lie(N_G(U)) = \Lie(N_G(\mathfrak{u})) = N_{\mathfrak{g}}(\mathfrak{u})$ are satisfied according to Lemma \ref{reminder_normaliser}. As one has $V:= \Rad_U(N_G(U)^0_{\red})$, the subalgebra $\mathfrak{v}:= \Lie(V)$ is a $p$-nil $p$-ideal of $\mathfrak{g}$ (see \cite[Lemma 2.12]{JEA1}). By assumption $\mathfrak{u}$ is the set of all $p$-nilpotent elements of $\rad(N_{\mathfrak{g}}(\mathfrak{u}))$, whence the inclusion $\mathfrak{v} \subseteq \mathfrak{u}$. So we have shown that $\mathfrak{u} = \mathfrak{v}$. Moreover, as $U$ and $V$ are smooth connected subgroups of $G$, this last equality of Lie algebras allows to conclude that $U=V$ (see \cite[II, \S5, 5.5]{DG}). To summarise, the unipotent smooth and connected subgroup $U = V$ is the unipotent radical of the connected  component of its smooth normaliser. According to Veisfeiler--Borel--Tits Theorem (\cite{WEI} and \cite[Corollary 3.2]{TB}), the latter is thus a parabolic subgroup of $G$ and $\Lie(N_G(U))= N_{\mathfrak{g}}(\mathfrak{u})$ is therefore a parabolic subalgebra of $\mathfrak{g}$.
\end{proof}

\begin{remark}
When $\h(G)<p\leq 2^{2d}$, even if an exponential map still exists and allows to integrate the restricted $p$-nil $p$-subalgebra $\mathfrak{u}$ into a unipotent smooth connected subgroup $U \subset G$, none of the steps arising in the previous proof is immediate. More precisely, the compatibility between the exponential map and the adjoint representation does no longer hold true (see for example \cite[Lemma 4.3]{HS}, this is a consequence of \cite[4.1.7]{SER1}), neither does the smoothness of normalisers (see \cite[Lemma 11]{HS}). 
\end{remark}

	\subsection{Conditional statements when the Killing form is non degenerate}
A careful reading of the proof of V. Morozov's Theorem provided in \cite[VIII, \S10 Corollaire 2]{BOU} leads to obtain the following conditional statement:

\begin{theorem}
Let $k$ be an algebraically closed field of characteristic $p>0$ and let $G$ be a reductive $k$-group. Assume that $p>\h(G)$ and that $\mathfrak{g}$ is endowed with a non degenerate $G$-equivariant symmetric bilinear form $\kappa$. Let $\mathfrak{u}$ be a subalgebra of $\mathfrak{g}$ such that:
\begin{enumerate}
\item the Lie algebra $\mathfrak{u}$ is the $p$-radical of $N_{\mathfrak{g}}(\mathfrak{u})$, 
\item the orthogonal of $\mathfrak{u}$ for the form $\kappa$ is a subalgebra of $\mathfrak{g}$.
\end{enumerate} 
Then $N_g(\mathfrak{u})$ is a parabolic subalgebra of $\mathfrak{g}$.
\label{th_si_killing}
\end{theorem}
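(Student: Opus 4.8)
The plan is to follow the orthogonality bookkeeping underlying the classical proof of \cite[VIII, \S10]{BOU}, transported to $\mathfrak{g}$ by means of the form $\kappa$. Since $\kappa$ is non-degenerate and $G$-equivariant, the Lie algebra $\mathfrak{g}$ is semisimple (see \cite[I, \S8, Corollary]{SEL}), so parabolic subalgebras are governed by the root-space description of Section~\ref{ssalg_parab}. The first and most mechanical step is the \emph{orthogonality identity}
\[ N_{\mathfrak{g}}(\mathfrak{u}) = [\mathfrak{u},\mathfrak{u}^{\perp}]^{\perp}. \]
Indeed, for $x \in \mathfrak{g}$ one has $x \in N_{\mathfrak{g}}(\mathfrak{u})$ if and only if $[x,\mathfrak{u}] \subseteq \mathfrak{u} = (\mathfrak{u}^{\perp})^{\perp}$, and by the associativity of $\kappa$ this holds exactly when $\kappa(x, [\mathfrak{u},\mathfrak{u}^{\perp}]) = 0$. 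Non-degeneracy then yields the identity, and in particular $N_{\mathfrak{g}}(\mathfrak{u})^{\perp} = [\mathfrak{u},\mathfrak{u}^{\perp}]$.

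Next I would show that the subalgebra $\mathfrak{u}^{\perp}$ (assumption (2)) is parabolic. By assumption (1) the algebra $\mathfrak{u}$ is the set of $p$-nilpotent elements of $\rad(N_{\mathfrak{g}}(\mathfrak{u}))$, hence a $p$-nil $p$-subalgebra; as $p > \h(G)$ the characteristic is very good, thus not of torsion, and Corollary~\ref{corollaire_LMT} places $\mathfrak{u}$ inside $\nil(\mathfrak{b})$ for some Borel subalgebra $\mathfrak{b} \subseteq \mathfrak{g}$. Lemma~\ref{nil_borel_orthogonal} gives $\nil(\mathfrak{b}) = \mathfrak{b}^{\perp}$, so $\mathfrak{u} \subseteq \mathfrak{b}^{\perp}$ and, taking orthogonals, $\mathfrak{b} \subseteq \mathfrak{u}^{\perp}$. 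Thus $\mathfrak{u}^{\perp}$ is a subalgebra containing a Borel subalgebra, and since $p > \h(G)$ forces $p \neq 2,3$ (the only borderline case, $\h(G) = 2$, concerns $A_1$-factors, covered by the Remark following the Proposition of Section~\ref{ssalg_parab}), that Proposition shows $\mathfrak{u}^{\perp}$ is parabolic.

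It remains to identify $N_{\mathfrak{g}}(\mathfrak{u})$ with $\mathfrak{u}^{\perp}$. Writing $\mathfrak{p} := \mathfrak{u}^{\perp}$, the computation of Lemma~\ref{nil_borel_orthogonal} extends to the parabolic case and identifies $\mathfrak{p}^{\perp} = (\mathfrak{u}^{\perp})^{\perp} = \mathfrak{u}$ with the nilradical $\nil(\mathfrak{p})$, a sum of root spaces $\bigoplus_{\alpha} \mathfrak{g}_{\alpha}$. Feeding this into the orthogonality identity gives $N_{\mathfrak{g}}(\mathfrak{u}) = [\mathfrak{u}, \mathfrak{p}]^{\perp}$, so it suffices to prove $[\mathfrak{u},\mathfrak{p}] = \mathfrak{u}$; the inclusion $[\mathfrak{p},\nil(\mathfrak{p})] \subseteq \nil(\mathfrak{p})$ is the ideal property, while the reverse inclusion follows because a maximal torus $\mathfrak{t} \subseteq \mathfrak{p}$ acts invertibly on each root space $\mathfrak{g}_{\alpha}$ (here $p > \h(G)$ guarantees $\diff \alpha \neq 0$), whence $[\mathfrak{t}, \nil(\mathfrak{p})] = \nil(\mathfrak{p})$. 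Therefore $N_{\mathfrak{g}}(\mathfrak{u}) = \mathfrak{u}^{\perp}$ is parabolic, as claimed.

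The main obstacle, and the place where positive characteristic genuinely intervenes, is the final identification $[\mathfrak{u},\mathfrak{u}^{\perp}] = \mathfrak{u}$: both the torus-action argument and the passage $\nil(\mathfrak{p}) = \mathfrak{p}^{\perp}$ rely on $p$ being large enough that no root degenerates modulo $p$, which is exactly what $p > \h(G)$ secures. One should also verify the small-rank exception $\h(G) = 2$ in order to stay within the hypotheses $p \neq 2,3$ of the parabolicity criterion; alternatively, once $\mathfrak{u}$ is recognised as the nilradical of the parabolic subalgebra $\mathfrak{u}^{\perp}$, one may invoke Theorem~\ref{parab_associé} directly to conclude.
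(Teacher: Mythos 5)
Your proof is correct, and it reaches the paper's own pivot --- the identity $N_{\mathfrak{g}}(\mathfrak{u})=\mathfrak{u}^{\perp}$ --- by a genuinely different and in fact more elementary route. The paper first integrates $\mathfrak{u}$ into a unipotent smooth connected subgroup $U$, runs the tower of smooth connected normalisers, and invokes the Veisfeiler--Borel--Tits theorem to produce a parabolic subgroup $P_G(U)$ and a Borel subgroup $B$ with $\mathfrak{u}\subseteq\radu(B)$; it then proves $N_{\mathfrak{g}}(\mathfrak{u})=\mathfrak{u}^{\perp}$ as a standalone fact (Lemma~\ref{orthogonal}: the inclusion $\mathfrak{u}^{\perp}\subseteq N_{\mathfrak{g}}(\mathfrak{u})$ uses assumption (2) exactly as in your first step, while $N_{\mathfrak{g}}(\mathfrak{u})\subseteq\mathfrak{u}^{\perp}$ comes from the Bourbaki fact that a nilpotent ideal is orthogonal to the whole algebra), and finally transports $\mathfrak{b}\subseteq\mathfrak{u}^{\perp}$ into $N_{\mathfrak{g}}(\mathfrak{u})$ and applies the ``contains a Borel subalgebra'' criterion of Section~\ref{ssalg_parab}. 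You stay entirely at the Lie-algebra level: Corollary~\ref{corollaire_LMT} replaces the integration-plus-Borel--Tits detour to embed $\mathfrak{u}$ into $\radu(B)$, you establish the parabolicity of $\mathfrak{u}^{\perp}$ \emph{first}, and you then deduce $N_{\mathfrak{g}}(\mathfrak{u})=[\mathfrak{u},\mathfrak{u}^{\perp}]^{\perp}=\mathfrak{u}^{\perp}$ from the root-space structure of the parabolic subalgebra (the torus acting invertibly on each $\mathfrak{g}_{\alpha}$ since $\diff\alpha\neq 0$ for $p>2$). This reversal of the order of deductions buys a purely infinitesimal argument and the extra information that $\mathfrak{u}=\nil(\mathfrak{u}^{\perp})$ is exactly the nilradical of the resulting parabolic subalgebra; what it costs is the short root-space computation $[\mathfrak{u},\mathfrak{u}^{\perp}]=\mathfrak{u}$, which the paper's appeal to Bourbaki makes unnecessary. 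Your handling of the boundary case $\h(G)=2$, $p=3$ matches the paper's. Only your closing alternative --- invoking Theorem~\ref{parab_associé} to finish --- should be used with care: that theorem is proved later and independently, so the step is not circular, but it defeats the purpose of this statement, which is meant as a self-contained conditional result preceding Theorem~\ref{parab_associé}.
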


The following lemma will be useful to prove Theorem \ref{th_si_killing}:
	\begin{lemma}
	Let $\mathfrak{h}$ and $\mathfrak{u}$ be two restricted $p$-Lie subalgebras of $\mathfrak{g}$ such that:
	\begin{itemize} 
	\item the subalgebra $\mathfrak{u}$ is the $p$-radical of $\mathfrak{h}$,
	 	\item the Killing form over $\mathfrak{g}$, denoted by $\kappa$, is non degenerate,
	 	\item the subalgebra $\mathfrak{h}$ is the normaliser of $\mathfrak{u}$ in $\mathfrak{g}$, namely one has $\mathfrak{h} = N_{\mathfrak{g}}(\mathfrak{u})$,
	 	\item the orthogonal of $\mathfrak{u}$ for $\kappa$ is a subalgebra of $\mathfrak{g}$, 
	 	\end{itemize}
then $\mathfrak{u}$ is the orthogonal of $\mathfrak{h}$ for $\kappa$.
	\label{orthogonal}
	\end{lemma}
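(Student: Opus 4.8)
The plan is to establish the two inclusions $\mathfrak{u}\subseteq\mathfrak{h}^{\perp}$ and $\mathfrak{h}^{\perp}\subseteq\mathfrak{u}$ separately, using only that $\kappa$ is non-degenerate (so that $\mathfrak{u}^{\perp\perp}=\mathfrak{u}$ and $\mathfrak{h}^{\perp\perp}=\mathfrak{h}$), that it is $\ad$-invariant (the identity $\kappa([a,b],c)=\kappa(a,[b,c])$), and that it is the Killing form, i.e. $\kappa(a,b)=\tr_{\mathfrak{g}}(\ad a\,\ad b)$. Note first that the existence of the non-degenerate $\kappa$ forces $\mathfrak{g}$ to be semisimple, so $\ad_{\mathfrak{g}}$ is injective and the trace formula for $\kappa$ is genuinely available.

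First I would prove that $\kappa(\mathfrak{u},\mathfrak{h})=0$, which is exactly $\mathfrak{u}\subseteq\mathfrak{h}^{\perp}$. Since $\mathfrak{u}$ is the set of $p$-nilpotent elements of $\rad(\mathfrak{h})$, every $x\in\mathfrak{u}$ satisfies $x^{[p]^{m}}=0$ for some $m$; as $\ad(x^{[p]})=(\ad x)^{p}$ one gets $(\ad_{\mathfrak{g}}x)^{p^{m}}=\ad(x^{[p]^{m}})=0$, so $\ad_{\mathfrak{g}}\mathfrak{u}$ is a Lie algebra of nilpotent endomorphisms of $\mathfrak{g}$. Moreover $\ad_{\mathfrak{g}}\mathfrak{u}$ is an ideal of $\ad_{\mathfrak{g}}\mathfrak{h}$, because $\mathfrak{h}=N_{\mathfrak{g}}(\mathfrak{u})$ gives $[\ad h,\ad\mathfrak{u}]=\ad[h,\mathfrak{u}]\subseteq\ad\mathfrak{u}$. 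I would then invoke the standard characteristic-free trace argument: by Engel's theorem the associative subalgebra of $\End(\mathfrak{g})$ generated by $\ad\mathfrak{u}$ is nilpotent, its powers define a filtration of $\mathfrak{g}$ that is preserved by any operator normalising $\ad\mathfrak{u}$, while each $\ad x$ with $x\in\mathfrak{u}$ strictly lowers it; hence $\ad x\circ\ad h$ acts by $0$ on the associated graded, so $\kappa(x,h)=\tr_{\mathfrak{g}}(\ad x\,\ad h)=0$ for all $x\in\mathfrak{u}$ and $h\in\mathfrak{h}$.

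Next I would prove the reverse inclusion by showing $\mathfrak{u}^{\perp}\subseteq N_{\mathfrak{g}}(\mathfrak{u})=\mathfrak{h}$, which dualises to $\mathfrak{h}^{\perp}\subseteq\mathfrak{u}^{\perp\perp}=\mathfrak{u}$. This is where the standing hypothesis that $\mathfrak{u}^{\perp}$ is a subalgebra enters. Fix $z\in\mathfrak{u}^{\perp}$ and $x\in\mathfrak{u}$; for every $w\in\mathfrak{u}^{\perp}$, invariance gives $\kappa([z,x],w)=-\kappa(x,[z,w])$, and $[z,w]\in\mathfrak{u}^{\perp}$ since $\mathfrak{u}^{\perp}$ is a subalgebra, so $\kappa(x,[z,w])=0$ by definition of $\mathfrak{u}^{\perp}$. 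Thus $[z,x]$ is orthogonal to all of $\mathfrak{u}^{\perp}$, that is $[z,x]\in(\mathfrak{u}^{\perp})^{\perp}=\mathfrak{u}$; hence $z$ normalises $\mathfrak{u}$ and $z\in\mathfrak{h}$. Combining the two steps gives $\mathfrak{u}\subseteq\mathfrak{h}^{\perp}$ and $\mathfrak{h}^{\perp}\subseteq\mathfrak{u}$, whence $\mathfrak{u}=\mathfrak{h}^{\perp}$.

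The third step is a purely formal manipulation of the invariant form and should cause no trouble; the real work is concentrated in the trace vanishing. The delicate point is the passage from $p$-nilpotency of the elements of $\mathfrak{u}$ to nilpotency of the endomorphisms $\ad_{\mathfrak{g}}x$, together with the verification that the Engel-type trace computation survives in characteristic $p$ — which it does, since neither Engel's theorem nor the graded-trace argument uses any division. It is worth stressing that step two uses only invariance and non-degeneracy of $\kappa$, whereas step one genuinely uses that $\kappa$ is the Killing form, which is why the lemma is phrased for that specific form.
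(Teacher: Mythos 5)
Your proof is correct and follows essentially the same route as the paper's: one inclusion via the invariance of $\kappa$ together with the hypothesis that $\mathfrak{u}^{\perp}$ is a subalgebra (showing $\mathfrak{u}^{\perp}\subseteq N_{\mathfrak{g}}(\mathfrak{u})=\mathfrak{h}$), and the other by observing that $\mathfrak{u}$ is a nilpotent ideal of $\mathfrak{h}$ and hence orthogonal to $\mathfrak{h}$ for the Killing form. The only difference is that where the paper cites Bourbaki (\S 4, n$^{\circ}$4, Proposition 6) for this last orthogonality, you reprove it by the Engel filtration/graded-trace argument, which is a valid characteristic-free expansion of that reference.
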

	
	\begin{proof}
	Let $\mathfrak{q}$ be the orthogonal of $\mathfrak{u}$ for the Killing form over $\mathfrak{g}$. Let us show that $\mathfrak{q} \subseteq \mathfrak{h}$. The vanishing condition $\kappa([x,q_1],q_2) = \kappa(x, [q_1, q_2]) = 0,$ is satisfied for any pair $(q_1, q_2) \in \mathfrak{q}$ and any $x \in \mathfrak{u}$. Therefore the Lie bracket $[x,q_1]$ lies in $\mathfrak{q}^{\perp}:= (u^{\perp})^{\perp}$, where $\mathfrak{q}^{\perp}$ is the orthogonal of $\mathfrak{q}$ for $\kappa$. Hence we have obtained that:
\begin{itemize}
\item for any $q_1 \in \mathfrak{q}$ the Lie bracket $[x, q_1]$ belongs to $\mathfrak{u}$, 
\item the Lie algebra $\mathfrak{q}$ normalises $\mathfrak{u}$.
\end{itemize} 
In other words we have shown that $\mathfrak{q} \subseteq N_{\mathfrak{g}}(\mathfrak{u}) = \mathfrak{h}$.

Moreover and by assumption, the Lie algebra $\mathfrak{u}$ is an ideal of $\mathfrak{h}$ that only consists in $p$-nilpotent elements. It is therefore contained in the nilradical of $\mathfrak{h}$, and it is thus orthogonal to $\mathfrak{h}$ for the Killing form over $\mathfrak{h}$ (according to \cite[\S4, n\degree 4, Proposition 6]{BOU1}). In other words we have shown that $\mathfrak{h} \subseteq \mathfrak{q} := \mathfrak{u}^{\perp}$, hence the equality $\mathfrak{h} = \mathfrak{q}$.
	\end{proof}
	
\begin{proof}[Proof of Theorem \ref{th_si_killing}]
As $p>\h(G)$, the restricted $p$-nil $p$-subalgebra $\mathfrak{u}$ is integrable into a unipotent smooth connected subgroup $U\subset G$ (see \cite[6]{BDP} but also \cite[Proposition 3.1]{JEA1}). Let $N_0 := N_G(U)_{\red}^0$ be the smooth connected part of the normaliser of $U$ in $G$, set $V_1 := \Rad_U(N_0)$ and let us consider the tower of smooth connected normalisers of $U$ in $G$. For dimensional reasons this tower converges to a limit object $N_{\infty}$ which turns out to be the smooth connected part of the normaliser of its unipotent radical $V_{\infty}$. According to \cite[Théorème 3.2]{TB} the smooth connected normaliser $N_{\infty}$ is a parabolic subgroup of $G$, denoted by $P_G(U)$. There are two possibilities:
	\begin{enumerate}
		\item either $P_G(U) = G$, in this case $V_{\infty}=\{0\}$ is trivial, so is $U=\{0\}$, thus the vanishing of $\Lie(U) = \mathfrak{u}$ and the result follows,
		\item or $P_G(U)$ is a proper parabolic subgroup of $G$. It therefore contains a Borel subgroup $B \subseteq G$. As $k$ is algebraically closed, thus perfect, the unipotent smooth connected subgroup $V_{\infty}$ is $k$-embeddable into the unipotent radical of a Borel subgroup $B \subseteq G$ (see \cite[Corollary 3.7]{TB}). Moreover as the Lie algebras involved here are of finite dimension we have the following inclusions: 
		\[\mathfrak{u} \subseteq \mathfrak{v}_{\infty} \subseteq \radu(B) \subseteq \nil(\mathfrak{b}) \subseteq (\mathfrak{b})^{\perp}\]
\noindent where the last inclusion is ensured by \cite[\S 4 n\degree 4, Lemma]{BOU1}. As by assumption $\mathfrak{g}$ is endowed with a non degenerate form one can deduce that:
		\[(\mathfrak{b}^{\perp})^{\perp} = \mathfrak{b} \subseteq  \nil(\mathfrak{b})^{\perp} \subseteq  \mathfrak{v}_{\infty}^{\perp} \subseteq \mathfrak{u}^{\perp} = N_{\mathfrak{g}}(\mathfrak{u}),\] 
\noindent where the last equality is ensured by Lemma \ref{orthogonal} and Remark \ref{semisimple_rad_p_nil}. By assumption $p>\h(G)$, in other words $p \neq 2$ when $G$ is of type $A_1$ and $p> 3$ when $G$ is not of $A_n$-type (see Table \ref{Hypothèses_sur_la caractéristiqu_ pour_un_groupe_simple}). In this case the parabolic subalgebras of $\mathfrak{g}$ are exactly those that contain the Lie algebra of a Borel subgroup (see Section \ref{ssalg_parab}). The Lie algebra $N_{\mathfrak{g}}(\mathfrak{u})$ satisfies this condition according to what precedes because $\mathfrak{p_g}(U) \subseteq N_{\mathfrak{g}}(\mathfrak{u})$ is a parabolic Lie subalgebra of $\mathfrak{g}$, thus it contains a Borel subalgebra of $\mathfrak{g}$. So $N_{\mathfrak{g}}(\mathfrak{u})$ is indeed a parabolic subalgebra of $\mathfrak{g}$.
	\end{enumerate}
\end{proof}

\section{Hilbert--Mumford--Kempf--Rousseau method and the optimal parabolic subgroup}
\label{Kempf-Rousseau}

In what follows and unless otherwise stated, the field $k$ is algebraically closed of characteristic $p>0$ and $G$ is a reductive $k$-group. We assume that $p$ is not of torsion for $G$ (let us underline that this condition is in particular always satisfied when $p>\h(G)$ (see section \ref{hypothèses_car})). In what follows and by convention, a $k$-variety is a separated $k$-scheme of finite type. In particular a variety is not assumed to be reduced a priori. If $Y$ is a $k$-variety we denote by $\bar{Y}$ its reduced closure. 

	\subsection{Construction}
	\label{construction}
Set $X := \mathfrak{g}^{\oplus^d}$ and let $\Ad(G)^{\Delta}$ be the diagonal adjoint action of $G$ on $X$.
Let $\mathfrak{u} \subseteq \mathfrak{g} $ be a restricted $p$-nil $p$-subalgebra of dimension $d>0$ and let $\{x_1, \hdots, x_d\}$ be a basis of $\mathfrak{u}$. As explained in this section, the Hilbert--Mumford--Kempf-Rousseau method (see \cite{K}) allows us to obtain a parabolic subgroup that satisfies the following properties:
\begin{itemize}
\item it is of the form $P_G(\lambda_{\mathfrak{u}})$, where $\lambda_{\mathfrak{u}}$ is a not uniquely defined cocharacter of $G$, optimal for $\mathfrak{u}$ in a certain sense, 
\item the Lie algebra $\mathfrak{u}$ is a Lie subalgebra of $\Lie(\Rad_U(P_G(\lambda_{\mathfrak{u}}))):= \mathfrak{rad_u}(P_G(\lambda_{\mathfrak{u}}))$. 
\end{itemize}  
Set $x = (x_1, \hdots, x_n) \in X$ and $S := \left(\overline{\Ad(G)^{\Delta} x} \setminus \Ad(G)^{\Delta} x\right)_{\red} \subseteq X$. 

\begin{lemma}
The subvariety $S\subseteq X$ is closed, it is $\Ad(G)^{\Delta}$-invariant and not empty. Moreover the vector $x$ does not belong to $S$. 
\label{fond}
\end{lemma}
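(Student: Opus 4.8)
The four assertions split into routine topological facts and one substantive point, and I would dispatch them in that order. Since $G$ is an algebraic group acting on the affine variety $X$, the orbit $\Ad(G)^{\Delta} x$ is locally closed in $X$, hence open in its own closure $\overline{\Ad(G)^{\Delta} x}$. Consequently the set-theoretic difference $\overline{\Ad(G)^{\Delta} x} \setminus \Ad(G)^{\Delta} x$ is closed in the closure, hence closed in $X$; passing to the reduced structure does not alter the underlying topological space, so $S$ is closed. For invariance, both the orbit and (since $G$ acts by automorphisms of $X$) its closure are $\Ad(G)^{\Delta}$-stable, so their difference is too, and this stability descends to the reduced subscheme $S$. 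Finally $x \in \Ad(G)^{\Delta} x$ by taking $g = e$, so $x$ lies in the orbit and therefore not in its boundary $S$.

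The only delicate point is that $S \neq \emptyset$, that is, that the orbit of $x$ is not closed, and here I would exploit that $\mathfrak{u}$ is $p$-nil. Since $p$ is assumed not of torsion for $G$, Corollary \ref{corollaire_LMT} applies and furnishes a Borel subgroup $B \supseteq T$ with $\mathfrak{u} \subseteq \radu(B) = \bigoplus_{\alpha \in \Phi^+} \mathfrak{g}_{\alpha}$. I would then choose a regular dominant cocharacter $\lambda$ of $T$, so that $\langle \alpha, \lambda \rangle > 0$ for every $\alpha \in \Phi^+$; the torus then acts on each root space by $\Ad(\lambda(t))\vert_{\mathfrak{g}_{\alpha}} = t^{\langle \alpha, \lambda \rangle}$. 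Writing each basis vector as $x_i = \sum_{\alpha \in \Phi^+} x_{i,\alpha}$, the morphism $\mathbb{G}_m \to X$, $t \mapsto \Ad(\lambda(t))^{\Delta} x$ has all coordinates polynomial in $t$ with strictly positive exponents, hence extends to a morphism $\mathbb{A}^1 \to X$ sending $0$ to the origin. This exhibits $0 \in \overline{\Ad(G)^{\Delta} x}$.

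To conclude, it remains to check that $0$ is not in the orbit itself. As $\{x_1, \dots, x_d\}$ is a basis of the $d$-dimensional algebra $\mathfrak{u}$ with $d > 0$, the vector $x$ is nonzero, and since every $\Ad(g)$ is a linear automorphism of $\mathfrak{g}$ we have $\Ad(g)^{\Delta} x \neq 0$ for all $g$; hence $0 \notin \Ad(G)^{\Delta} x$. Therefore $0$ lies in $\overline{\Ad(G)^{\Delta} x} \setminus \Ad(G)^{\Delta} x$, i.e. $0 \in S$, and $S$ is non-empty. I expect the single genuine obstacle to be producing the contracting cocharacter: everything rests on invoking Corollary \ref{corollaire_LMT} to embed $\mathfrak{u}$ into the nilpotent radical of a Borel and then selecting a one-parameter subgroup whose limit drives the whole tuple $x$ to the origin. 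Once the point $0$ is located in the orbit closure but outside the orbit, non-emptiness follows at once, and the remaining topological assertions are formal.
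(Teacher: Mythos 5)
Your proof is correct and follows essentially the same route as the paper: the paper likewise treats only the non-emptiness as non-obvious and establishes it by invoking Corollary \ref{corollaire_LMT} to embed $\mathfrak{u}$ into $\radu(B)$ and then contracting with a dominant regular cocharacter to show $0$ lies in the orbit closure. Your additional explicit verifications (local closedness of the orbit, and that $0$ is not in the orbit because $x\neq 0$ and each $\Ad(g)$ is a linear automorphism) are details the paper leaves implicit, and they are accurate.
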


	\begin{proof}[Proof of Lemma \ref{fond}]
Only the emptiness of $S$ is not clear. To prove it this is enough to check that $0\in S$: according to Corollary \ref{corollaire_LMT}, there exists a Borel subgroup $B \subseteq G$ such that $\mathfrak{u}$ is a Lie subalgebra of $\radu(B)$. Let $T\subseteq B\subseteq G$ be a maximal torus. Any dominant regular cocharacter $\lambda$ of $G$ with respect to $T$ satisfies the equality $\lim_{t \to 0} \lambda(t)\cdot z = 0$ for any $z \in \radu(B)^{\oplus^d}$. In particular the following holds true \[0 \in \left(\overline{\Ad(G)^{\Delta} x} \setminus \Ad(G)^{\Delta} x\right)_{\red} \subseteq X.\]
	\end{proof}
	 
Let us denote by $\lvert X, x \rvert := \left\lbrace \lambda \in X_{*}(G) \mid \lim_{t\to 0} \Ad^{\Delta}(\lambda(t))x \text{ exists}\right\rbrace$ the attractor of $X$ (see \cite[1.4]{DriGai}).
For any $\lambda \in \lvert X, x \rvert$ set $x_0^{\lambda} := \lim_{t\to 0} \Ad^{\Delta}(\lambda(t)) x \in S$ (or simply $x_0:= x_0^{\lambda}$ when there is no ambiguity on the chosen cocharacter $\lambda$). Finally let us set
\[\alpha_{S,x}(\lambda) :=  \begin{cases}
 \text{vanishing order of } t \mapsto \Ad^{\Delta}(\lambda(t))x-x_0 : \mathbb{A}^1 \rightarrow X \text{ if } x_0 \in S,   \\
  0 \text{ otherwise.}
 \end{cases} \]
In particular $\alpha_{S,x} : X_*(G) \rightarrow \mathbb{N}$ has strictly positive values if and only if $ x_0^{\lambda} \in S$.  

As a reminder, any cocharacter $\lambda : \mathbb{G}_m \rightarrow k$ defines both a parabolic subgroup $P_G(\lambda)$ and its unipotent radical $U_G(\lambda) := \Rad_U(P_G(\lambda))$ such that for any $k$-algebra $A$ one has $P_G(\lambda)(A):= \lbrace g \in G(A) \mid \lim_{t \to 0} \lambda(t)\cdot g \text{ exists}\rbrace$ and $U_G(\lambda)(A):= \lbrace g \in P_G(\lambda)(A) \mid \lim_{t \to 0} \lambda(t)\cdot g = 1\rbrace$. The following theorem has been established by G. R. Kempf in the following more general setting in which we consider:
	\begin{itemize}
		\item a finite dimensional linear representation $(\rho,V)$ of $G$,
		\item a closed $\rho(G)^{\Delta}$-invariant subvariety $S \subset V^{\oplus d}$.
	\end{itemize}	  
It allows to associate to $\mathfrak{u}$ optimal cocharacters and the parabolic subgroup they define.
  
\begin{theorem}[Kempf, {\cite[Theorem 3.4]{K}}]
The function $\lambda \mapsto \alpha_{S,x}(\lambda)/ \left\lVert \lambda \right \rVert$ admits a maximum $B >0$ on $\lvert X, x\rvert$. Set:
\[\Delta_{S,x} = \{\lambda \in \lvert X, x \rvert \mid \alpha_{S,x}(\lambda) = B \cdot \left\lVert \lambda \right \rVert \text{ and } \lambda \text{ is indivisible}\}.\]
Such a cocharacter $\lambda \in \Delta_{S,x}$ is said to be optimal. Under these conditions:
	\begin{enumerate}
		\item the set $\Delta_{S,x}$ is non-empty,
		\item there exists a parabolic subgroup $P_G(\lambda_{\mathfrak{u}}) \subseteq G$ (denoted by $P_{S,x}$ in \cite{K}), and such that $P_G(\lambda_{\mathfrak{u}}) = P_{G}(\lambda)$ for any $\lambda \in \Delta_{S,x}$. The parabolic subgroup $P_G(\lambda_{\mathfrak{u}})$ is the optimal parabolic subgroup of $\mathfrak{u}$, 
		\item the set $\Delta_{S,x}$ is a principal homogeneous space under $\Rad_U(P_G(\lambda_{\mathfrak{u}}))(k)$, 
		\item any maximal torus of $P_G(\lambda_{\mathfrak{u}})$ contains a unique cocharacter that belongs to $\Delta_{S,x}$.
	\end{enumerate}
\label{Kempf}
\end{theorem}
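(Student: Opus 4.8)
The statement is Kempf's theorem (\cite{K}), so the plan is to reproduce the structure of his original argument, organised around the interplay between a length function $\|\cdot\|$ and the numerical invariant $\alpha_{S,x}$. First I would fix a $G$-invariant length function: for each maximal torus $T\subseteq G$ a Weyl-invariant positive definite quadratic norm on $X_*(T)_{\mathbb{R}}$, chosen so that $\|g\lambda g^{-1}\| = \|\lambda\|$ for all $g\in G(k)$; such a norm exists by spreading a single $W$-invariant inner product across all (conjugate) maximal tori. The whole analysis then reduces to one fixed torus $T$. Decomposing $X = \bigoplus_{\chi} X_{\chi}$ into $T$-weight spaces and writing $x = \sum_{\chi} x_{\chi}$, one has $\Ad^{\Delta}(\lambda(t))x = \sum_{\chi} t^{\langle \chi,\lambda\rangle}x_{\chi}$, so $\lambda\in \lvert X,x\rvert$ exactly when $\langle\chi,\lambda\rangle\geq 0$ on the support of $x$, the limit is $x_0^{\lambda}=\sum_{\langle\chi,\lambda\rangle=0}x_{\chi}$, and, whenever $x_0^{\lambda}\in S$, $\alpha_{S,x}(\lambda) = \min\{\langle\chi,\lambda\rangle : x_{\chi}\neq 0,\ \langle\chi,\lambda\rangle>0\}$. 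In particular $\alpha_{S,x}$ is a positively homogeneous, concave, piecewise-linear function with rational breakpoints on the rational cone $\lvert X,x\rvert\cap X_*(T)_{\mathbb{Q}}$.

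Second I would carry out the optimisation inside $T$. On the norm sphere $\{\|\lambda\|=1\}$ the function $\alpha_{S,x}$ is continuous and concave while the unit ball is strictly convex, so $\alpha_{S,x}(\lambda)/\|\lambda\|$ attains its maximum along a \emph{unique} ray; rationality of that ray (hence the existence of a unique indivisible integral cocharacter on it) follows from the piecewise-linearity with rational data, the optimum being cut out by rational linear (in)equalities. This gives torus-local existence and uniqueness of the optimal cocharacter. Lemma \ref{fond} furnishes $0\in S$, which forces the optimal value to be strictly positive.

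Third, and this is where the real work lies, I would globalise. Taking the supremum of $\alpha_{S,x}/\|\cdot\|$ over all maximal tori yields a common value $B>0$ (the tori are conjugate and $\|\cdot\|$ is conjugation-invariant), and the torus-local maxima realise it, proving (1) and (2). For (3) and (4) the decisive point is that any two optimal cocharacters $\lambda,\mu$ define the same parabolic. One first checks that an optimal $\lambda$ remains optimal for the induced data inside $P:=P_G(\lambda)$ and is central in the Levi $Z_G(\lambda)$; then one shows $\mu$ can be moved by an element of $\Rad_U(P)(k)$ so that its Levi is conjugated onto $Z_G(\lambda)$, whereupon the strict convexity of the second step forces the conjugate of $\mu$ to equal $\lambda$. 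Since $\Rad_U(P)(k)$ acts simply transitively on the Levi factors of $P$ and preserves $P$, this simultaneously gives $P_G(\mu)=P_G(\lambda)=:P_G(\lambda_{\mathfrak{u}})$ and exhibits $\Delta_{S,x}$ as a principal homogeneous space under $\Rad_U(P_G(\lambda_{\mathfrak{u}}))(k)$, since optimal cocharacters correspond bijectively to Levi factors. Finally (5) follows: each maximal torus $T'$ of $P$ lies in a unique Levi factor $L'$ (because $T'$ acts on $\Lie\Rad_U(P)$ without zero weights), $L'$ carries exactly one optimal cocharacter, which is central in $L'$ and hence lies in $T'$, while torus-local uniqueness rules out any second optimal cocharacter in $T'$.

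The main obstacle is precisely the uniqueness argument underlying (3)--(4): ruling out that distinct optimal cocharacters produce distinct parabolics. The crux is to conjugate two optimal cocharacters so as to share a common Levi (equivalently a common maximal torus) by a unipotent element \emph{without decreasing} $\alpha_{S,x}/\|\cdot\|$, so that strict convexity of the norm can be invoked to equate them. Keeping careful track of the behaviour of $\alpha_{S,x}$ under such conjugation---showing it is non-increasing and constant exactly along the optimal locus---is the delicate technical heart of Kempf's proof.
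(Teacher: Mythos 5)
The paper does not prove this statement at all: it is quoted verbatim from Kempf (\cite[Theorem 3.4]{K}) and used as a black box, so there is no internal proof to compare yours against. Judged on its own terms, your sketch does reproduce the architecture of Kempf's original argument (conjugation-invariant length function, reduction to a single maximal torus via the weight-space decomposition of $x$, a convexity argument for uniqueness of the optimal ray in each torus, and conjugation by elements of $\Rad_U(P)(k)$ to compare optimal cocharacters across tori), and the identification $\alpha_{S,x}(\lambda)=\min\{\langle\chi,\lambda\rangle : x_\chi\neq 0,\ \langle\chi,\lambda\rangle>0\}$ when $x_0^\lambda\in S$ is correct.

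There are, however, two genuine gaps. First, $\alpha_{S,x}$ is \emph{not} concave (nor even continuous) on the cone $\lvert X,x\rvert\cap X_*(T)_{\mathbb{Q}}$ as you assert: because the minimum is taken only over the weights with $\langle\chi,\lambda\rangle>0$, the index set varies with $\lambda$, and the function jumps when a weight pairing crosses zero (take a support with two weights and let one pairing tend to $0^+$). Since your uniqueness-of-the-optimal-ray argument rests on applying concavity to the midpoint of two maximisers, it breaks down exactly here; Kempf repairs this by decomposing the problem into finitely many ``states'' (convex rational subcones on which the index set is constant, determined by which limit points land in $S$) and optimising a genuinely concave function on each, then comparing the finitely many candidate values. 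You need that decomposition, or some substitute, before the strict-convexity step is legitimate. Second, the step you yourself flag as the crux --- that two optimal cocharacters in different tori can be conjugated by an element of $\Rad_U(P)(k)$ without decreasing $\alpha_{S,x}/\lVert\cdot\rVert$, so that they land in a common torus and the local uniqueness applies --- is only described, not proved; this is where Kempf's Lemma on the behaviour of the numerical invariant under the $P(\lambda)$-action does the real work, and without it assertions (3) and (4) are unsupported. As it stands the proposal is a faithful roadmap of Kempf's proof rather than a proof.
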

 
\begin{lemma}
Let $\mathfrak{u}$ and $P_G(\lambda_{\mathfrak{u}})$ be as in Theorem \ref{Kempf}. Let us denote by $\mathfrak{u_g(\lambda_u)} := \mathfrak{\rad_u}(P_G(\lambda_u))$. Then the inclusion $\mathfrak{u} \subseteq \mathfrak{u_g(\lambda_u)}$ holds true.
\label{unip_parab}
\end{lemma}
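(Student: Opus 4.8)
Write each basis vector in the weight spaces of the optimal cocharacter, observe that the lemma is equivalent to the single assertion that the limit of $x$ under that cocharacter is the origin, and then spend the effort on that assertion. Concretely, fix $\lambda \in \Delta_{S,x}$ and decompose $\mathfrak{g} = \bigoplus_{n \in \mathbb{Z}} \mathfrak{g}(\lambda)_n$ into its $\Ad(\lambda)$-weight spaces, so that $\Lie(P_G(\lambda)) = \bigoplus_{n \geq 0}\mathfrak{g}(\lambda)_n$ while $\mathfrak{u_g(\lambda_u)} = \Lie(\Rad_U(P_G(\lambda))) = \bigoplus_{n > 0}\mathfrak{g}(\lambda)_n$. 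Since $\lambda \in \lvert X, x\rvert$ the limit $x_0 := \lim_{t \to 0}\Ad^{\Delta}(\lambda(t))x$ exists; writing $x_i = \sum_n x_i^{(n)}$ in weight components, existence of the limit forces $x_i^{(n)} = 0$ for $n < 0$, so $\mathfrak{u} \subseteq \Lie(P_G(\lambda))$ for free. Moreover $x_0 = (x_1^{(0)},\dots,x_d^{(0)})$ is precisely the weight-zero part of $x$, so $\mathfrak{u} \subseteq \mathfrak{u_g(\lambda_u)}$ holds if and only if $x_0 = 0$. This is the reduction I would make first.

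To attack $x_0 = 0$ I would pass to the Levi $L := C_G(\lambda)$, which is reductive with $\Lie(L) = \mathfrak{g}(\lambda)_0$ and for which $p$ is still non-torsion (this is inherited by Levi subgroups). The projection $\Lie(P_G(\lambda)) \twoheadrightarrow \Lie(L)$ along $\bigoplus_{n>0}\mathfrak{g}(\lambda)_n$ is a homomorphism of restricted $p$-Lie algebras, being the differential of $P_G(\lambda) \to L$; hence the images $x_i^{(0)}$ span a $p$-nil $p$-subalgebra $\mathfrak{u}_0 \subseteq \Lie(L)$. Arguing by contradiction, suppose $\mathfrak{u}_0 \neq 0$. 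Then Corollary \ref{corollaire_LMT}, applied inside $L$, embeds $\mathfrak{u}_0$ into $\radu(B_L)$ for some Borel $B_L \subseteq L$, and a regular dominant cocharacter $\mu \in X_*(L)$ commutes with $\lambda$ and satisfies $\lim_{t\to 0}\Ad^{\Delta}(\mu(t))x_0 = 0$, exactly as $0 \in S$ was produced in the proof of Lemma \ref{fond}.

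The main obstacle is then the quantitative comparison. Combining $\lambda$ with a small positive multiple of $\mu$ produces a cocharacter $\gamma$ that drives $x$ strictly deeper into $S$, all the way to the origin, thereby raising the order of contact with $S$; what has to be checked is that after renormalising by the length $\lVert\cdot\rVert$ this combination strictly exceeds the optimal value $B$, contradicting $\lambda \in \Delta_{S,x}$. This is precisely the convexity and length estimate that underlies Kempf's Theorem \ref{Kempf}, and it is the only genuinely delicate point: one must control $\alpha_{S,x}(\gamma)$ against $\lVert\gamma\rVert$ uniformly in the auxiliary parameter. Once this is in place we get $\mathfrak{u}_0 = 0$, hence $x_0 = 0$ and $\mathfrak{u} \subseteq \bigoplus_{n>0}\mathfrak{g}(\lambda)_n = \mathfrak{u_g(\lambda_u)}$. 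I would also note the structural shortcut that sidesteps the estimate: because $\mathfrak{u}$ is $p$-nil, $x$ lies in the nullcone of the diagonal action and $\{0\}$ is the unique closed $G$-orbit in $\overline{\Ad(G)^{\Delta}x}$, so the substance of Kempf's theory in this situation is exactly that an optimal cocharacter drives $x$ into that closed orbit, giving $x_0 = 0$ directly.
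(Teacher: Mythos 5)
Your opening reduction is exactly the paper's: using the decomposition $\mathfrak{p_g(\lambda_u)} = \mathfrak{c_g(\lambda_u)} \oplus \mathfrak{u_g(\lambda_u)}$ with $\mathfrak{c_g(\lambda_u)} = \mathfrak{g}_0$ and $\mathfrak{u_g(\lambda_u)} = \bigoplus_{n>0}\mathfrak{g}_n$ (the paper quotes \cite[I, Proposition 2.1.8]{CGP} for this), the lemma amounts to saying that every $x_i$ has only strictly positive $\lambda$-weights, i.e.\ that $x_0 = 0$. Where you diverge is in how that positivity is obtained. The paper settles it in one sentence: since $\alpha_{S,x}(\lambda_{\mathfrak{u}})$ is strictly positive, the weights occurring in $x$ are all strictly positive --- which is precisely your ``structural shortcut'' (the optimal cocharacter of the unstable point $x$ drives it to $0$, the point of $S$ produced in Lemma \ref{fond}). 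Your main argument --- projecting to the Levi $C_G(\lambda)$, embedding the weight-zero part $\mathfrak{u}_0$ into $\radu(B_L)$ via Corollary \ref{corollaire_LMT}, and perturbing $\lambda$ by a dominant regular cocharacter $\mu$ of the Levi --- does not appear in the paper, and as you yourself flag it is not complete: the inequality $\alpha_{S,x}(\gamma) > B\lVert\gamma\rVert$ for the combined cocharacter $\gamma$ is exactly what would need to be proved, and it is genuinely delicate because $\alpha_{S,x}$ as defined changes discontinuously when the limit point jumps from $x_0 \neq 0$ to $0$; ``raising the order of contact'' does not by itself beat the renormalisation by $\lVert\gamma\rVert$. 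So the proposal succeeds only through the shortcut, which coincides with the paper's argument; if you wish to keep the perturbation route as a self-contained proof, the convexity and norm estimates from Kempf's proof of Theorem \ref{Kempf} must actually be carried out rather than invoked.
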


	\begin{proof}
		According to \cite[I, Proposition 2.1.8]{CGP} the Lie algebra $\mathfrak{p_g(\lambda_u)}$ writes $\mathfrak{c_g(\lambda_u)} \oplus \mathfrak{u_g(\lambda_u)}$ where: 
			\begin{itemize}
				\item the factor $\mathfrak{c_g(\lambda_u) }:= \mathfrak{g}_0$ is the Lie algebra of the centraliser of the torus $\lambda(\mathbb{G}_m)$ (which is denoted by $C_G(\lambda)$),
				\item and $\mathfrak{u_g(\lambda_u)} := \bigoplus_{n>0} \mathfrak{g}_n$, with $\mathfrak{g}_n := \{v \in \mathfrak{g} \mid \lambda(t) \cdot v = t^n v\}$.
			\end{itemize}
The choice of $\lambda_{\mathfrak{u}}$ is such that $\alpha_{S,x}(\lambda_{\mathfrak{u}})$ is strictly positive. In particular the weights are all strictly positive for $x$, so they are for any vector of the basis of $\mathfrak{u}$ and hence for $\mathfrak{u}$. This allows us to obtain the inclusion $\mathfrak{u} \subseteq \mathfrak{u_g(\lambda_u)}$.
	\end{proof}

\begin{remark}
If $\mathfrak{u} \subseteq \mathfrak{g}$ is a nilpotent Lie algebra of dimension $1$, there always exists a parabolic subgroup $P_G(\lambda_{\mathfrak{u}}) \subseteq G$ such that $\mathfrak{u} \subseteq \mathfrak{u_g(\lambda_u)}$ whatever the characteristic of $k$. In this case indeed the $\Ad(G)$-orbite of $x$ is never closed (according to \cite[\S 2.10]{Jantzen2004})). Let also us remark that the restriction on $p$ in the general case (without restriction on the dimension), ensures that $\mathfrak{u}$ is embeddable into the Lie algebra of the unipotent radical of a Borel subgroup. This guarantee that $\Ad(G)$-orbit of  $x$ is not closed (see the proof of Lemma \ref{fond}).
\end{remark}

	\subsection{Obstruction}
	In what follows we assume that the restricted $p$-nil $p$-Lie subalgebra $\mathfrak{u} \subseteq \mathfrak{g}$ is the set of all $p$-nilpotent elements of $\rad(N_\mathfrak{g}(\mathfrak{u}))$.
	
		\subsubsection{Preliminary remarks}

As explained by the following lemma, the whole point here is to obtain the inclusion $N_{\mathfrak{g}}(\mathfrak{u})\subseteq \mathfrak{p_g(\lambda_u)}$: 

\begin{lemma}
Let us assume that the inclusion $N_{\mathfrak{g}}(\mathfrak{u}) \subseteq \mathfrak{p_g(\lambda_u)}$ holds true. Then under our assumptions on $\mathfrak{u}$ this normaliser is a parabolic subalgebra of $\mathfrak{g}$.
\label{norm_parab}
\end{lemma}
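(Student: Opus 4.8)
The plan is to upgrade the given inclusion $N_{\mathfrak{g}}(\mathfrak{u}) \subseteq \mathfrak{p_g(\lambda_u)}$ into an equality, since $\mathfrak{p_g(\lambda_u)}$ is by construction the Lie algebra of the parabolic subgroup $P_G(\lambda_{\mathfrak{u}})$ and is therefore a parabolic subalgebra. Write $\mathfrak{h} := N_{\mathfrak{g}}(\mathfrak{u})$, $\mathfrak{p} := \mathfrak{p_g(\lambda_u)}$ and $\mathfrak{n} := \mathfrak{u_g(\lambda_u)} = \radu(\mathfrak{p})$. By Lemma~\ref{unip_parab} we already have $\mathfrak{u} \subseteq \mathfrak{n}$, and trivially $\mathfrak{u} \subseteq \mathfrak{h}$; the whole task is thus to prove the reverse inclusion $\mathfrak{p} \subseteq \mathfrak{h}$, for which I will argue that in fact $\mathfrak{u} = \mathfrak{n}$ and then invoke the self-normalising property of the unipotent radical of a parabolic.

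First I would pin down $\mathfrak{u}$ inside $\mathfrak{n}$. Since $\mathfrak{n}$ is an ideal of $\mathfrak{p}$ and $\mathfrak{h} \subseteq \mathfrak{p}$ is a subalgebra, the intersection $\mathfrak{h} \cap \mathfrak{n}$ is an ideal of $\mathfrak{h}$; it is $p$-nil (being a subalgebra of the $p$-nil algebra $\mathfrak{n}$), hence a $p$-nil ideal of $\mathfrak{h}$ contained in $\rad(\mathfrak{h})$. The defining hypothesis on $\mathfrak{u}$ — that it is the set of all $p$-nilpotent elements of $\rad(\mathfrak{h})$ — then forces $\mathfrak{h}\cap\mathfrak{n} \subseteq \mathfrak{u}$, while $\mathfrak{u}\subseteq\mathfrak{h}$ and $\mathfrak{u}\subseteq\mathfrak{n}$ give the reverse; thus $\mathfrak{u} = \mathfrak{h} \cap \mathfrak{n}$. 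Here I would also record, via the reminders of Section~\ref{radicaux_p_alg_restreinte} (licit because $p$ is not of torsion), that $\mathfrak{n} = \radu(\mathfrak{p}) = \rad_p(\mathfrak{p})$ is exactly the set of $p$-nilpotent elements of $\rad(\mathfrak{p})$, so that $\mathfrak{n}$ plays for $\mathfrak{p}$ the same Morozov-type role that $\mathfrak{u}$ plays for $\mathfrak{h}$.

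The main obstacle is the reverse inclusion $\mathfrak{n} \subseteq \mathfrak{u}$, equivalently $\mathfrak{n} \subseteq \mathfrak{h}$, i.e. that $\radu(\mathfrak{p})$ normalises $\mathfrak{u}$: this cannot be extracted from the bracket relations on the graded pieces alone — a priori $\mathfrak{u}$ is merely a subalgebra of $\mathfrak{n}$, not an ideal of it — and it is precisely where the optimality of $\lambda_{\mathfrak{u}}$ must enter. The approach I would take is to work at the group level with $V := \Rad_U(P_G(\lambda_{\mathfrak{u}}))$, whose Lie algebra is $\mathfrak{n}$: using that $\Delta_{S,x}$ is a principal homogeneous space under $V(k)$ (Theorem~\ref{Kempf}(3)) and that every optimal cocharacter defines the same parabolic with $\mathfrak{u}$ in its unipotent radical, I would show that $V$ stabilises $\mathfrak{u}$ for the adjoint action, whence $\mathfrak{n} = \Lie(V) \subseteq N_{\mathfrak{g}}(\mathfrak{u}) = \mathfrak{h}$. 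Combined with $\mathfrak{u} = \mathfrak{h}\cap\mathfrak{n}$ this yields $\mathfrak{u} = \mathfrak{n} = \radu(\mathfrak{p})$, and then $\mathfrak{h} = N_{\mathfrak{g}}(\mathfrak{u}) = N_{\mathfrak{g}}(\radu(\mathfrak{p})) = \mathfrak{p}$ because a parabolic subalgebra is the normaliser of its own nilradical; hence $N_{\mathfrak{g}}(\mathfrak{u})$ is parabolic. I expect the genuinely delicate point to be the claim that $V$ preserves $\mathfrak{u}$, and should the torsor argument prove too weak I would instead fall back on establishing $\mathfrak{n}\subseteq\rad(\mathfrak{h})$ directly from $\mathfrak{h}\subseteq\mathfrak{p}$ together with the $p$-reductivity of the image of $\mathfrak{h}$ in the Levi quotient $\mathfrak{p}/\mathfrak{n}$ that is forced by the equality $\mathfrak{u} = \mathfrak{h}\cap\mathfrak{n}$.
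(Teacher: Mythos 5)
Your first half coincides with the paper's argument: you correctly show that $\mathfrak{h}\cap\mathfrak{n}=N_{\mathfrak{g}}(\mathfrak{u})\cap\mathfrak{u_g(\lambda_u)}$ is a $p$-nil ideal of $\mathfrak{h}$ (using the hypothesis $\mathfrak{h}\subseteq\mathfrak{p}$ and the fact that $\mathfrak{n}$ is an ideal of $\mathfrak{p}$), hence lies in $\rad(\mathfrak{h})$ and therefore in $\mathfrak{u}$, giving $\mathfrak{u}=\mathfrak{h}\cap\mathfrak{n}$. But you then misdiagnose the remaining step. The equality you have just proved reads $N_{\mathfrak{n}}(\mathfrak{u})=\mathfrak{n}\cap N_{\mathfrak{g}}(\mathfrak{u})=\mathfrak{u}$, i.e.\ $\mathfrak{u}$ is its own normaliser inside $\mathfrak{n}$. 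Since $\mathfrak{n}=\mathfrak{u_g(\lambda_u)}$ is $p$-nil, hence (by Engel) a nilpotent Lie algebra, the standard ``normalisers grow'' property of nilpotent Lie algebras (a proper subalgebra of a nilpotent Lie algebra is strictly contained in its normaliser; this is the reference to Humphreys in the paper) forces $\mathfrak{u}=\mathfrak{n}$ outright. This is exactly how the paper concludes, by contradiction: if $\mathfrak{u}\subsetneq\mathfrak{n}$ then $N_{\mathfrak{n}}(\mathfrak{u})\supsetneq\mathfrak{u}$, contradicting the equality you established. No further input from the optimality of $\lambda_{\mathfrak{u}}$ is needed beyond Lemma~\ref{unip_parab}, and your claim that the step ``cannot be extracted from the bracket relations'' is incorrect.

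The gap, then, is that the step you single out as the main obstacle is left unproven: you only sketch a group-level strategy (showing that $\Rad_U(P_G(\lambda_{\mathfrak{u}}))$ stabilises $\mathfrak{u}$ via the torsor structure of $\Delta_{S,x}$) without carrying it out, and that route is both harder than necessary and not obviously workable as stated. Once $\mathfrak{u}=\mathfrak{n}$ is in hand, your conclusion is fine, though it is cleaner to avoid asserting the full equality $N_{\mathfrak{g}}(\radu(\mathfrak{p}))=\mathfrak{p}$ (which could a priori fail in characteristic $p$): one only needs the trivial inclusion $\mathfrak{p}\subseteq N_{\mathfrak{g}}(\mathfrak{n})=N_{\mathfrak{g}}(\mathfrak{u})=\mathfrak{h}$, which together with the hypothesis $\mathfrak{h}\subseteq\mathfrak{p}$ gives $\mathfrak{h}=\mathfrak{p}=\mathfrak{p_g(\lambda_u)}$, a parabolic subalgebra.
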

	
	\begin{proof}
The inclusion $\mathfrak{u} \subseteq \mathfrak{u_g(\lambda_u)}$ has been obtained in Lemma \ref{unip_parab} we have shown. Assume this inclusion to be strict. In this case $\mathfrak{u}$ is a proper ideal of $\mathfrak{u}' := N_{\mathfrak{u_g(\lambda_u)}}(\mathfrak{u}) \subseteq \mathfrak{u_g(\lambda_u)}$ (see for example \cite[Chapter 1, \S 3 exercise 7]{HUM}). 

The intersection $\mathfrak{u_g(\lambda_u)} \cap \N_{\mathfrak{g}}(\mathfrak{u})=\mathfrak{u}'$ is a restricted $p$-algebra because it derives from an algebraic group (in this case $\mathfrak{u}':= \Lie(N_{U_G(\mathfrak{\lambda_u})}(\mathfrak{u}))$, see Lemma \ref{reminder_normaliser}). This is a $p$-ideal of $\N_{\mathfrak{g}}(\mathfrak{u})$ (because $N_{\mathfrak{g}}(\mathfrak{u}) \subseteq \mathfrak{p_g(\lambda_u)}$ by assumption). It is $p$-nil according to \cite[Remark 2.14]{JEA1} because $\mathfrak{u_g(\lambda_u)}$ is the Lie algebra of the unipotent radical of $P_G(\lambda_{\mathfrak{u}})$. In particular $\mathfrak{u'}$ is a nilpotent (thus solvable) ideal of $N_{\mathfrak{g}}(\mathfrak{u})$, so it is contained in $\rad(N_{\mathfrak{g}}(\mathfrak{u}))$. As $\mathfrak{u}$ is the set of $p$-nilpotent elements of this radical, one necessarily has $\mathfrak{u}'\subseteq \mathfrak{u}$, thus the equality $ \mathfrak{u}= \mathfrak{u_g(\lambda_u)} \cap \N_{\mathfrak{g}}(\mathfrak{u})$. This contradicts the strictness of the inclusion $\mathfrak{u} \subsetneq \mathfrak{u_g(\lambda_u)}$. In other words  the equality $\mathfrak{u} = \mathfrak{u_g(\lambda_u)}$ is satisfied.
	\end{proof}

\begin{remark}
The inclusion required in Lemma \ref{norm_parab} is immediate in some cases, for example
when $\N_G(\mathfrak{u})$ is smooth (apply \cite[Corollary 9, (2)]{M2} to $\N_G(\mathfrak{u})^0$).
\label{cas_ok_inclusion_norm}
\end{remark} 

\begin{remark} 
When the subalgebra $\mathfrak{u} \subset \mathfrak{g}$ is generated by a $p$-nilpotent element $x$, and with no additional assumption, the normaliser $\N_{\mathfrak{g}}(\mathfrak{u})$ is necessarily the Lie algebra of a parabolic subgroup of $G$. Indeed, under this assumption and according to G. R. Kempf \cite[Proposition 18 (1)]{M1} there exists an optimal cocharacter $\lambda$ of $\mathfrak{u}$ associated to $x$. This means that:
			\begin{enumerate}
				\item the element $x$ is of weight $2$ for the conjugation by $\lambda$,
				
\item the image $\lambda(\mathbb{G}_m)$ is contained in the derived group of a Levi subgroup $L \subseteq G$ for which $x \in \Lie(L)$ is distinguished. As a reminder a Levi subgroup is distinguished if any torus contained in $C_G(x)$ is contained in the center of $G$.
			\end{enumerate}
The $\lambda$-conjugation on $\mathfrak{g}$ induces a grading on $\mathfrak{g} = \bigoplus_i \mathfrak{g}_i$. For such a grading the normaliser $N_{\mathfrak{g}}(\mathfrak{u})$ is contained in $\mathfrak{g}_0$. Indeed let $n := \bigoplus_{i \in \mathbb{Z}} n_i \in \N_{\mathfrak{g}}(x)$, thus the bracket $[n,x]$ lies in $\mathfrak{g}_2$. In particular, the bracket $[n_i,x]$ is necessarily trivial for any $i < 0$ because the bracket $[\mathfrak{g}_i, \mathfrak{g}_j]$ belongs to $\mathfrak{g}_{i+j}$. In other words $n_i \in C_{\mathfrak{g}}(x)$ for any $i<0$, where $C_{\mathfrak{g}}(x)$ is the centraliser of $x$ in $\mathfrak{g}$. But as $G$ satisfies the standard hypotheses, the equalities 
\[C_{\mathfrak{g}}(x) = \mathfrak{p}_G(\lambda) = \Lie(P_G(\lambda)) = \Lie(C_G(x))\] hold true (see \cite[5.9, Remark]{Jantzen2004}). Therefore $n_i = 0$ vanishes for any $i<0$ and the inclusion $\N_{\mathfrak{g}}(x) \subseteq \mathfrak{p}_G(\lambda)$ is satisfied. 	
\end{remark}

 	\subsubsection{Characterisation, proof of Theorem \ref{obstruction}}
The first point of Remark \ref{cas_ok_inclusion_norm} is actually an equivalence, as stated by Theorem \ref{obstruction}.

\begin{proof}[Proof of Theorem \ref{obstruction}]
		Let us denote by $U_G(\lambda_{\mathfrak{u}}) := \Rad_U(P_G(\lambda_{\mathfrak{u}}))$. 
		
		\begin{enumerate}
			\item[$1. \implies 3.$] There are two possibilities:
	\begin{itemize}
	\item either $P_G(\lambda_{\mathfrak{u}}) = G$, then the $p$-nil Lie algebra $\mathfrak{u}$ is trivial because so is $\mathfrak{u} \subseteq \mathfrak{u_g(\lambda_u)}$ (as a reminder this last inclusion is provided by Lemma \ref{unip_parab});
	\item or $P_G(\lambda_{\mathfrak{u}})$ is a proper parabolic subgroup of $G$. In this case the following holds true: 
	\[\mathfrak{u} \subseteq \mathfrak{u_g(\lambda_u)}=\rad_p(\mathfrak{p_g(\lambda_u)}) =\rad_p(N_{\mathfrak{g}}(\mathfrak{u}))\subset \rad(N_{\mathfrak{g}}(\mathfrak{u}))\]
\noindent where:
\begin{itemize}
\item the first inclusion is given by Lemma \ref{unip_parab},
\item the first equality is provided by \cite[Remark 2.14]{JEA1}, 
\item the second equality comes from the assumption $N_{\mathfrak{g}}(\mathfrak{u})=\mathfrak{p_g(\lambda_u)}$, \item and the last inclusion is provided by \cite[Lemma 2.5]{JEA1}. 
\end{itemize}
Furthermore the inclusion $\rad_p(N_{\mathfrak{g}}(\mathfrak{u})) \subseteq \mathfrak{u}$ is satisfied a priori because $\mathfrak{u}$ is the set of all $p$-nilpotent elements of $\rad(N_{\mathfrak{g}}(\mathfrak{u}))$. To summarise $\mathfrak{u} = \rad_p(N_{\mathfrak{g}}(\mathfrak{u}))$ and the equality $\mathfrak{u} = \mathfrak{u_g(\lambda_u)}$ is satisfied.
Moreover one has 
\[N_G(\mathfrak{u})^0 = N_G(\mathfrak{u_g(\lambda_u)})^{0} \supseteq N_G(U_G(\lambda_{\mathfrak{u}}))^0 = P_G(\lambda_{\mathfrak{u}})^0= P_G(\lambda_ {\mathfrak{u}})\] where 
\begin{itemize}
\item the inclusion is given by Lemma \ref{reminder_normaliser}, 
\item the first equality is ensured by assumption,
\item and the last equality is provided by \cite[XXII, Corollaire 5.8.5]{SGA33}.
\end{itemize}
In other words $P_G(\lambda_ {\mathfrak{u}})\hookrightarrow N_G(\mathfrak{u})^0$ is a monomorphism that induces a isomorphism of Lie algebras $P_{\mathfrak{g}}(\lambda_ {\mathfrak{u}})\hookrightarrow N_{\mathfrak{g}}(\mathfrak{u})^0:= \Lie(P_G(\lambda_{\mathfrak{u}}))$. Furthermore as $P_G(\lambda_{\mathfrak{u}})$ is a smooth $k$-group (because $P_G(\lambda_ {\mathfrak{u}})$ is a parabolic subgroup, see \cite[XXVI, Définition 1.1]{SGA33}). By \cite[II, \S5, 5.5]{DG} the aforementioned monomorphism of groups is therefore an open immersion. Finally the equality $P_G(\lambda_ {\mathfrak{u}}) = N_G(\mathfrak{u})^0$ holds true as $P_G(\lambda_ {\mathfrak{u}})$ is a closed subgroup of $N_G(\mathfrak{u})^0$. Hence $N_G(\mathfrak{u})^0$ is smooth over $k$.
	\end{itemize}
			\item [$3. \implies 2.$] This is clear because a parabolic subgroup is smooth by definition (see \cite[XXVI, Définition 1.1]{SGA33}).
			\item[$2. \implies 1.$] According to Lemma \ref{norm_parab} the key point is to show the inclusion $N_{\mathfrak{g}}(\mathfrak{u}) \subseteq \mathfrak{p_g(\lambda_u)}$. This is provided by \cite[Corollary 9, (2)]{M2} because $N_G(\mathfrak{u})^0$ is smooth by assumption. Let us detail the reasoning, the notations are those of section \ref{Kempf-Rousseau}. 
			
			The idea is to show that $N_{G}(\mathfrak{u})^{0}$ normalises $P_G(\lambda_{\mathfrak{u}})$ because then $N_{G}(\mathfrak{u})^{0} \subseteq N_{G}(P_G(\lambda_{\mathfrak{u}}))= P_G(\lambda_{\mathfrak{u}}),$
\noindent (the equality is immediate because $P_G(\lambda_{\mathfrak{u}})$ is its own normaliser, see \cite[XXII, corollaire 5.8.5]{SGA33}). This leads to the desired inclusion.

The smoothness of the normaliser $N_{G}(\mathfrak{u})^{0}$ is crucial: it allows us to restrict our proof to a reasoning on $k$-points by Zariski-density of $N_G(\mathfrak{u})^0(k)$ in $N_G(\mathfrak{u})^0$ (because $k$ is algebraically closed, see \cite[Corollaire 10.4.8]{EGA4c}). Thus one only needs to show that $\Ad(h) P_G(\lambda_{\mathfrak{u}}) = P_G(\lambda_{\mathfrak{u}})$ for any $h \in N_G(\mathfrak{u})^{0}(k)$. According to \cite[Corollary 3.5]{K} the subgroup $\Ad(h) P_G(\lambda_{\mathfrak{u}})$ and the optimal parabolic subgroup defined by an optimal cocharacter of $\Delta_{S,\Ad(h)x}$ satisfy the equality $\Ad(h) P_G(\lambda_{\mathfrak{u}}) =  P_{S,\Ad(h)x}$. 
Moreover, the sets $\Delta_{S,x}$ and $\Delta_{S, \Ad(h)x}$ are equal according to \cite[Corollary 7]{M2} because for any $h\in N_G(\mathfrak{u})^0(k) $:
	\begin{enumerate}
		\item the sets $\lvert X,x \rvert$ and $\lvert X, \Ad(h)(x)\rvert$ are the same,  
		 \item one has $\alpha_{S,x}(\lambda) = \alpha_{S,\Ad(h)x}(\lambda)$.
	\end{enumerate}
This leads to the equality $P_{S,\Ad(h)x} = P_{G}(\lambda_{\mathfrak{u}})$ for any $h \in N_G(\mathfrak{u})^0(k)$, whence the inclusion $N_G(\mathfrak{u})^0\subseteq P_G(\lambda_{\mathfrak{u}})$.
		\end{enumerate}
	\end{proof}

\begin{remark}
When $k$ is of characteristic $p>\h(G)$ we introduce in Section \ref{saturation_inf} the notion of infinitesimal saturation defined by P. Deligne and show, by making use of Lemma \ref{sat_inf_normalisateur}, that the subgroup $N_G(\mathfrak{u}) \subseteq G$ is infinitesimally saturated, see Definition \ref{def_sat_inf}. This ensures that Lemma \ref{p_rad_inf_sat} holds true in the context we are interested in (see \cite[Theorem 2.5]{DG}) and thus leads to get the equality $\mathfrak{u} = \radu(P_G(\lambda_{\mathfrak{u}}))$ more easily (as one only needs to apply the aforementioned lemma).
\end{remark}

\section{Analogue and corollaries of Morozov's Theorem in characteristic \texorpdfstring{$p>\h(G)$}{Lg}}
\label{chapitre_morozov_p_sup}

In this section $k$ is an algebraically closed field of characteristic $p>0$ and $G$ is a reductive $k$-group. In Section \ref{proof_Morozov_psup} we prove that the analogous statement of Morozov's Theorem holds true when $p>\h(G)$:

\begin{theorem}[Analogue of Morozov's Theorem when $p>\h(G)$]
Let $k$ be an algebraically closed field of characteristic $p>0$ and $G$ be a reductive $k$-group such that $p > \h(G)$ (where $\h(G)$ is the Coxeter number of $G$, see Section \ref{hypothèses_car}). Let $\mathfrak{u} \subseteq \mathfrak{g}$ be a Lie subalgebra of $\mathfrak{g}$ and assume that $\mathfrak{u}$ is the $p$-radical of $N_{\mathfrak{g}}(\mathfrak{u})$. Then:
	\begin{enumerate}
		\item the normaliser $N_\mathfrak{g}(\mathfrak{u})$ is a parabolic subalgebra of $\mathfrak{g}$,
		\item this parabolic subalgebra satisfies the equalities $N_\mathfrak{g}(\mathfrak{u})= \mathfrak{p_g}(\lambda_{\mathfrak{u}})= \mathfrak{p_g}(U)$ 
where $U \subset G$ is a unipotent smooth connected subgroup such that $\Lie(U) = \mathfrak{u}$ (when $p>\h(G)$ such a subgroup exists according to \cite[Section 6]{BDP}, see also \cite[Proposition 3.1]{JEA1}).		 
	\end{enumerate}
\label{Morozov_p_sup}
\end{theorem}

This theorem can be rephrased in terms of $p$-nilpotent elements of the radical of the normaliser, this is the point of section \ref{section_cor_p_grand}:

\begin{corollary}
Let $k$ be an algebraically closed field of characteristic $p>0$ and $G$ a reductive $k$-group such that $p > \h(G)$. Let $\mathfrak{u} \subset \mathfrak{g}$ be a restricted $p$-subalgebra of $\mathfrak{g}$ such that $\mathfrak{u}$ is the set of all $p$-nilpotent elements of the radical $\rad(N_{\mathfrak{g}}(\mathfrak{u}))$. Then $N_\mathfrak{g}(\mathfrak{u})$ is the Lie subalgebra of the parabolic subgroup obtained in Theorem \ref{Morozov_p_sup}. 
\label{cor_p_elements_p_sup}
\end{corollary}

In the same section we also derive a ``limit'' statement of Morozov's Theorem in characteristic $p>\h(G)$:

\begin{corollary}
Under the assumptions of Theorem \ref{Morozov_p_sup}, let $\mathfrak{u}\subset \mathfrak{g}$ be a restricted $p$-nil subalgebra, then:
\begin{enumerate}
	\item the tower of normalisers of $\mathfrak{u}$ converges to a parabolic subalgebra of $\mathfrak{g}$ denoted by $N_{\mathfrak{g}}(\mathfrak{u}_{\infty})$, 
	\item this limit object satisfies the following equalities $N_{\mathfrak{g}}(\mathfrak{u}_{\infty}) = \mathfrak{p_g(\lambda_{u_{\infty}})} = \Lie(N_G(V_{\infty}))$ where the subgroup $N_G(V_{\infty})$ is the limit object of the tower of smooth normalisers associated to $\Rad_U(N_G(\mathfrak{u})^0_{\red})$.
\end{enumerate}
\label{cor_towers_p_sup}
\end{corollary}

Let us in particular remark that Theorem \ref{Morozov_p_sup} above is a special case of this corollary (for which the tower of normalisers stabilises directly).

Section \ref{section_cor_Killing} is dedicated to the proof of Corollary \ref{cor_killing}, which rephrase Corollary \ref{cor_p_elements_p_sup} by involving a non-degenerate $G$-equivariant symmetric bilinear form on $\mathfrak{g}$.

Finally, Section \ref{section_cor_Mor_pgrand} is dedicated to prove that Statement \ref{cor_morozov} holds true when $p>\h(G)$:

\begin{corollary}[of Theorem \ref{Morozov_p_sup}]
Let $k$ be an algebraically closed field of characteristic $p>0$ and let $G$ be a reductive $k$-group. Assume that $p>\h(G)$. Let $\mathfrak{q} \subseteq \mathfrak{g}$ be a maximal proper restricted $p$-Lie subalgebra. Then $\mathfrak{q}$ is either $p$-reductive (see Definition \ref{def_pred}) or parabolic.   
\label{cor_Mor_pgrand}
\end{corollary}

Let $\mathfrak{u} \subset \mathfrak{g}$ be a restricted $p$-nil $p$-Lie subalgebra. See \cite[Section 2.2]{JEA1} for the terminology and the fundamental properties of the objects involved in this section. Under the assumption $p>\h(G)$ a theorem of J-P. Serre in \cite[Part II, Lecture 2, Theorem 3]{SER}, and detailed by V. Balaji, P. Deligne and A. J. Parameswaran in \cite[section 6]{BDP} provides the existence of an integration for any restricted $p$-nil $p$-Lie subalgebra of $\mathfrak{g}$ (see also \cite[Proposition 3.1]{JEA1}). Nevertheless, and as already mentioned in the introduction, such an integration is a priori no longer compatible with the adjoint representation. In particular: 
\begin{itemize}
\item if we denote by $U \subset G$ the unipotent smooth connected subgroup that integrates $\mathfrak{u}$, there is no good reasons for the equality $N_{G}(\mathfrak{u}) = N_G(U)$ to hold true a priori. 
\item Moreover, as mentioned in the introduction, the normalisers involved here are a priori no longer smooth. 
\end{itemize}
The notion of infinitesimal saturation (see definition \ref{def_sat_inf}) allows to settle these issues.

\subsection{Measuring the lack of smoothness of normalisers}
\subsubsection{Infinitesimal saturation}
	\label{saturation_inf}

The following lemmas are crucial in the proof of Theorem \ref{Morozov_p_sup}. Let us note that they are immediate as soon as $p>2\h(G)-2$, as detailed in Section \ref{rem_car_0_ou_grande}.

\begin{lemma}
Let $k$ be an algebraically closed field of characteristic $p>0$ and $G$ be a reductive $k$-group such that $p>\h(G)$. Let $\mathfrak{u} \subseteq \mathfrak{g}$ be a restricted $p$-nil subalgebra of $\mathfrak{g}$. The normaliser $N_G(\mathfrak{u})$ is infinitesimally saturated.
\label{sat_inf_normalisateur}
\end{lemma}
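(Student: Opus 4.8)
The plan is to show that for any $p$-nilpotent element $x \in \mathfrak{u}$, the image of the exponential map $\exp_x : \mathbb{G}_a \to G$ (which is well defined since $p > \h(G)$) factors through $N_G(\mathfrak{u})$. By the definition of infinitesimal saturation, this amounts to verifying that $\exp_x(t) = \exp(tx)$ normalises $\mathfrak{u}$ for all $t$, i.e. that $\Ad(\exp(tx))(\mathfrak{u}) = \mathfrak{u}$. The key is that when $p > \h(G)$, the exponential and the adjoint action are linked by the formula $\Ad(\exp(tx)) = \exp(\ad(tx)) = \sum_{i \geq 0} \frac{t^i}{i!} \ad(x)^i$, where the sum is finite because $\ad(x)$ is nilpotent (as $x$ is $p$-nilpotent, and $p > \h(G)$ forces $x^{[p]} = 0$ by McNinch's result, so $\ad(x)^p = \ad(x^{[p]}) = 0$, and in fact the nilpotency degree stays below $p$).

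First I would fix a $p$-nilpotent $x \in \mathfrak{u}$ and record that $\ad(x)$ is a nilpotent endomorphism of $\mathfrak{g}$ with $\ad(x)^p = 0$, so $\exp(\ad(tx))$ is a polynomial in $t$ of degree $< p$ with coefficients in $\End(\mathfrak{g})$. The crucial point is then that $\Ad(\exp(tx))$ preserves $\mathfrak{u}$. Since $\mathfrak{u}$ is a subalgebra, for any $y \in \mathfrak{u}$ each term $\frac{t^i}{i!}\ad(x)^i(y) = \frac{t^i}{i!}[x,[x,\ldots,[x,y]]]$ is an iterated bracket of elements of $\mathfrak{u}$ and hence lies in $\mathfrak{u}$; therefore $\Ad(\exp(tx))(y) \in \mathfrak{u}$ for every $t$. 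This shows $\exp(tx) \in N_G(\mathfrak{u})(k[t])$, and by working functorially over an arbitrary base (replacing $t$ by a variable in any $k$-algebra $R$) one obtains that the homomorphism $\exp_x$ factors set-theoretically, hence scheme-theoretically since $N_G(\mathfrak{u})$ is a closed subscheme of $G$, through $N_G(\mathfrak{u})$.

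The main technical obstacle is justifying the identity $\Ad(\exp(tx)) = \exp(\ad(tx))$ in characteristic $p$: unlike the characteristic $0$ situation this is not automatic, and one must invoke the compatibility results of Serre and of Balaji--Deligne--Parameswaran valid precisely in the range $p > \h(G)$ (and more comfortably $p > 2\h(G)-2$, as flagged in Section \ref{rem_car_0_ou_grande}). Concretely, I would cite \cite[Part II, Lecture 2, Theorem 3]{SER} together with \cite[\S 4.6]{BDP}, which establish that the truncated exponential is compatible with the adjoint representation in this characteristic range, so that the formal series manipulation above is legitimate and all denominators $i!$ for $i < p$ are invertible in $k$. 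Once this compatibility is in hand, the argument is purely formal and reduces to the stability of $\mathfrak{u}$ under iterated self-brackets.

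A point worth emphasising is that the conclusion requires only $p > \h(G)$, which is weaker than the $p > 2\h(G)-2$ regime in which the compatibility is immediate; bridging this gap is exactly where the finer analysis of \cite{BDP} (rather than the cruder truncated-exponential estimates) is needed, and I would expect this to be the delicate step that the subsequent lemmas in Section \ref{saturation_inf} are designed to support.
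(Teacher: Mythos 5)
Your argument has a genuine gap, and it is exactly the one you flag in your final paragraph without resolving. The entire proof rests on the identity $\Ad(\exp(tx)) = \exp(\ad(tx))$, but this identity is \emph{not} available in the full range $p>\h(G)$: it requires the adjoint representation to be of low height, i.e. $p > \haut_G(\mathfrak{g}) = 2\h(G)-2$ (see Remark \ref{petite_hauteur} and Section \ref{rem_car_0_ou_grande}; the failure of the compatibility for $\h(G)<p\leq 2\h(G)-2$ is precisely the content of \cite[4.1.7]{SER1} and \cite[Lemma 4.3]{HS}). The references you invoke (Serre's lectures and \cite[\S 4.6]{BDP}) establish compatibility of $\exp$ with a representation only under the low-height hypothesis for that representation, so they do not bridge the gap for $\h(G) < p \leq 2\h(G)-2$; the paper in fact records your computation essentially verbatim, but only as the proof of the lemma \emph{in the restricted range} $p>2\h(G)-2$. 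The actual proof for $p>\h(G)$ is different and purely group-theoretic: given a $p$-nilpotent $x \in N_{\mathfrak{g}}(\mathfrak{u})\setminus\mathfrak{u}$, one forms $\mathfrak{u}_x := \mathfrak{u}\oplus kx$, checks (using the Jacobson formula and McNinch's vanishing $y^{[p]}=0$ for $p>\h(G)$) that it is a restricted $p$-nil subalgebra, integrates both $\mathfrak{u}$ and $\mathfrak{u}_x$ into unipotent smooth connected subgroups $U \subseteq U_x$, observes that $U$ has codimension $1$ in $U_x$ and is therefore normal in $U_x$, and concludes $\exp(tx)\in U_x \subseteq N_G(U)\subseteq N_G(\mathfrak{u})$. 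No compatibility of $\Ad$ with $\exp$ is needed in that route.

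A secondary point: you quantify over $p$-nilpotent $x\in\mathfrak{u}$, but infinitesimal saturation of $N_G(\mathfrak{u})$ requires the factorisation for \emph{every} $p$-nilpotent $x$ in $\Lie(N_G(\mathfrak{u})) = N_{\mathfrak{g}}(\mathfrak{u})$, which is in general strictly larger than $\mathfrak{u}$. Your series computation would formally extend to such $x$ (since $\ad(x)$ preserves $\mathfrak{u}$ for $x$ in the normaliser), but as written your plan addresses only the smaller set, and the case $x\in N_{\mathfrak{g}}(\mathfrak{u})\setminus\mathfrak{u}$ is where the paper's codimension-one argument does the real work.
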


\begin{proof}
Let $x \in
\Lie(N_G(\mathfrak{u}))= N_{\mathfrak{g}}(\mathfrak{u})$ be a $p$-nilpotent element (note that the equality is provided by Lemma \ref{reminder_normaliser}). One needs to show that the $t$-power map factors through $N_G(\mathfrak{u})$. The following holds true:
	\begin{enumerate}
		\item if $x \in \mathfrak{u}$, as $U$ integrates $\mathfrak{u}$, the inclusions $\exp(tx) \in U \subseteq N_G(U) \subseteq N_G(\mathfrak{u})$ are satisfied (see for example \cite[Lemma 6.4]{JEA1}),
		\item if $x \in N_{\mathfrak{g}}(\mathfrak{u}) \setminus \mathfrak{u}$, let $\mathfrak{u}_{x}:= \mathfrak{u} \oplus kx$ be the restricted $p$-Lie algebra generated by $\mathfrak{u}$ and $x$. This is a Lie subalgebra because $x \in N_{\mathfrak{g}}(\mathfrak{u})$. It remains to show that it is $p$-nil. This means that we have to check that any element of the form $u + x$, with $u \in \mathfrak{u}$ is $p$-nilpotent. As $p>\h(G)$ this amounts to show that $(u +x)^{[p]}=1$. By assumption one has $\mathfrak{u} \oplus kx \subseteq \mathfrak{u}$. This assumption together with Jacobson formulae provide the desired result. 
According to \cite[Proposition 3.1]{JEA1} there exist unipotent smooth connected subgroups of $G$, respectively $U$ and $U_x$ such that $\Lie(U_x) = \mathfrak{u}_x$, respectively $\Lie(U) =\mathfrak{u}$. Note that these constructions are independent from the Borel subalgebra in whose $\mathfrak{u}$ and $\mathfrak{u}_x$ are embedded. In particular, it is not necessary to choose the same Borel subalgebra for both $\mathfrak{u}$ and $\mathfrak{u}_x$.

By construction we have that $U \subseteq U_x$. Furthermore these groups are smooth, so the following equalities $\dim \mathfrak{u}_x = \dim U_x$ and $\dim \mathfrak{u}= \dim U$ hold true. As $\mathfrak{u} \subseteq \mathfrak{u}_x$ is of codimension $1$ so is the unipotent smooth connected subgroup $U \subseteq U_x$. According to \cite[IV, \S 4 Corollaire 3.15]{DG} this is a normal subgroup of $U_x$ (this quotient is isomorphic to $\mathbb{G}_a$). In other words on has $U_x \subseteq N_G(U) \subseteq N_G(\mathfrak{u})$ where the last inclusion comes from Lemma \ref{reminder_normaliser}, so $\exp(tx) \in N_G(\mathfrak{u})$ and $N_G(\mathfrak{u})$ are infinitesimally saturated.
\end{enumerate}		
\end{proof}

The proof of Lemma \ref{sat_inf_normalisateur} allows us to show the following result:

\begin{lemma}
Under the previous assumptions, set $U:= \exp_{\mathfrak{b}}(\mathfrak{u})$ where $\mathfrak{b}\subseteq \mathfrak{g}$ is a Borel subalgebra such that $\mathfrak{u} \subseteq \radu(B)$. Then $N_G(U)$ is infinitesimally saturated.
\label{sat_inf_norm_groupe}
\end{lemma}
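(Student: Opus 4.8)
The plan is to verify the defining property of infinitesimal saturation by hand, that is, to show that for every $p$-nilpotent element $x \in \Lie(N_G(U))$ the $t$-power map $t \mapsto \exp(tx)$ factors through $N_G(U)$, and to do this by recycling the dichotomy already set up in the proof of Lemma \ref{sat_inf_normalisateur}. First I would observe, using Lemma \ref{reminder_normaliser}, that since $U$ is smooth with $\Lie(U) = \mathfrak{u}$ one has $N_G(U) \subseteq N_G(\mathfrak{u})$, and hence $\Lie(N_G(U)) \subseteq N_{\mathfrak{g}}(\mathfrak{u})$. Thus any $p$-nilpotent $x \in \Lie(N_G(U))$ is in particular a $p$-nilpotent element of $N_{\mathfrak{g}}(\mathfrak{u})$, which is exactly the situation analysed in the previous lemma.

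Then I would replay the two cases, taking care to track that the exponential actually lands in $N_G(U)$ rather than merely in $N_G(\mathfrak{u})$. If $x \in \mathfrak{u}$, then since $U = \exp_{\mathfrak{b}}(\mathfrak{u})$ integrates $\mathfrak{u}$ one gets $\exp(tx) \in U \subseteq N_G(U)$ directly. If instead $x \in N_{\mathfrak{g}}(\mathfrak{u}) \setminus \mathfrak{u}$, then $\mathfrak{u}_x := \mathfrak{u} \oplus kx$ is again a restricted $p$-nil subalgebra (as checked in the previous proof by computing $(u+x)^{[p]}$ via the Jacobson formulae, using $p > \h(G)$), so it integrates to a unipotent smooth connected subgroup $U_x$ with $U \subseteq U_x$ of codimension one. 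By \cite[IV, \S 4 Corollaire 3.15]{DG} the subgroup $U$ is then normal in $U_x$, whence $U_x \subseteq N_G(U)$; since $x \in \Lie(U_x)$ the map $t \mapsto \exp(tx)$ factors through $U_x$, and therefore through $N_G(U)$.

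The subtle point, and the reason this does not follow formally from Lemma \ref{sat_inf_normalisateur}, is that the integration $U = \exp_{\mathfrak{b}}(\mathfrak{u})$ need not be compatible with the adjoint representation, so $N_G(U)$ may be strictly smaller than $N_G(\mathfrak{u})$ and one cannot simply quote the previous statement. What makes the argument go through is precisely that the two cases produce one-parameter subgroups lying inside $U$ and inside $U_x$, and that \emph{both} of these groups are contained in $N_G(U)$ itself (the first trivially, the second by the codimension-one normality). The only thing I would double-check is that $U = \exp_{\mathfrak{b}}(\mathfrak{u})$ coincides with the sub-integration of $\mathfrak{u}$ inside $U_x$, so that the inclusion $U \subseteq U_x$ genuinely holds; this is exactly the independence of the construction from the ambient Borel subalgebra recorded in the proof of Lemma \ref{sat_inf_normalisateur}. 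Granting this, the factorization holds for every $p$-nilpotent $x \in \Lie(N_G(U))$, and $N_G(U)$ is infinitesimally saturated.
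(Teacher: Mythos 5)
Your proposal is correct and follows exactly the route the paper takes: the paper's own proof simply observes that the two-case analysis in the proof of Lemma \ref{sat_inf_normalisateur} already produces one-parameter subgroups landing in $U$ (respectively in $U_x\subseteq N_G(U)$), hence in $N_G(U)$ itself, which is precisely the point you spell out. Your additional remarks on $\Lie(N_G(U))\subseteq N_{\mathfrak{g}}(\mathfrak{u})$ and on the inclusion $U\subseteq U_x$ are consistent with the citations of Lemma \ref{reminder_normaliser} and the construction recorded in the previous proof.
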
 

\begin{proof}[Proof of Lemma \ref{sat_inf_norm_groupe}]
We have to show that, given any $p$-nilpotent element $x \in \Lie(N_{G}(U))\subset N_{\mathfrak{g}}(\mathfrak{u})$, the $t$-power map factors through $N_G(U)$. The equality is ensured by \cite[Lemma 6.4]{JEA1}. According to the proof of Lemma \ref{sat_inf_normalisateur}, the image of $x$ by the $t$-power map is an element of $N_{G}(U)$, hence the latter is infinitesimally saturated. 
\end{proof}

\begin{remark}
We just have shown in Lemma \ref{sat_inf_normalisateur} that the normaliser $N_{G}(\mathfrak{u})$ of any $p$-nil subalgebra $\mathfrak{u}\subset \mathfrak{g}$ is infinitesimally saturated. In particular the normaliser of Theorem \ref{Morozov_p_sup} is infinitesimally saturated. This condition is necessary for the statement to be true because any parabolic subgroup of $G$ is infinitesimally saturated (see \cite[Lemma 4.2]{JEA1}).
\end{remark}

	\subsubsection{Exponential map and normalisers}
	
Let $G$ be a reductive group over an algebraic closed field $k$ of characteristic $p>\h(G)$. When $U \subseteq G$ is a reductive subgroup only the inclusion $N_G(U)_{\red} \subseteq N_G(\mathfrak{u})_{\red}$ is a priori satisfied (see for instance \cite[Lemma 6.4 and Remark 6.7]{JEA1}). When $\mathfrak{u} \subseteq \mathfrak{g}$ is a restricted $p$-nil $p$-subalgebra, let $U =\exp_{\mathfrak{b}}(\mathfrak{u})$, where $\mathfrak{b}\subseteq \mathfrak{g}$ is a Borel subalgebra such that $\mathfrak{u} \subseteq \radu(B)$. Under our assumptions on $p$ the normalisers are equal. We start by showing the equality of the reduced parts, the preliminary remark is useful for what follows:

\begin{remark}
Remember that, given a $p$-nilpotent element $x \in \mathfrak{g}$, the exponential map induces the $t$-power map:
\begin{alignat*}{3}
\exp_x(\cdot) :  \: & \mathbb{G}_a \: & \rightarrow \: & G \\
\: & t \: & \mapsto \: & \exp(tx).
\end{alignat*}
that itself induces a morphism:
\begin{alignat*}{3}
\psi_{\mathfrak{u}} : W\: &(\mathfrak{u}) \:& \times \: & \mathbb{G}_a \: & \rightarrow \: & G \\
\: & (x,\: & \: & t) \: & \mapsto \: & \exp(tx)=:\exp_x(t).
\end{alignat*}
Denote by $\widetilde{U}$ the subgroup of $G$ generated by $\psi_{\mathfrak{u}}$ as an $\fppf$-sheaf (see \cite[VIB Proposition 7.1 and Remarque 7.6.1]{SGA31}). This subgroup $\widetilde{U}$ is:
\begin{itemize}
\item unipotent, according to \cite[IV, \S2, n\degree 2, Proposition 2.5]{DG} as it can be embedded into the unipotent radical of the Borel subgroup of $G$, 
\item connected, according to \cite[VIB Corollaire 7.2.1]{SGA31} because $\mathfrak{u} \cong W(\mathfrak{u})$ is geometrically reduced and geometrically connected;
\item and reduced, thus smooth, because it is the image of a smooth, thus reduced, group. 
\end{itemize} 
On the other hand, let $B\subset G$ be a Borel subgroup such that $\mathfrak{u}$ is a Lie subalgebra of $\mathfrak{\rad_u}(B)$ (this latter exists by virtue of \cite[Corollary 2.1 and Remark 2.2]{JEA1}). We denote by $U:=\exp_B(\mathfrak{u})$ the unipotent smooth connected subgroup obtained in \cite[Proposition 3.1]{JEA1}. The subgroups $\widetilde{U}$ and $U$ coincide, the inclusion $\widetilde{U} \subseteq U$ is clear by construction. Once again the equality of the considered groups follows from the equality of their Lie algebras. To obtain it, remember that
the exponential map $\exp_{\mathfrak{b}}$ induces the identity on the tangent spaces, hence the inclusion $\mathfrak{u} \subseteq \Lie(\widetilde{U})$ is necessarily satisfied. Therefore on actually has $\mathfrak{u} \subseteq \Lie(\widetilde{U})$, and so the $k$-groups $U$ and $\widetilde{U}$ are equal because $U$ and $\widetilde{U}$ are smooth and connected (see \cite[II, \S 5, 5.5]{BDP}).
\label{sous_groupe_engendre}
\end{remark}

	\begin{lemma}
With the previous notations one has $N_G(\mathfrak{u})_{\red} = N_G(U)_{\red}$. 
\label{egalite_normalisateurs_reduits}
\end{lemma}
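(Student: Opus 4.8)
The goal is to prove that the reduced normalisers coincide: $N_G(\mathfrak{u})_{\red} = N_G(U)_{\red}$, where $U = \exp_{\mathfrak{b}}(\mathfrak{u})$ is the unipotent smooth connected subgroup integrating $\mathfrak{u}$. The plan is to establish the two inclusions separately, using the two infinitesimal saturation results just proven (Lemmas \ref{sat_inf_normalisateur} and \ref{sat_inf_norm_groupe}) together with the standard containment $N_G(U)_{\red} \subseteq N_G(\mathfrak{Lie}(U))_{\red} = N_G(\mathfrak{u})_{\red}$ from Lemma \ref{reminder_normaliser}. The latter gives one inclusion for free since $U$ is smooth with $\Lie(U) = \mathfrak{u}$, so the real content is the reverse inclusion $N_G(\mathfrak{u})_{\red} \subseteq N_G(U)_{\red}$.

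For the reverse inclusion I would argue on $k$-points, which suffices because both reduced groups are smooth over the algebraically closed (hence perfect) field $k$, so their $k$-points are Zariski-dense (as in the density argument used in the proof of Theorem \ref{obstruction}). Thus I would take $h \in N_G(\mathfrak{u})_{\red}(k)$ and show that $\Ad(h)$ preserves $U$. Since $U$ is generated (for the $\fppf$-topology) by the images of the $t$-power maps $\exp_x(\cdot)$ for $x \in \mathfrak{u}$, as recalled in Remark \ref{sous_groupe_engendre}, it is enough to check that $\Ad(h)$ sends each such one-parameter subgroup into $U$. The key point is the $G$-equivariance of the exponential map: for a $p$-nilpotent $x$ one expects $\Ad(h)\exp(tx) = \exp(t\,\Ad(h)x)$. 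Because $h$ normalises $\mathfrak{u}$, the element $\Ad(h)x$ again lies in $\mathfrak{u}$, so $\exp(t\,\Ad(h)x) \in U$ by the very definition of $U$ as $\exp_{\mathfrak{b}}(\mathfrak{u})$ (here one must be slightly careful that the exponential of an element of $\mathfrak{u}$ indeed lands in $U$ regardless of which Borel was used, which is precisely the independence from the chosen Borel subalgebra noted in the proof of Lemma \ref{sat_inf_normalisateur} and in Remark \ref{sous_groupe_engendre}). This shows $\Ad(h)U = U$, i.e. $h \in N_G(U)_{\red}(k)$.

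The step I expect to be the main obstacle is the equivariance identity $\Ad(h)\exp(tx) = \exp(t\,\Ad(h)x)$, since the introduction stresses repeatedly that in characteristic $p \leq 2\h(G)-2$ the exponential map is \emph{not} compatible with the adjoint representation in general. The way around this is that we do not need full $\Ad(G)$-equivariance of $\exp$ as a morphism on all of the nilpotent cone; we only need that conjugation by a fixed group element $h$ that normalises $\mathfrak{u}$ carries the subgroup $U$ to the subgroup generated by $\exp_{\mathfrak{b}}$ of $\Ad(h)\mathfrak{u} = \mathfrak{u}$. Since $\Ad(h)U$ is again a unipotent smooth connected subgroup of $G$ with Lie algebra $\Ad(h)\mathfrak{u} = \mathfrak{u}$, and since $U$ is the unique such subgroup (two smooth connected subgroups with equal Lie algebras coincide, by \cite[II, \S5, 5.5]{DG}), one obtains $\Ad(h)U = U$ directly from uniqueness, bypassing any delicate compatibility of $\exp$ itself. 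This uniqueness argument, already invoked in Proposition \ref{morozov_Herpel_Stewart} and in Remark \ref{sous_groupe_engendre}, is therefore the cleanest route and sidesteps the characteristic obstructions entirely.
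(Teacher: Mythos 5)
Your overall strategy --- reduce to the inclusion $N_G(\mathfrak{u})_{\red} \subseteq N_G(U)_{\red}$, pass to $k$-points by Zariski density, and show that $\Ad(g)$ preserves $U$ for $g \in N_G(\mathfrak{u})_{\red}(k)$ --- is exactly the paper's. Your first route is also the paper's: one writes $h = \exp(x)$ with $x \in \mathfrak{u}(k)$ and concludes $\Ad(g)\exp(x) = \exp(\Ad(g)x) \in U(k)$. The worry that leads you to abandon this route rests on a conflation of two different properties. The identity $\Ad(g)\exp(x)=\exp(\Ad(g)x)$ is the $G$-equivariance of the exponential (Springer) map $\mathcal{N}_{\red}(\mathfrak{g}) \to \mathcal{V}_{\red}(G)$, which holds by construction whenever $p > \h(G)$ and is precisely what the paper invokes. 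What fails for $\h(G) < p \leq 2\h(G)-2$ is the \emph{other} compatibility, $\Ad(\exp(tx)) = \exp(t\ad(x))$ as operators on $\mathfrak{g}$, and that one is not needed here. So your first argument was already complete and should not have been set aside.

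The substitute ``uniqueness'' argument you declare to be the cleanest route has a genuine gap. The reference \cite[II, \S5, 5.5]{DG} yields $H_1 = H_2$ for smooth connected subgroups with equal Lie algebras only when one is contained in the other (equality of Lie algebras then forces equality of dimensions, hence of the groups); this is how the paper uses it, always in the presence of an inclusion such as $U \subseteq V$. Without a containment the statement is false in characteristic $p$: the subgroups $\{(t,0)\}$ and $\{(t,t^p)\}$ of $\mathbb{G}_a^2$ are distinct unipotent smooth connected subgroups with the same Lie algebra. A priori there is no containment between $\Ad(g)U$ and $U$ --- that is exactly what is being proved --- so the claim that $U$ is the unique unipotent smooth connected subgroup with Lie algebra $\mathfrak{u}$ is unjustified, and the equivariance computation you tried to bypass cannot in fact be avoided.
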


\begin{proof}
Only the inclusion $N_G(\mathfrak{u})_{\red} \subseteq N_G(U)_{\red}$ has to be checked as the reverse inclusion is ensured by the equality $\Lie(U) = \mathfrak{u}$. 

By Zariski density as $k$ is algebraically closed and because the schemes we consider are locally of finite type we only need to do the reasoning on $k$-points (see \cite[Corollaire 10.4.8]{EGA4c}). Let $g \in N_G(\mathfrak{u})_{\red}(k)$, and let $h \in U(k)$. As $U := \exp_{\mathfrak{b}}(\mathfrak{u})$ there exists an element $x \in \mathfrak{u}(k)$ such that $h = \exp_{\mathfrak{b}}(x) = \exp(x)$ (where the last equality comes from the preamble of \cite[Section 3.1]{JEA1} and Remark \ref{sous_groupe_engendre} because $x \in \mathfrak{u}(k)$ is a closed point). We still make use of the aforementioned remark to obtain that $\Ad(g)(h) = \Ad(g)(\exp(x)) = \exp(\Ad(g)(x)) \in U(k)$ because $g \in N_G(\mathfrak{u})_{\red}(k)$ (where the last equality is obtained by making use of the $G$-equivariance of $\exp$).
\end{proof}

The $\fppf$-formalism of Remark \ref{sous_groupe_engendre} allows us to obtain the equality at the group level:

\begin{lemma}
The subgroup $N_G(\mathfrak{u})$ normalises $U$. In particular the equality $N_G(U) = N_G(\mathfrak{u})$ holds true.
\label{inclusion_normalisateurs_U_et_u}
	\end{lemma}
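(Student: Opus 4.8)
The plan is to establish the inclusion $N_G(\mathfrak{u}) \subseteq N_G(U)$ at the level of functors of points; the reverse inclusion $N_G(U) \subseteq N_G(\mathfrak{u})$ is already granted by Lemma \ref{reminder_normaliser}, since $\Lie(U) = \mathfrak{u}$ forces $N_G(U) \subseteq N_G(\mathfrak{Lie}(U)) = N_G(\mathfrak{u})$. The starting point is the description of $U$ recorded in Remark \ref{sous_groupe_engendre}: the group $U = \widetilde{U}$ is the subgroup of $G$ generated, for the $\fppf$-topology, by the image of the morphism $\psi_{\mathfrak{u}} : W(\mathfrak{u}) \times \mathbb{G}_a \to G$, $(x,t) \mapsto \exp(tx)$. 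Because conjugation by a section of $N_G(\mathfrak{u})$ is an automorphism of $G$, and because the $\fppf$-generated subgroup is functorial in its generating morphism (see \cite[VIB, Proposition 7.1]{SGA31}), it suffices to check that such a conjugation carries the image of $\psi_{\mathfrak{u}}$ into $U$.

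First I would fix a $k$-algebra $R$ and a section $g \in N_G(\mathfrak{u})(R)$, together with a point $(x,t) \in (W(\mathfrak{u}) \times \mathbb{G}_a)(R)$. By the very definition of the normaliser scheme, $g$ stabilises $\mathfrak{u}$ functorially, so $\Ad(g)(x) \in \mathfrak{u}(R) = W(\mathfrak{u})(R)$. The key tool is then the $G$-equivariance of the exponential already used in the proof of Lemma \ref{egalite_normalisateurs_reduits}; being an identity of morphisms of schemes, it holds over every $R$, giving
\[\Ad(g)\bigl(\exp(tx)\bigr) = \exp\bigl(t\,\Ad(g)(x)\bigr).\]
Since $\bigl(\Ad(g)(x),\, t\bigr)$ is again a point of $W(\mathfrak{u}) \times \mathbb{G}_a$, the right-hand side lies in the image of $\psi_{\mathfrak{u}}$, hence in $U(R)$. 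Thus conjugation by $g$ maps the generating image of $\psi_{\mathfrak{u}}$ into $U$; applying the same reasoning to $g^{-1}$ and using that $\Ad(g)$ is an automorphism shows that $\Ad(g)$ stabilises the $\fppf$-generated subgroup $U$, that is $g \in N_G(U)(R)$.

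Performing this for arbitrary $R$ yields $N_G(\mathfrak{u}) \subseteq N_G(U)$, and combining with the reverse inclusion gives the equality $N_G(U) = N_G(\mathfrak{u})$, so in particular $N_G(\mathfrak{u})$ normalises $U$. The main obstacle is to ensure that the equivariance $\Ad(g)\exp(tx) = \exp(t\,\Ad(g)(x))$ is invoked at the full scheme-theoretic level rather than merely on $k$-points: unlike the reduced-part statement of Lemma \ref{egalite_normalisateurs_reduits}, here one cannot appeal to Zariski density of $N_G(\mathfrak{u})_{\red}(k)$, precisely because the aim is to control the possibly non-reduced scheme $N_G(\mathfrak{u})$. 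This is exactly why the $\fppf$-generation of $U$ from the scheme morphism $\psi_{\mathfrak{u}}$ is indispensable: it propagates the functorial $G$-equivariance of $\exp$ from the generators to all of $U$ over an arbitrary base, thereby upgrading the reduced-level identity of Lemma \ref{egalite_normalisateurs_reduits} to an equality of group schemes.
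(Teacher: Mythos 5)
Your proposal is correct and follows essentially the same route as the paper: both use the $\fppf$-generation of $U$ by the morphism $\psi_{\mathfrak{u}}$ together with the scheme-level $G$-equivariance of the exponential to show that any $R$-point of $N_G(\mathfrak{u})$ carries the generators of $U$ back into $U$, and both obtain the reverse inclusion from Lemma \ref{reminder_normaliser}. The only cosmetic difference is that the paper writes an arbitrary $h \in U(R)$ as a product of generators over an $\fppf$-cover and conjugates term by term, whereas you invoke functoriality of the generated subgroup directly; these are the same argument.
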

	
	\begin{proof}
	Let $R$ be a $k$-algebra. For any $g \in N_G(\mathfrak{u})(R)$ and $h \in U(R)$, there exists an $\fppf$-covering $S \rightarrow R$ such that $h_S = \psi(x_1,s_1) \cdots \psi(x_n,s_n)$ where $x_i \in \mathfrak{u}_R \otimes_R S $ and $s_i \in S$. The $G$-equivariance of $\exp_{x_i}$ for any $i\in \{1, \cdots,n\}$ then leads to
	\[(\Ad(g)h)_S = \prod_{i=1}^n \Ad(g_S)\psi(x_i,s_i) = \prod_{i=1}^n \psi\left(\Ad(g_S)x_i,s_i\right) \in U(S)\cap G(R) = U(R).\]
Note that the equality $U(S)\cap G(R) = U(R)$ is satisfied because $U$ is generated by $\psi$ as an $\fppf$-sheaf. So we have obtained the inclusion of normalisers $N_G(\mathfrak{u})(R) \subseteq N_G(U)(R)$ for any $k$-algebra $R$. Yoneda's Lemma then allows us to conclude that $N_G(\mathfrak{u}) \subseteq N_G(U)$. As the Lie algebra $\mathfrak{u}$ derives from a unipotent smooth connected subgroup $U \subseteq G$ according to \cite[Proposition 3.1]{JEA1}, the inclusion $N_G(U) \subseteq N_{G}(\mathfrak{u})$ is still satisfied (see for instance Lemma \ref{reminder_normaliser}), hence the equality of normalisers.
	\end{proof}

	\subsection{Proof of Theorem \ref{Morozov_p_sup}}
	\label{proof_Morozov_psup}

\begin{proof}[Proof of Theorem \ref{Morozov_p_sup}]
As $p>\h(G)$ the restricted $p$-nil $p$-subalgebra $\mathfrak{u}$ can be integrated into a unipotent smooth connected subgroup $U\subset G$ (see \cite[Proposition 3.1]{JEA1}). This in particular means that $U\subseteq N_G(U)^0_{\red}$. Let us denote by $V$ the unipotent radical of $N_G(U)^0_{\red}$. The inclusion $U \subseteq V$ is necessarily satisfied. At the Lie algebras level this leads to the inclusion $\mathfrak{u} \subseteq \mathfrak{v}$.

By Veisfeiler--Borel--Tits Theorem the first point of Theorem \ref{Morozov_p_sup} is obtained if one can show that $\mathfrak{u} = \mathfrak{v}$ holds true. Note that this allows us to conclude that $U= V$ as $U$ and $V$ are smooth connected subgroups of $G$ (see \cite[II, \S5, 5.5]{DG}). 
 
Set $W := \Rad_U(N_G(\mathfrak{u})^{0}_{\red})$. According to \cite[Lemma 2.12]{JEA1} the Lie algebra $\mathfrak{w}:= \Lie(W)$ is a $p$-nil $p$-ideal of $\Lie(N_G(\mathfrak{u})^0_{\red}) \subseteq \Lie(N_G(\mathfrak{u}))$. As the subgroup $N_G(\mathfrak{u})$  is infinitesimally saturated, \cite[Theorem 2.5]{BDP} ensures that $W$ is a normal subgroup of $N_G(\mathfrak{u})$. Hence $\mathfrak{w}$ is a $p$-nil $p$-ideal of $\Lie(N_G(\mathfrak{u})) = N_{\mathfrak{g}}(\mathfrak{u})$. It is therefore contained in the $p$-radical of $N_{\mathfrak{g}}(\mathfrak{u})$, which is itself contained in the set of all $p$-nilpotent elements of $\rad(N_{\mathfrak{g}}(\mathfrak{u}))$ according to \cite[Lemma 2.5]{JEA1} ii). The latter is nothing but $\mathfrak{u}$ by assumption, whence the conclusion.

To summarise, we have obtained the following inclusions: $\mathfrak{w} \subseteq \mathfrak{u} \subseteq \mathfrak{v}$.

According to Lemma \ref{egalite_normalisateurs_reduits} the reduced normalisers $N_G(U)_{\red}$ and $N_G(\mathfrak{u})_{\red}$ are the same, so are respectively their connected components and their unipotent radicals. In other words, the groups $V$ and $W$ are equal, whence the equality of Lie algebras $\mathfrak{w}= \mathfrak{v}= \mathfrak{u}$. The groups $U \subseteq V$ are smooth and connected, therefore the equality of their Lie algebra lifts to an equality of groups, namely we have shown that $U = V$ (see \cite[II, 5.5]{DG}).

In other words $U$ is equal to $\Rad_U((N_G(U))^{0}_{\red})$. Its smooth connected normaliser is therefore a parabolic subgroup of $G$ (see to \cite[Corollaire 3.2]{TB}), we denote it $P_G(U):= N_G(U)^0_{\red}$.

As the normaliser $N_G(U)$ is infinitesimally saturated, the subgroup $N_G(U)_{\red}^0$ is normal in $N_G(U)$ (see \cite[Theorem 2.5]{BDP}). In other words one has $N_G(U) \subseteq N_G(N_G(U)^0_{\red})$. This leads to the equality $N_G(U)^0_{\red}=N_G(U)$ because $N_G(U)^{0}_{\red}$ is its own normaliser (see \cite[XXII, Corollaire 5.8.5]{SGA33}). Whence $N_G(U)$ is a parabolic subgroup of $G$.

The same reasoning as above also allows us to conclude that $N_G(\mathfrak{u}) = N_G(\mathfrak{u})^0_{\red}$ is a parabolic subgroup of $G$ (which is proper if $\mathfrak{u}\neq 0$. Note that this reasoning can be done because $N_G(\mathfrak{u})$ is infinitesimally saturated (see Lemma \ref{sat_inf_normalisateur}). Therefore the following equalities are satisfied:
\[N_G(\mathfrak{u}) = N_G(\mathfrak{u})^0_{\red} = N_G(U)^0_{\red} = N_G(U) = P_G(U),\]
\noindent and $N_{\mathfrak{g}}(\mathfrak{u}) = \Lie(N_{G}(\mathfrak{u})) = \Lie(P_G(U))$ is indeed a parabolic subalgebra of $\mathfrak{g}$ (namely it is the Lie algebra of the parabolic subgroup of $G$ obtained by applying the Borel--Tits theorem to $U$).

Furthermore Theorem \ref{obstruction} holds true because the normaliser $N_G(\mathfrak{u})^0_{\red} = N_G(U)$ are smooth. The normaliser $N_G(U)$ is therefore the optimal parabolic subgroup $P_G(\lambda_{\mathfrak{u}})$ obtained by Kempf--Rousseau theory (see Section \ref{Kempf-Rousseau}).
\end{proof}

\begin{remark}
Lemma \ref{inclusion_normalisateurs_U_et_u}, together with the $\fppf$-formalism it introduces, allows to drastically simplify the proof of Theorem \ref{Morozov_p_sup}. Indeed it ensures the equality of $N_G(U)$ and $N_G(\mathfrak{u})$ (and not only that of their smooth part). Lemma \cite[2.13]{JEA1} then allows us to conclude. The notations are the same as in the proof above. Because $U$ is unipotent smooth and connected implied the inclusion $U \subseteq V:= \Rad_U(N_G(U)^0_{\red})$ holds true. This leads to an inclusion of Lie algebras $\mathfrak{u}\subseteq \mathfrak{v}$. By assumption $\mathfrak{u} = \rad_p(N_{\mathfrak{g}}(\mathfrak{u}))$. Now, as $\mathfrak{v}:= \Lie(\Rad_U(N_{\mathfrak{g}}(\mathfrak{u})) \subset N_{\mathfrak{g}}(\mathfrak{u})$ is a $p$-nil ideal (see \cite[Lemma 2.12]{JEA1}) the inclusion $\mathfrak{v}\subseteq \mathfrak{u}$ is trivially satisfied. This leads to the equality $\mathfrak{u}=\mathfrak{v}$. We deduce from the latter (and as in the proof), that $N_G(U)$ is a parabolic subgroup of $G$. At this stage of the proof one already knew that $N_G(U) = N_G(\mathfrak{u})$ so that no additional argument is no longer needed to show that $N_G(\mathfrak{u})$ is parabolic.
\label{remaruqe_egalite_normalisateurs}
\end{remark}

\subsubsection{Remarks on the characteristic  \texorpdfstring{$0$}{Lg} or \texorpdfstring{$p$}{Lg} high enough}
\label{rem_car_0_ou_grande}

Let $G$ be a simple and simply connected $k$-group. When $p= 0$ or $p> 2\h(G) - 2$, the adjoint representation is compatible with the exponential map. In other words, any $\mathfrak{g}$-nilpotent element $x \in \mathfrak{g}$ satisfies $\Ad(\exp(tX))\cdot y = \exp(t\ad(X))\cdot y$ for any $y \in \mathfrak{g}$. In characteristic $p>0$, the assumption on $p$ is required because the $p$-structure of $\mathfrak{g}$ in particular leads to the following identity for any $x$ and $y \in \mathfrak{g}$ (see \cite[II, \S 7, 3.3]{DG}):
\[(x+y)^{[p]} = x^{[p]} + y^{[p]} - W(x,y).\]
This implies that the equality $\exp(x)\exp(y) =\exp(x+y)$ is no longer satisfied in general, when $x$ and $y$ are any $p$-nilpotent elements of $\mathfrak{g}$ (see \cite[Remarque 4.1.7]{SER1}). The assumption $p>2\h(G)-2$ allows to settle this issue as then the adjoint representation $\Ad$ is of low heigh. The proof of \cite[Lemma 4.3]{HS} details the reasoning when $G$ is of type $A_{n+1}$, this easily extends to the other types: let $x$ be a $p$-nilpotent element of $\mathfrak{g}$, denote by $l$ the left multiplication by $tx$ and by $r$ the right multiplication by $-tx$. Then 
\[\Ad(\exp(tx)) =  \exp(l)\exp(r) = \exp(l + r - \sum_{n=1}^{p-1} c_n l^{[n]} r^{[p-n]}),\] where the $c_n \in k$ are non trivial coefficients, whence the vanishing identity
	\begin{alignat*}{3}
	W_p(l,r) \: & = \sum_{n=1}^{p-1} c_n l^{[n]}r^{[p-n]}\\
			 \: & = \sum_{n=1}^{\h(G)-1} c_n l^{[n]}r^{[p-n]} + \sum_{n=\h(G)}^{p-1} c_n l^{[n]}r^{[p-n]} = 0
	\end{alignat*}
as in each sum one of the factor is raised to a power greater than $\h(G)$, thus vanishes, because when $p>\h(G)$, any nilpotent element $y \in \mathfrak{g}$ is killed by the $[p]$-power, namely $y^{[p]} = 0$ (see \cite[4.4]{MAUS}). 

Such a compatibility drastically simplifies the proof of Lemmas \ref{sat_inf_normalisateur} et \ref{sat_inf_norm_groupe}. Let us explain why in more details. 

Let $\mathfrak{u}$ be a nilpotent subalgebra of $\mathfrak{g}$. According to \cite[Proposition 3.1]{JEA1} this subalgebra integrates into a unipotent smooth connected subgroup of $G$, denoted by $U:= \exp_{\mathfrak{b}}(\mathfrak{u})$, where $\mathfrak{b}:=\Lie(B)\subseteq \mathfrak{g}$ is a Borel subalgebra such that $\mathfrak{u} \subseteq \radu(B)$. A priori only the inclusion $\Lie(N_G(U)) \subseteq N_{\mathfrak{g}}(\mathfrak{u})$ holds true (see Lemma \ref{reminder_normaliser}) but the compatibility between the adjoint representation and the exponential map ensures that these normalisers are actually equal:
\begin{lemma}
Let $k$ be an algebraically closed field of characteristic $p>0$ and $G$ be a simple and simply connected $k$-group such that $p>2\h(G)-2$. Let also $\mathfrak{u}$ be a $p$-nil subalgebra of $\mathfrak{g}$. Then the equality $N_{\mathfrak{g}}(\mathfrak{u})= \Lie(N_G(U))$ holds true.
\label{égalité_lie_normalisateurs}
\end{lemma}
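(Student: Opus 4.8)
The plan is to prove the two inclusions separately, the forward one being free and the reverse one being where the hypothesis $p>2\h(G)-2$ enters. First I would note that $\Lie(N_G(U)) \subseteq N_{\mathfrak{g}}(\mathfrak{u})$ holds with no restriction on $p$, directly from Lemma \ref{reminder_normaliser}, since $U$ is smooth. For the reverse inclusion I would exploit the explicit description of $\Lie(N_G(U))$ from the same lemma: because $U$ is smooth and $k$ is algebraically closed, the group $U(k)$ is Zariski-dense in $U$ (see \cite[Corollaire 10.4.8]{EGA4c}), so that
\[\Lie(N_G(U)) = \{x \in \mathfrak{g} \mid \Ad(h)(x) - x \in \mathfrak{u} \text{ for all } h \in U(k)\}.\]
It then suffices to show that every $x \in N_{\mathfrak{g}}(\mathfrak{u})$ lies in this set.

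The crux of the argument will be the compatibility between $\Ad$ and the exponential recalled in Section \ref{rem_car_0_ou_grande}. Taking an element $u \in \mathfrak{u}(k)$, which is $p$-nilpotent with $u^{[p]} = 0$ (as $p > \h(G)$, by \cite[4.4]{MAUS}), I would use the identity $\Ad(\exp(u)) = \exp(\ad(u)) = \sum_{n=0}^{p-1} \frac{1}{n!}(\ad u)^n$, a finite sum since $\ad(u)^p = \ad(u^{[p]}) = 0$. For $x \in N_{\mathfrak{g}}(\mathfrak{u})$ one has $(\ad u)(x) = -[x,u] \in \mathfrak{u}$, and since $\mathfrak{u}$ is a subalgebra, $(\ad u)^n(x) \in \mathfrak{u}$ for every $n \geq 1$; hence $\Ad(\exp(u))(x) - x = \sum_{n=1}^{p-1}\frac{1}{n!}(\ad u)^n(x) \in \mathfrak{u}$. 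By Remark \ref{sous_groupe_engendre} a general $h \in U(k)$ is a product of such exponentials, and since each $\exp(u) \in U(k) \subseteq N_G(\mathfrak{u})(k)$ preserves $\mathfrak{u}$, a straightforward induction would propagate the relation $\Ad(h)(x) - x \in \mathfrak{u}$ to all such products. This places $x$ in $\Lie(N_G(U))$ and finishes the reverse inclusion.

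The one genuinely delicate ingredient is the $\Ad$--$\exp$ compatibility itself: it is precisely the hypothesis $p > 2\h(G)-2$ that forces the Jacobson correction term $W(\cdot,\cdot)$ to vanish and makes $\Ad(\exp(u)) = \exp(\ad(u))$ hold (for smaller $p$ this fails, and the two normalisers can genuinely differ). Once that identity is available, everything else is a formal manipulation of the truncated exponential series together with the Zariski-density description of $\Lie(N_G(U))$, so I expect no further obstacle.
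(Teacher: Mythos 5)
Your proof is correct and follows essentially the same route as the paper's: both reduce the reverse inclusion to the Zariski-density description of $\Lie(N_G(U))$ from Lemma \ref{reminder_normaliser} and then compute $\Ad(\exp(y))(x)-x=\sum_{1\leq n<p}\frac{1}{n!}\ad(y)^n(x)\in\mathfrak{u}$ using the $\Ad$--$\exp$ compatibility valid for $p>2\h(G)-2$. The only (harmless) difference is that the paper writes every $k$-point of $U=\exp_{\mathfrak{b}}(\mathfrak{u})$ as a single exponential, whereas you treat products of exponentials and close the argument by induction.
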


\begin{proof}
One needs to show that any $x \in \in N_{\mathfrak{g}}(\mathfrak{u})$ actually belongs to $\Lie(N_G(U))$. According to Lemma \ref{reminder_normaliser}, this amounts to show that $\Ad(u)x - x \in \mathfrak{u}$ for any $u \in U(k)$. Let us thus consider $u \in U(k)$. As $U = \exp_{\mathfrak{b}}(\mathfrak{u})$ integrates $\mathfrak{u}$, the same reasons as in the proof of Lemma \ref{egalite_normalisateurs_reduits}, ensures the existence of an element $y \in \mathfrak{u}$ such that $u = \exp(y)$. Therefore we have:
\begin{alignat*}{3}
\Ad(u)x - x \: & = \Ad(\exp(y))(x) - x\\
 \: & = \exp(\ad(y))(x) -x, \text{ because } \Ad \text{ and } \exp \text{ are compatible,} \\
 \: & = \sum_{0 \leq n<p}\frac{\ad^n(y)}{n!} (x) -x \\
 \: & = \sum_{1\leq n<p}\frac{\ad^{n}(y)}{n!} (x) \in \mathfrak{u} \\
\end{alignat*}
because $x \in N_{\mathfrak{g}}(\mathfrak{u})$ by assumption. When $\Ad$ and $\exp$ are compatible (so in particular when $p>2\h(G)-2$) the equality $N_{\mathfrak{g}}(\mathfrak{u}) = \Lie(N_G(U))$ is thus satisfied.
\end{proof}

When $p>2\h(G)-2$ Lemmas \ref{sat_inf_normalisateur} and \ref{sat_inf_norm_groupe} are thus the same and their proof is immediate:

\begin{proof}[Proof of Lemmas \ref{sat_inf_normalisateur} and \ref{sat_inf_norm_groupe} when $p>2\h(G)-2$]
According to what precedes, and by making use of Lemma \ref{reminder_normaliser}, one can show that the equalities $\Lie(N_G(\mathfrak{u})) = N_{\mathfrak{g}}(\mathfrak{u})= \Lie(N_G(U))$ are satisfied. The purpose is to show that given any $p$-nilpotent element $x \in N_{\mathfrak{g}}(\mathfrak{u})$, the $t$-power map $t \mapsto \exp(tx)$ factors through $N_{G}(\mathfrak{u})$. Let $y\in \mathfrak{u}$ and $x \in \mathfrak{g}$ be a $p$-nilpotent element. As the adjoint representation is compatible with the exponential map, it is so with the $t$-power map. This leads to:
	 \begin{alignat*}{3}
	 \Ad(\exp(tx))(y) \: & = \exp(\ad(tx))(y)\\
	 \: & = \sum_{0\leq n <p} \frac{\ad(tx)^n}{n!}(y) \in \mathfrak{u}\\
	 \end{alignat*}
because $x \in N_{\mathfrak{g}}(\mathfrak{u})$ by assumption.
\end{proof}

\begin{remark}
When $G$ is any reductive $k$-group, the following assumptions allow, in a certain framework, to weaken the condition on $p$ without losing the compatibility between a representation of $G$ and the exponential map.

		Let us denote by $\haut_G(\mathfrak{g}):= \max\{\sum_{\alpha>0}\langle \lambda, \alpha^{\vee}\rangle\}$ the Dynkin height of the adjoint representation. The restriction on $p$ amounts to require the adjoint representation to be of low height. This means that $p > \haut_G(\mathfrak{g})$ as $\haut_G(\mathfrak{g}) = 2 \times \sum_{i=1}^{n} \bar{\omega}_i = 2\h(G)-2$, where $\bar{\omega}_i$ are the fundamental weights for the $T$-action (see \cite[Lecture 4, example of Theorem 8]{SER}). 
		
		More generally, one can weaken the restrictions on $p$ by considering an almost faithful representation $\rho$ (i.e. a representation whose kernel is of multiplicative type) and whose Dynkin height is minimal. In this case, the exponential map and the representation $\rho$ are compatible (see \cite[4.5 et 4.6]{BDP}) and the previous reasoning can be extended to this context. Let us note that the adjoint representation is always almost faithful when $G$ is a reductive $k$-group because its kernel is nothing but the center of $G$, which is of multiplicative type. 
		When $G$ is simple and simply connected, if $G$ is not of type $F_4$, $E_6$, $E_7$ or $E_8$, a low height almost faithful representation always exists when $p>\h(G)$ (see the remark that follows Theorem 4.7 in \cite{BDP}).

	\label{petite_hauteur}
\end{remark}

\begin{remark}
The lack of compatibility between the exponential map and the adjoint representation has been studied in \cite[Chapter 4]{Kra}. In paragraph $4.2$, the author defines an operator, denoted by $\had$, compatible with the truncated exponential.

\begin{defn}
A subalgebra $\mathfrak{u} \subseteq \mathfrak{g}$ is $\had$-weakly saturated if and only if $\had(x)(\mathfrak{u}) \subseteq \mathfrak{u}$ for any $p$-nilpotent element $x \in N_{\mathfrak{g}}(\mathfrak{u})$. 
\end{defn}

In practice when $p>\h(G)$, a restricted $p$-nil $p$-subalgebra $\mathfrak{u} \subset \mathfrak{g}$ is $\had$-weakly saturated if and only if its normaliser $N_G(\mathfrak{u})$ is infinitesimally saturated. One inclusion is clear: assume $\mathfrak{u}$ to be $\had$-weakly saturated and let $y \in \mathfrak{u}$. For any $p$-nilpotent element $x \in N_{\mathfrak{g}}(\mathfrak{u})$ and for any $t \in R$ (where $R$ is a $k$-algebra) we have:
\begin{alignat*}{3}
\Ad(\exp(tx))(y) \:& = \exp(t\had(x))(y)\\
\:& = \left(\sum_{i=0}^{p-1} \frac{(t \had(x))^i}{i!}\right)(y) \in \mathfrak{u},\\
\end{alignat*}
\noindent as any coefficient $\had^i(x)(y) \in \mathfrak{u}$. Conversely if $N_{G}(\mathfrak{u})$ is infinitesimally saturated let $x$ be a $p$-nilpotent element of $N_{\mathfrak{g}}(\mathfrak{u})$. For any $y \in \mathfrak{u}$ and $t \in \mathbb{G}_a$ we have:
\begin{alignat*}{3}
\Ad(\exp(tx))(y)\: & = \left(\sum_{i=0}^{p-1} \frac{(t \had(x))^i}{i!}\right)(y) \in \mathfrak{u},\\
\end{alignat*}
\noindent because $N_G(\mathfrak{u})$ is infinitesimally saturated. 
In particular $\had(x)(y) \in \mathfrak{u}$ thus $\mathfrak{u}$ is weakly $\had$-saturated. Let us remark that in particular Lemma \ref{sat_inf_normalisateur} allows us to conclude that if $p>\h(G)$ any restricted $p$-nil $p$-subalgebra of $\mathfrak{g}$ is $\had$-saturated.
\end{remark}

\subsection{Proof of Corollaries \ref{cor_p_elements_p_sup} and \ref{cor_towers_p_sup}}
\label{section_cor_p_grand}

Corollary \ref{cor_p_elements_p_sup} follows from Theorem \ref{Morozov_p_sup} thanks to the following result:

\begin{corollary}[of {\cite[Lemma 2.13]{JEA1}}]
Let $k$ be an algebraically closed field of characteristic $p>0$ and let $G$ be a reductive $k$-group. Assume that $p>\h(G)$. Let $H \subseteq G$ be an infinitesimally saturated group. Then the following equalities hold true:
\begin{alignat*}{2}
\rad_p(\mathfrak{h}) \: & = \{x \in \rad(\mathfrak{h}) \mid  x \text{ is } p\text{-nilpotent}\} \\
\: & =  \{x \in \rad_p(\Lie(H_{\red})) \mid  x \text{ is } p\text{-nilpotent}\} = \rad_p(\Lie(H_{\red})) =\radu(H^{0}_{\red}).
\end{alignat*}
\label{sat_inf_et_rad_p}
\end{corollary}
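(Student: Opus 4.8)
The plan is to deduce the entire chain of equalities by feeding the structure of an infinitesimally saturated subgroup into the two radical lemmas already established, namely Lemma \ref{p_rad_inf_sat} and Lemma \ref{reminder_rad}. First I would record the structural consequence of infinitesimal saturation: since $p>\h(G)$ and $H$ is infinitesimally saturated, \cite[Theorem 2.5]{BDP} applies and gives that $H^0_{\red}$ is normal in $H$ with quotient $S:=H/H^0_{\red}$ of multiplicative type, and moreover that $\Rad_U(H^0_{\red})$ is a normal subgroup of $H$ (this is exactly how \cite[Theorem 2.5]{BDP} is invoked in the proof of Theorem \ref{parab_associé}). Thus $H$ sits in an exact sequence $1 \to H^0_{\red} \to H \to S \to 1$ with $H^0_{\red}$ smooth connected, $S$ of multiplicative type, and $\iota(\Rad_U(H^0_{\red}))$ normal in $H$, where $\iota$ denotes the inclusion.

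This is precisely the hypothesis configuration of Lemma \ref{p_rad_inf_sat} (the field $k$ is algebraically closed hence perfect, and $p>\h(G)$ forces $p\geq 3$ as soon as $G$ has a non-trivial semisimple part, so the characteristic condition there is met). Applying it with $H^0_{\red}$ in the role of the smooth connected kernel yields
\[\rad_p(\mathfrak{h}) = \{x \in \rad(\mathfrak{h}) \mid x \text{ is } p\text{-nilpotent}\} = \Lie(\iota)\bigl(\rad_p(\Lie(H^0_{\red}))\bigr).\]
Since $H^0_{\red}$ and $H_{\red}$ have the same identity component, $\Lie(H^0_{\red}) = \Lie(H_{\red})$, and $\Lie(\iota)$ is the injection $\Lie(H_{\red}) \hookrightarrow \mathfrak{h}$; identifying $\Lie(H_{\red})$ with its image, the rightmost term is $\rad_p(\Lie(H_{\red}))$. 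This already produces the first equality of the statement and identifies the set of $p$-nilpotent elements of $\rad(\mathfrak{h})$ with $\rad_p(\Lie(H_{\red}))$.

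It then remains to splice in the two bookkeeping equalities. The equality $\{x \in \rad_p(\Lie(H_{\red})) \mid x \text{ is } p\text{-nilpotent}\} = \rad_p(\Lie(H_{\red}))$ is immediate, since $\rad_p$ is by definition a $p$-nil ideal, so each of its elements is already $p$-nilpotent. Finally, applying Lemma \ref{reminder_rad}(2) to the smooth connected group $H^0_{\red}$ gives $\rad_p(\Lie(H_{\red})) = \radu(H^0_{\red})$, which is the last equality. Concatenating these three inputs yields the full chain.

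The main obstacle is checking the hypothesis of Lemma \ref{reminder_rad}(2): it demands that $p$ be separably good for the reductive quotient $H^0_{\red}/\Rad_U(H^0_{\red})$. I would argue that this is secured by the standing assumption $p>\h(G)$, which makes $p$ very good, hence separably good, for $G$, and propagate this goodness to the reductive subquotient of $H^0_{\red}$ relevant here; it is precisely this propagation that one borrows from \cite[Lemma 2.13]{Jea1}. Once that single point is granted, everything else is a purely formal concatenation of Lemmas \ref{p_rad_inf_sat} and \ref{reminder_rad}, with infinitesimal saturation entering only to furnish the multiplicative-type extension structure required by the former.
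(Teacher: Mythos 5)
Your proof is correct and follows essentially the same route as the paper's: invoke \cite[Theorem 2.5]{BDP} to realise $H$ as a multiplicative-type extension of $H^0_{\red}$ with $\Rad_U(H^0_{\red})$ normal in $H$, feed this into Lemma \ref{p_rad_inf_sat}, and conclude via $\Lie(H^0_{\red})=\Lie(H_{\red})$. The only cosmetic difference is that you obtain the final equality $\rad_p(\Lie(H_{\red}))=\radu(H^{0}_{\red})$ by citing Lemma \ref{reminder_rad}(2), whereas the paper reads it off from inside the proof of Lemma \ref{p_rad_inf_sat}; both ultimately rest on \cite[Lemma 2.13]{Jea1}, and your handling of the separably-good hypothesis there is no less careful than the paper's own appeal to Table \ref{Hypothèses_sur_la caractéristiqu_ pour_un_groupe_simple}.
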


\begin{proof}
Let us note that the assumptions of Lemma \ref{p_rad_inf_sat} are satisfied here as:
\begin{enumerate}
\item the unipotent radical of $H^0_{\red}$ is normal in $H$ and the quotient $H/H^0_{\red}$ is a group of multiplicative type according to \cite[Theorem 2.5]{BDP}, because $H$ is infinitesimally saturated,
\item the characteristic is always as requested in Lemma \ref{p_rad_inf_sat} because we assume the characteristic $p$ to be greater than $\h(G)$. This can be checked with Table \ref{Hypothèses_sur_la caractéristiqu_ pour_un_groupe_simple}.
\end{enumerate}
By making use of the identity $\Lie(H^0_{\red}) = \Lie(H_{\red})$, we obtain the desired equalities.
\end{proof}

\begin{proof}[Proof of Corollary \ref{cor_p_elements_p_sup}]
The normaliser of $\mathfrak{u}$ is infinitesimally saturated according to Lemma \ref{sat_inf_normalisateur} because $p>\h(G)$. Corollary \ref{sat_inf_et_rad_p} thus ensures that $\mathfrak{u}$ is a subalgebra of $N_{\mathfrak{g}}(\mathfrak{u})$ given by the set of all $p$-nilpotent elements of $\rad(N_{\mathfrak{g}}(\mathfrak{u}))$. Theorem \ref{Morozov_p_sup} hence holds true and allows to conclude.
\end{proof}

\begin{remark}
To show Corollary \ref{cor_p_elements_p_sup} one needs to combine Theorem \ref{Morozov_p_sup} with Proposition \ref{sat_inf_et_rad_p}. This in particular underlines that, when $k$ is of characteristic $p>0$, the $p$-radical of a Lie algebra that derives from a reductive $k$-group is indeed the good analogue of the nilradical of a semisimple Lie algebra in characteristic $0$. Nevertheless, a much simpler proof is possible, by making use of the exponential map as defined in \cite[Proposition 3.1]{JEA1}: by definition the $p$-radical of $N_{\mathfrak{g}}(\mathfrak{u})$ is a $p$-nil $p$-ideal. According to \cite[Proposition 3.1]{JEA1} the ideal $\mathfrak{u}$ can be integrated into a unipotent smooth connected subgroup $U\subseteq G$. By construction $U$ is normal in $N_G(U)_{\red}^0$ it is therefore necessarily contained into the unipotent radical of $N_G(U)^0_{\red}$, denoted by $V$. The inclusion being preserved by derivation, this leads to the inclusion of Lie algebras $\mathfrak{u} \subseteq \mathfrak{v}$. According to Lemma \ref{sat_inf_norm_groupe} the group $N_G(U)$ is infinitesimally saturated. The subgroups $V$ and $N_G(U)_{\red}^0$ are therefore normal in $N_G(U)$ (see \cite[Theorem 2.5]{BDP}). Thus $\mathfrak{v}$ is an ideal of $\Lie(N_G(U))$. As the normalisers $N_G(U)$ and $N_G(\mathfrak{u})$ are equal (see Lemma \ref{inclusion_normalisateurs_U_et_u}), we actually have shown that $\Lie(N_G(U)) = \Lie(N_{G}(\mathfrak{u})) = N_{\mathfrak{g}}(\mathfrak{u})$ (where the last equality is ensured by Lemma \ref{reminder_normaliser}). The Lie algebra of $V$ is therefore a $p$-nil $p$-ideal of $N_{\mathfrak{g}}(\mathfrak{u})$ (see \cite[Lemma 2.12]{JEA1}). Hence it is necessarily contained in $\rad_p(N_{\mathfrak{g}}(\mathfrak{u}))$ which is the Lie algebra $\mathfrak{u}$ itself by assumption. In other words we have shown the equality $\mathfrak{u} = \mathfrak{v}$, whence the equality of the smooth connected groups $U$ and $V$ (by making use of \cite[II, \S5, 5.5]{DG}). As in the proof of Theorem \ref{Morozov_p_sup}, Veisfeiler--Borel--Tits Theorem (see \cite[Corollaire 3.2]{TB}) allows us to conclude.
\end{remark}

The proof of Corollary \ref{cor_towers_p_sup} is now immediate:

\begin{proof}[Proof of Corollary \ref{cor_towers_p_sup}]
By n\oe therianity of $\mathfrak{g}$ there exists an integer $i$ for which the sequences $(\mathfrak{u}_i)_{i \in \mathbb{N}}$ and $(\mathfrak{q}_i)_{i\in \mathbb{N}}$ of the statement stabilise. Let us denote by $\mathfrak{u}_{\infty}$, respectively $\mathfrak{q}_{\infty}$, the limit objects.

According to Corollary \ref{sat_inf_et_rad_p}, as $N_{\mathfrak{g}}(\mathfrak{u}_{\infty})$ is infinitesimally saturated (see Lemma \ref{sat_inf_normalisateur}) the Lie algebra $\mathfrak{u}_{\infty}$ is the $p$-radical of its normaliser $N_{\mathfrak{g}}(\mathfrak{u}_{\infty})$. According to Theorem \ref{Morozov_p_sup} this normaliser is a parabolic subalgebra of $\mathfrak{g}$. The aforementioned Corollary ensures that:
\begin{itemize}
\item this algebra derives from the subgroup $P(U_{\infty})$ obtained by applying Veisfeiler--Borel--Tits Theorem (see \cite[Corollaire 3.2]{TB}) to the group $U_{\infty}$ that integrates $\mathfrak{u}_{\infty}$,
\item this parabolic subalgebra is also the Lie algebra of the optimal parabolic subgroup of $\mathfrak{u}$ denoted by $P_G(\lambda_{\mathfrak{u_{\infty}}})$.
\end{itemize}
\end{proof}

%

%

\subsection{Proof of Corollary \ref{cor_Mor_pgrand}}
\label{section_cor_Mor_pgrand}
Theorem \ref{Morozov_p_sup} allows us to show Corollary \ref{cor_Mor_pgrand}:  

\begin{proof}[Proof of Corollary \ref{cor_Mor_pgrand}]
Let $\mathfrak{u}:= \rad_p(\mathfrak{q})$, there are two possibilities:
\begin{enumerate}
	\item either $\mathfrak{u}=\{0\}$ and so $\mathfrak{q}$ is $p$-reductive;
	\item or $\mathfrak{u}\neq\{0\}$. In this case let us note that the inclusion $\mathfrak{q} \subseteq N_{\mathfrak{g}}(\mathfrak{u})$ is satisfied and that $\mathfrak{q}$ is maximal by assumption. There are again two possibilities:
		\begin{enumerate}
			\item either $N_{\mathfrak{g}}(\mathfrak{u}) = \mathfrak{g}$ which is impossible because $\mathfrak{g}$ is reductive thus $p$-reductive,
			\item or $N_{\mathfrak{g}}(\mathfrak{u}) = \mathfrak{q}$. In this case, as $p>\h(G)$, Theorem \ref{Morozov_p_sup} allows us to conclude that $N_{\mathfrak{g}}(\mathfrak{u})$ is the Lie algebra of a parabolic subgroup of $G$.
		\end{enumerate}
\end{enumerate}
\end{proof}

\section{Analogue and corollaries of Morozov's Theorem in separably good characteristics}
\label{section_morozov_p_inf}

Let $k$ be an algebraically closed field of characteristic $p>0$ and $G$ be a reductive $k$-group. This section is dedicated to show that Morozov's Theorem admits an analogue and corollaries when $p$ is separably good for $G$. Before stating the results we are going to prove, let us briefly remind of some specificities raised by this setting.

We are going to reproduce and adapt arguments developed in Section \ref{chapitre_morozov_p_sup} (in which we assumed $p>\h(G)$). These required the existence of a unipotent smooth connected subgroup of $G$ which integrates $\mathfrak{u}$. Unfortunately, as mentioned in the introduction of this article, when $p$ is separably good for $G$, not any restricted $p$-nil $p$-subalgebra $\mathfrak{u} \subset \mathfrak{g}$ can be integrated into a unipotent smooth connected subgroup of $G$ (see also \cite[Section 3.3]{JEA1}). However, assuming $p$ to be separably good for $G$ ensures the existence of a Springer isomorphism $\phi : \mathcal{N}_{\red}(\mathfrak{g}) \rightarrow \mathcal{V}_{\red}(G)$ and allows us to lift nilpotent elements of $\mathfrak{g}$ to unipotent elements of $G$. This is detailed, for instance, in \cite[Section 3.1]{JEA1}. For any $p$-nilpotent element $x \in \mathfrak{g}$ this punctual integration induces a morphism, called a $t$-power map:
	\begin{alignat*}{3}
		\phi_x : \: & \mathbb{G}_a \: & \rightarrow \: & G,\\
			\:& t \: & \mapsto \: & \phi(tx).
	\end{alignat*} 
\noindent This, combined with the $\fppf$-formalism developed in \cite[VIB, Proposition 7.1]{SGA31}, leads to associate to any restricted $p$-nil $p$-subalgebra $\mathfrak{v} \subset \mathfrak{g}$, a unipotent smooth connected subgroup $J_{\mathfrak{v}}$. This group can be huge in general, but when the subalgebra $\mathfrak{v}$ fulfils the requirements of Statement \ref{analogue_morozov} the subgroup $J_{\mathfrak{v}}$ actually integrates $\mathfrak{v}$ (see \cite[Lemma 5.1]{JEA1}). Moreover as already seen in the previous section, even if this integration exists, nothing ensures that it is compatible with the adjoint representation. This also explains why some extra conditions are required on the normaliser of $N_G(\mathfrak{u})$ in order to settle these issues.

The lack of smoothness of normalisers is also an issue specific to the characteristic $p>0$ framework. Under the assumption $p > \h(G)$, we have shown that any normaliser of any restricted $p$-nil $p$-Lie subalgebra is infinitesimally saturated (see Lemma \ref{sat_inf_normalisateur}). This allowed to settle the lack of smoothness of normalisers by making use of \cite[Theorem 2.5]{BDP}. This property is satisfied when $p>\h(G)$. This can be shown by making use of a structure theorem for unipotent groups because any restricted $p$-nil $p$-subalgebra $\mathfrak{v} \subset \mathfrak{g}$ is integrable into a unipotent smooth connected subgroup $V \subset G$. 

As explained in introduction, the notion of infinitesimal saturation naturally extends to separably good characteristics with that of $\phi$-infinitesimal saturation, where $\phi : \mathcal{N}_{\red}(\mathfrak{g}) \rightarrow \mathcal{V}_{\red}(G)$ is a Springer isomorphism for $G$. This allows us to obtain a variation of a result of P. Deligne (see \cite[Theorem 1.1 and Proposition 4.12]{JEA1}). However the difficulty raised in this paragraph is only partially settled as it is known that some restricted $p$-nil $p$-Lie subalgebra of $\mathfrak{g}$ cannot be integrated when $p<\h(G)$. This prevents us from using the aforementioned structural argument useful in the proof of Lemma \ref{sat_inf_normalisateur} and justifies the additional assumption in the statement of Theorem \ref{morozov_p_inf} (with respect to the statement in characteristic $p>\h(G)$). 

In Section \ref{section_mor_p_inf} we show that Statement \ref{analogue_morozov} holds true in separably good characteristics under an extra assumption on $\phi$-infinitesimal saturation of some normalisers: 
\begin{theorem}[Analogue of Morozov's Theorem when $p$ is separably good]
Let $k$ be an algebraically closed field of characteristic $p>0$ and $G$ be a reductive $k$-group. Assume that $p$ is separably good for $G$. Let $\phi: \mathcal{N}_{\red}(\mathfrak{g}) \rightarrow \mathcal{V}_{\red}(G)$ be a Springer isomorphism for $G$. If $\mathfrak{u} \subseteq \mathfrak{g}$ is the $p$-radical of $N_{\mathfrak{g}}(\mathfrak{u})$ and if $N_{G}(\mathfrak{u})$ is $\phi$-infinitesimally saturated then:
	\begin{enumerate}
		\item the normaliser $N_\mathfrak{g}(\mathfrak{u})$ is a parabolic subalgebra of $\mathfrak{g}$,
		\item this parabolic subalgebra satisfies the equalities $N_\mathfrak{g}(\mathfrak{u})= \mathfrak{p_g}(J_{\mathfrak{u}}) = \mathfrak{p_g}(\lambda_{\mathfrak{u}})$ 
where $J_{\mathfrak{u}} \subset G$ is a unipotent smooth connected subgroup such that $\Lie(J_{\mathfrak{u}}) = \mathfrak{u}$ (such a subgroup exists according to \cite[Lemma 5.1]{JEA1}).			
	\end{enumerate}
\label{morozov_p_inf}
\end{theorem}

The conditions required in this theorem are by far less restrictive than the one required in Theorem \ref{Morozov_p_sup}. Furthermore, let us stress out that $N_{G}(\mathfrak{u})$ must be $\phi$-infinitesimally saturated for Theorem \ref{morozov_p_inf} to hold to be true as any parabolic subgroup is $\phi$-infinitesimally saturated according to \cite[Lemma 4.2]{JEA1}. 

As in the characteristic $p>\h(G)$ case, Theorem \ref{morozov_p_inf} admits the following reformulation by means of the $p$-nilpotent elements of the radical of the normaliser:
\begin{corollary}
Let $k$ be an algebraically closed field of characteristic $p>0$ and $G$ be a reductive $k$-group. Assume that $p$ is separably good for $G$. Let $\mathfrak{u} \subseteq \mathfrak{g}$ be a restricted $p$-Lie subalgebra such that $\mathfrak{u}$ is the set of $p$-elements of $\rad(N_{\mathfrak{g}}(\mathfrak{u}))$. Let us also assume that $N_{G}(\mathfrak{u})\subseteq G$ is $\phi$-infinitesimally saturated. Then $N_\mathfrak{g}(\mathfrak{u})$ is the Lie subalgebra obtained in Theorem \ref{morozov_p_inf}. 
\label{cor_p_nilp_elemnt_p_inf}
\end{corollary}
This is shown in Section \ref{section_cor_p_inf}. Furthermore, Theorem \ref{morozov_p_inf} is a particular case of the following limit corollary, for which the tower of normalisers stabilises immediately:
	
\begin{corollary}
Under the assumptions of Theorem \ref{morozov_p_inf}, let $\mathfrak{u}$ be a restricted $p$-nil $p$-subalgebra of $\mathfrak{g}$. Then:
\begin{enumerate}
	\item the tower of normalisers of $\mathfrak{u}$ converges to a parabolic subalgebra of $\mathfrak{g}$, 
	\item this limit object satisfies the following equalities $N_{\mathfrak{g}}(\mathfrak{u}_{\infty}) = \mathfrak{p_g(\lambda_{u_{\infty}})} = \Lie(N_G(J_{\mathfrak{u}_{\infty}}))$ where the subgroup $N_G(J_{\mathfrak{u}_{\infty}})$ is the limit object of the tower of smooth normalisers associated to $\Rad_U(N_G(\mathfrak{u})^0_{\red})$.
\end{enumerate}
\label{cor_parab_associé_inf}
\end{corollary}

Finally, in Section \ref{section_cor_Mor_pgrand} we show that Statement \ref{cor_morozov} holds true in separably good characteristics:

\begin{corollary}
Let $k$ be an algebraically closed field of very good characteristic. Let $G$ be a reductive $k$-group and assume that $p$ is separably good for $G$. Let $\mathfrak{q} \subsetneq \mathfrak{g}$ be a maximal proper restricted $p$-nil subalgebra. Then $\mathfrak{q}$ is either $p$-reductive or parabolic.
\end{corollary}

Finally, section \ref{section_cor_Mor_pinf} is dedicated to prove the following corollary of Theorem \ref{morozov_p_inf}:

\begin{corollary}[of Theorem \ref{morozov_p_inf}]
Let $k$ be an algebraically closed field of characteristic $p>0$ and let $G$ be a reductive $k$-group. Assume that $p>3$ is very good for $G$. Let $\mathfrak{q} \subseteq \mathfrak{g}$ be a maximal proper restricted $p$-Lie subalgebra. Then $\mathfrak{q}$ is either $p$-reductive (see Definition \ref{def_pred}) or parabolic.   
\label{cor_Mor_pinf}
\end{corollary}

\subsection{Proof of Theorem \ref{morozov_p_inf}}
\label{section_mor_p_inf}
Let us remind the reader of the notations of Theorem \ref{morozov_p_inf}: in what follows $k$ is an algebraically closed field of characteristic $p>0$ and $G$ is a reductive $k$-group. The characteristic $p$ is assumed to be separably good for $G$. Let $\mathfrak{u}\subseteq \mathfrak{g}$ be a restricted $p$-subalgebra such that $\mathfrak{u}$ is the set of $p$-nilpotent elements of $\rad(N_{\mathfrak{g}}(\mathfrak{u}))$.

\begin{proof}[Proof of Theorem \ref{morozov_p_inf}]
The proof is the same as the one of the analogue of Morozov's Theorem in characteristic $p>\h(G)$ (see Theorem \ref{Morozov_p_sup}). According to \cite[Lemma 5.1]{JEA1} the restricted $p$-nil $p$-Lie algebra $\mathfrak{u}$ integrates into a unipotent smooth connected subgroup $J_{\mathfrak{u}}\subset G$, and we aim to show that $N_G(\mathfrak{u}) = P_G(J_{\mathfrak{u}})$. Let us note that this directly implies the smoothness of $N_G(\mathfrak{u})^0$ because a parabolic subgroup is smooth (see \cite[XXVI, Définition 1.1]{SGA33}). According to Section \ref{Kempf-Rousseau}, and in particular the equivalent conditions $2.$ and $3.$ of Theorem \ref{obstruction}, what precedes is equivalent to require the equality $N_{\mathfrak{g}}(\mathfrak{u})= \mathfrak{p_g}(\lambda_{\mathfrak{u}})$ to hold true.

As by assumption the subgroup $N_G(\mathfrak{u})$ is $\phi$-infinitesimally saturated and satisfies hypotheses of \cite[Theorem 1.1]{JEA1}, the connected component of its reduced part $N_G(\mathfrak{u})^0_{\red}$ is normal in $N_G(\mathfrak{u})$. In particular if $N_G(\mathfrak{u})^0_{\red}$ is a parabolic subgroup of $G$ then one has $N_G(\mathfrak{u}) = N_G(\mathfrak{u})^0_{\red}$ because a parabolic subgroup is its own normaliser (see \cite[XXII, Corollaire 5.8.5]{SGA33}). This is enough to show that $N_G(\mathfrak{u})^0_{\red}$ is a parabolic subgroup of $G$. 

According to \cite[Lemma 3.5]{JEA1} the normalisers $N_G(\mathfrak{u})$ and $N_G(J_{\mathfrak{u}})$ are equal, so are the connected components of their reduced parts. Hence it only remains to show that $N_G(J_{\mathfrak{u}})^0_{\red}$ is a reduced parabolic subgroup of $G$. Mimicking the proof of Theorem \ref{Morozov_p_sup} we aim to make use of Veisfeiler--Borel--Tits Theorem (\cite[Corollaire 3.2]{TB}) to conclude that $N_G(J_{\mathfrak{u}})^0_{\red}= P_G(J_{\mathfrak{u}})$. Let $V:= \Rad_U(N_G(J_{\mathfrak{u}})^0_{\red})$, this requires to show that $N_G(J_{\mathfrak{u}})^0_{\red} = N_G(V)^0_{\red}$.

The subgroup $J_{\mathfrak{u}}$ is a unipotent smooth connected normal subgroup of $N_G(J_{\mathfrak{u}})^0_{\red}$ (see the preamble of \cite[Section 3.2]{JEA1}) contained in the unipotent radical of $N_G(J_{\mathfrak{u}})^0_{\red}$, whence the inclusion of Lie algebras $\mathfrak{j_u} \subseteq \mathfrak{v}$.

Finally, let us note that the unipotent subgroup $V$ is normal in $N_G(\mathfrak{u})$ because by assumption $N_G(\mathfrak{u})= N_G(J_{\mathfrak{u}})$ is $\phi$-infinitesimally saturated (see \cite[Theorem 1.1]{JEA1}). The Lie algebra of $V$ is therefore a $p$-nil $p$-ideal of $N_{\mathfrak{g}}(\mathfrak{u})$ (see \cite[Lemma 2.12]{JEA1}). The situation is hence the following $\mathfrak{v} \subseteq \rad_p(N_{\mathfrak{g}}(\mathfrak{u})) \subseteq \mathfrak{u} =\Lie(\mathfrak{j_u})$ where we make use of \cite[Lemma 2.5]{JEA1} because $\mathfrak{u}$ is the set of $p$-nilpotent elements of $\rad(N_{\mathfrak{g}}(\mathfrak{u}))$. Let us note that the last equality is ensured by \cite[Lemma 5.1]{JEA1} which ensures that the subgroup $J_{\mathfrak{u}}$ integrates $\mathfrak{u}$. As a conclusion we have shown that $\mathfrak{v} = \mathfrak{j_u}$.
The subgroups $V$ and $J_{\mathfrak{u}}$ are smooth, connected and their Lie algebras are equal, so they are according to \cite[II, \S5, 5.5]{DG}. Thus the subgroup $V$ is the unipotent radical of its smooth and connected normaliser. The latter is therefore the parabolic subgroup $P_G(J_{\mathfrak{u}}) \subseteq G$ obtained by applying Veisfeiler--Borel--Tits Theorem \cite[Corollaire 3.2]{TB}).
\end{proof}

\begin{remarks}
The proof of Theorem \ref{morozov_p_inf} leads to the two following remarks:
\begin{enumerate}
	\item the assumption of $\phi$-infinitesimal saturation a priori ensures that the smooth connected part of $N_G(\mathfrak{u})$ contains a torus. This is actually a necessary condition for the statement to hold true. Indeed, let $H \subseteq N_G(\mathfrak{u})$ be a maximal connected subgroup of multiplicative type and assume that $N_G(\mathfrak{u})^0_{\red}$ does not contain any torus. Then, the intersection $H \cap N_G(\mathfrak{u})^0_{\red}$ is non trivial and $N_G(\mathfrak{u})^0_{\red}$ is unipotent. The last point can for instance be deduced from the maximality of $H$ (that ensures that $N_G(\mathfrak{u})^0_{\red}$ has no factors of type $\mu_p$) coupled with the assumption of $k$ (which is algebraically closed). So we have that $J_{\mathfrak{u}} = N_G(\mathfrak{u})^0_{\red}$ in the proof of Theorem \ref{morozov_p_inf}, in other words $N_G(\mathfrak{u})^0_{\red}$ is both parabolic  and  unipotent, which is absurd.

	\item Moreover, the infinitesimal version of \cite[Theorem 2.5]{BDP} obtained in \cite[Proposition 4.14]{JEA1}  already allowed us to show an infinitesimal version of Theorem \ref{morozov_p_inf}, stated below.
	\end{enumerate}

\end{remarks}

	\begin{proposition}
	Let $k$ be an algebraically closed field and $G$ be a reductive $k$-group. Assume that $p$ is separably good for $G$ and let $\phi: \mathcal{N}_{\red}(\mathfrak{g}) \rightarrow \mathcal{V}_{\red}(G)$ be a Springer isomorphism for $G$. Let $\mathfrak{u} \subset \mathfrak{g}$ be a restricted $p$-subalgebra and assume that:
	\begin{enumerate}
	\item the subalgebra $\mathfrak{u}$ is the set of all $p$-nilpotent elements of $N_{\mathfrak{g}}(\mathfrak{u})$;
	\item the normaliser $N_{G}(\mathfrak{u})$ is $\phi$-infinitesimally saturated.
\end{enumerate}	
Then the normaliser $N_{\mathfrak{g}}(\mathfrak{u})\subseteq \mathfrak{g}$ is a parabolic Lie subalgebra.
\label{morozov_infinitesimal}
	\end{proposition}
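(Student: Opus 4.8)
The plan is to follow the architecture of the proof of Theorem \ref{morozov_p_inf}, but to replace the group-theoretic input by its infinitesimal counterpart \cite[Proposition 4.14]{Jea1}, so that the whole argument can be run at the level of $p$-radicals. Throughout I write $N_0 := N_G(\mathfrak{u})^0_{\red}$ for the smooth connected part of the normaliser, $V := \Rad_U(N_0)$ for its unipotent radical, and $\mathfrak{v} := \Lie(V)$; recall that $\Lie(N_G(\mathfrak{u})) = N_{\mathfrak{g}}(\mathfrak{u})$ by Lemma \ref{reminder_normaliser}. The assumption in (1) is read as ``$\mathfrak{u}$ is the set of all $p$-nilpotent elements of $\rad(N_{\mathfrak{g}}(\mathfrak{u}))$''.

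First I would identify $\mathfrak{u}$ with the $p$-radical of $N_{\mathfrak{g}}(\mathfrak{u})$. By the very definition of the normaliser of a subalgebra one has $[N_{\mathfrak{g}}(\mathfrak{u}),\mathfrak{u}]\subseteq\mathfrak{u}$, so $\mathfrak{u}$ is an ideal of $N_{\mathfrak{g}}(\mathfrak{u})$; being moreover a restricted $p$-subalgebra consisting of $p$-nilpotent elements, it is a $p$-nil $p$-ideal, whence $\mathfrak{u}\subseteq\rad_p(N_{\mathfrak{g}}(\mathfrak{u}))$. Conversely $\rad_p(N_{\mathfrak{g}}(\mathfrak{u}))$ is contained in the set of $p$-nilpotent elements of $\rad(N_{\mathfrak{g}}(\mathfrak{u}))$ (see \cite[Lemma 2.5]{Jea1}), which is $\mathfrak{u}$ by hypothesis. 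Hence $\mathfrak{u}=\rad_p(N_{\mathfrak{g}}(\mathfrak{u}))$. This step is purely formal and uses neither the Springer isomorphism nor the saturation hypothesis.

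The heart of the argument is to pass from the $p$-radical to the geometry of $N_0$, and this is where $\phi$-infinitesimal saturation enters: applying the infinitesimal version of the Deligne--Balaji--Parameswaran theorem \cite[Proposition 4.14]{Jea1} to the $\phi$-infinitesimally saturated subgroup $N_G(\mathfrak{u})$ yields $\rad_p(N_{\mathfrak{g}}(\mathfrak{u})) = \radu(N_0) = \mathfrak{v}$. Without saturation one would only control $\radu(N_0)$ as a $p$-ideal of $\Lie(N_0)$ via \cite[Lemma 2.12]{Jea1}, and the possible non-smoothness of $N_G(\mathfrak{u})$ would leave the gap between $\Lie(N_0)$ and $N_{\mathfrak{g}}(\mathfrak{u})$ uncontrolled. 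Combining with the previous paragraph gives $\mathfrak{u}=\mathfrak{v}=\Lie(V)$, so that $V$ is a smooth connected unipotent subgroup integrating $\mathfrak{u}$.

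Finally I would show that $V$ is the unipotent radical of the connected reduced part of its own normaliser and invoke Veisfeiler--Borel--Tits. Since $\Lie(V)=\mathfrak{u}$ and $V$ is smooth, Lemma \ref{reminder_normaliser} gives $N_G(V)\subseteq N_G(\mathfrak{u})$, whence $N_G(V)^0_{\red}\subseteq N_0$; conversely $V\trianglelefteq N_0$ forces $N_0\subseteq N_G(V)^0_{\red}$, so $N_G(V)^0_{\red}=N_0$ and $V=\Rad_U(N_G(V)^0_{\red})$. By \cite[Corollaire 3.2]{TB} the group $N_0$ is therefore parabolic. To conclude that the \emph{whole} normaliser is parabolic, note that $k$ is algebraically closed, hence perfect, so $N_0=N_G(\mathfrak{u})^0_{\red}$ is normal in $N_G(\mathfrak{u})$ and $V$, being characteristic in $N_0$, is normal in $N_G(\mathfrak{u})$ as well; thus $N_G(\mathfrak{u})\subseteq N_G(V)=N_0$ because parabolic subgroups are self-normalising (\cite[XXII, Corollaire 5.8.5]{SGA33}). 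Therefore $N_G(\mathfrak{u})=N_0$ is parabolic and $N_{\mathfrak{g}}(\mathfrak{u})=\Lie(N_0)$ is a parabolic subalgebra. The main obstacle is the middle step: controlling the $p$-radical of the possibly non-smooth normaliser $N_G(\mathfrak{u})$, which is exactly what the infinitesimal saturation hypothesis---through \cite[Proposition 4.14]{Jea1}---is designed to overcome.
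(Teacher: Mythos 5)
Your overall architecture (identify $\mathfrak{u}$ with $\rad_p(N_{\mathfrak{g}}(\mathfrak{u}))$, identify that $p$-radical with $\Lie(\Rad_U(N_G(\mathfrak{u})^0_{\red}))$, then apply Veisfeiler--Borel--Tits) is the right one, and your first and last steps are essentially sound. The gap is in the middle step, which is precisely where all the work of the paper's proof lives. You assert that \cite[Proposition 4.14]{Jea1} yields the equality $\rad_p(N_{\mathfrak{g}}(\mathfrak{u}))=\radu(N_0)=\mathfrak{v}$, where $N_0:=N_G(\mathfrak{u})^0_{\red}$. It does not: as that proposition is used in this paper, it only gives that $\mathfrak{v}$ is a $p$-nil $p$-ideal of $N_{\mathfrak{g}}(\mathfrak{u})$, i.e.\ the inclusion $\mathfrak{v}\subseteq\rad_p(N_{\mathfrak{g}}(\mathfrak{u}))=\mathfrak{u}$. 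The reverse inclusion $\mathfrak{u}\subseteq\mathfrak{v}$ is exactly the point at which the Springer isomorphism must intervene, and your proof never uses $\phi$ at all --- a warning sign, since without some integration device there is no reason for the smooth group $V$ to be large enough to see all of $\mathfrak{u}$. The paper obtains $\mathfrak{u}\subseteq\mathfrak{v}$ by first integrating $\mathfrak{u}$ into a unipotent smooth connected subgroup $J_{\mathfrak{u}}$ via \cite[Lemma 5.1]{Jea1} (this is where $\phi$ and the hypothesis on $\mathfrak{u}$ are used), identifying $N_G(\mathfrak{u})=N_G(J_{\mathfrak{u}})$ by \cite[Lemma 3.5]{Jea1}, and then observing that $J_{\mathfrak{u}}$, being unipotent smooth connected and normal in $N_0$, is contained in $V=\Rad_U(N_0)$. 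Alternatively one could invoke Corollary \ref{phi_sat_inf_et_rad_p}, but that corollary rests on the group-level structure theorem \cite[Theorem 1.1]{Jea1} rather than on the infinitesimal statement \cite[Proposition 4.14]{Jea1}, which defeats the stated purpose of this proposition (to get by with the infinitesimal version only).

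A second, smaller error: you justify the normality of $N_0$ in $N_G(\mathfrak{u})$ by the perfectness of $k$. Perfectness only makes $N_G(\mathfrak{u})_{\red}$ a subgroup scheme; it does not make the reduced connected component normal in a possibly non-smooth group scheme, since conjugation by points with values in non-reduced algebras need not preserve the reduced part. That normality is again a consequence of $\phi$-infinitesimal saturation via \cite[Theorem 1.1]{Jea1}. Note also that the paper's own proof of this proposition stops at showing that the smooth connected normaliser of $V=J_{\mathfrak{u}}$ is parabolic and does not attempt the identification $N_G(\mathfrak{u})=N_0$, which is carried out only in Theorem \ref{morozov_p_inf} and, under the extra type $(RA)$ hypothesis, in the remark that follows.
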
 
	
	\begin{proof}
	The restricted $p$-nil $p$-subalgebra $\mathfrak{u} \subset \mathfrak{g}$ of the statement integrates into a unipotent smooth connected subgroup $J_{\mathfrak{u}}\subset G$. According to \cite[Lemma 3.5]{JEA1} the normalisers $N_G(\mathfrak{u})$ and $N_G(J_{\mathfrak{u}})$ are equal. The subgroup $J_{\mathfrak{u}}$ is unipotent smooth connected and normal in $N_G(\mathfrak{u})$, thus in $N_G(\mathfrak{u})^0_{\red}$, hence the inclusion $J_{\mathfrak{u}}\subseteq V:=\Rad_U(N_G(\mathfrak{u}))^{0}_{\red}$. One needs to get the reverse inclusion. Once again, and as the groups involved here are smooth and connected, this can be checked on their Lie algebras (see \cite[II, \S 5, n\degree 5.5]{DG}). According to \cite[Lemma 2.12]{JEA1} the Lie algebra $\mathfrak{v}$ is a restricted $p$-nil $p$-subalgebra of $N_{\mathfrak{g}}(\mathfrak{u})$. As $N_G(\mathfrak{u})^0$ is $\phi$-infinitesimally saturated, the Lie algebra $\mathfrak{v}$ is an ideal of $\Lie(N_{G}(\mathfrak{u}))= N_{\mathfrak{g}}(\mathfrak{u})$ (see the second point of \cite[Proposition 4.14]{JEA1}). In other words $\mathfrak{v}$ is a $p$-nil $p$-ideal of $N_{\mathfrak{g}}(\mathfrak{u})$ thus is contained in $\mathfrak{u}$ (which is nothing but the set of all $p$-nilpotent elements of the radical of $N_{\mathfrak{g}}(\mathfrak{u})$ according to \cite[Lemma 2.5]{JEA1}). Hence $J_{\mathfrak{u}}= V$ is the unipotent radical of its smooth connected normaliser. According to \cite[Corollaire 3.2]{TB} this normaliser is the parabolic subgroup $P_G(J_{\mathfrak{u}})\subseteq G$. In particular this means that $\mathfrak{u}$ is the Lie algebra of the unipotent radical of a parabolic subgroup. It is therefore the Lie algebra of the unipotent radical of the optimal parabolic subgroup associated to it (and obtained in section \ref{Kempf-Rousseau} via the Hilbert--Mumford--Kempf--Rousseau method). In other words we have shown that $\mathfrak{p_g(\lambda_u)} = \mathfrak{p_g(j_u)}$, whence the infinitesimal version of Theorem \ref{morozov_p_inf}. 
\end{proof}

\begin{remark}
	When $G$ is of type $(RA)$, \cite[Proposition 4.14]{JEA1} is even enough to show the ``strong'' version of Morozov's Theorem \ref{morozov_p_inf}. We detail the reasoning here. Let us also emphasize that, in practice, this assumption together with the additional assumption of separably good characteristic amounts to avoid factors of type $A_{pm-1}$ in the decomposition of $G$.
	
In order to simplify the notations we set $P:=P_G(J_{\mathfrak{u}})$ so $\mathfrak{p}:=\mathfrak{p_g(j_u)} \subseteq \mathfrak{g}$. Assume $G$ to be of type $(RA)$. Then any of its parabolic subgroup $Q\subseteq G$ is such that $Q = N_G(\mathfrak{q})^0$ (see \cite[XXII, Proposition 5.3.4]{SGA33}). In other words the situation is the following: 
	\[\mathfrak{p} \unlhd \N_{\mathfrak{g}}(\mathfrak{u}) \subseteq N_{\mathfrak{g}}(\mathfrak{p}) = \Lie(N_G(\mathfrak{p})^0), \]
by virtue of the first point of \cite[Proposition 4.14]{JEA1}. This leads to the following inequalities: 
\[\dim(\mathfrak{p}) = \dim(P) \leq \dim(N_{G}(\mathfrak{u}))\leq \dim(N_{\mathfrak{g}}(\mathfrak{u}) \leq \dim(N_G(\mathfrak{p})^0)= \dim(P)\]
and allows us to conclude that $P_G(J_{\mathfrak{u}}) = N_{G}(\mathfrak{u})^0_{\red} = N_G(\mathfrak{u})$. The equality $N_G(\mathfrak{u}) = P_G(\lambda_{\mathfrak{u}})$ follows, as explained in the above proof.
	
\end{remark}

\subsection{Proofs of Corollaries \ref{cor_p_nilp_elemnt_p_inf} and \ref{cor_parab_associé_inf}}
\label{section_cor_p_inf}
The preceding paragraphs aimed to adapt techniques and notions developed in Section \ref{chapitre_morozov_p_sup} when $p>\h(G)$ to separably good characteristics. This work being done, proofs of Corollaries \ref{cor_p_nilp_elemnt_p_inf} and \ref{cor_parab_associé_inf} will be the same as those of Corollaries \ref{cor_p_elements_p_sup} and \ref{cor_towers_p_sup}. In particular Corollary \ref{cor_p_nilp_elemnt_p_inf} follows from Theorem \ref{morozov_p_inf} thanks to the following result:

\begin{corollary}[of {\cite[Lemma 2.13]{JEA1}}]
Let $k$ be an algebraically closed field of characteristic $p\geq 3$ and let $G$ be a reductive $k$-group. Assume that $p$ is separably good for $G$ and let $\phi : \mathcal{N}_{\red}(\mathfrak{g}) \rightarrow \mathcal{V}_{\red}(G)$ be a Springer isomorphism for $G$. Let $H \subseteq G$ be a $\phi$-infinitesimally saturated group. Then the following equalities hold true:
\begin{alignat*}{2}
\rad_p(\mathfrak{h}) \: & = \{x \in \rad(\mathfrak{h}) \mid  x \text{ is } p\text{-nilpotent}\} \\
\: & =  \{x \in \rad(\Lie(H_{\red})) \mid  x \text{ is } p\text{-nilpotent}\} = \rad_p(\Lie(H_{\red})) =\radu(H^{0}_{\red}).
\end{alignat*}
\label{phi_sat_inf_et_rad_p}
\end{corollary}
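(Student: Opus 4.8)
The plan is to deduce this exactly as Corollary \ref{sat_inf_et_rad_p} was deduced in the $p>\h(G)$ case, feeding the hypotheses into Lemma \ref{p_rad_inf_sat}; the only structural change is that the multiplicative-type quotient is now produced by the $\phi$-infinitesimal analogue of P.~Deligne's theorem instead of by \cite[Theorem 2.5]{BDP}.

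First I would set $H':=H^{0}_{\red}$ and $S:=H/H^{0}_{\red}$, and exhibit $H$ as an extension to which Lemma \ref{p_rad_inf_sat} applies. Since $H$ is $\phi$-infinitesimally saturated, \cite[Theorem 1.1]{Jea1} (see also \cite[Proposition 4.14]{Jea1}) shows that $H^{0}_{\red}$ is normal in $H$ and that $S$ is of multiplicative type, so that $1\to H^{0}_{\red}\to H\to S\to 1$ is an extension of a group of multiplicative type by a smooth connected group. The subgroup $\Rad_U(H^{0}_{\red})$ is characteristic in $H^{0}_{\red}$, hence stable under every $H$-conjugation and thus normal in $H$; this supplies the normality of $\iota(\Rad_U(H'))$ required by Lemma \ref{p_rad_inf_sat}. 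The characteristic hypothesis of that lemma also holds: $p$ separably good is in particular good, and Table \ref{Hypothèses_sur_la caractéristiqu_ pour_un_groupe_simple} shows that $p=2$ is good only when every irreducible component of $\Phi(G)$ is of type $A_n$, while $k$ is algebraically closed by assumption.

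With both hypotheses in place, Lemma \ref{p_rad_inf_sat} applied to $H$ gives $\rad_p(\mathfrak{h})=\{x\in\rad(\mathfrak{h})\mid x\text{ is }p\text{-nilpotent}\}=\Lie(\iota)(\rad_p(\Lie(H^{0}_{\red})))$. Using $\Lie(H^{0}_{\red})=\Lie(H_{\red})$ and that $\Lie(\iota)$ is the canonical inclusion, this identifies $\rad_p(\mathfrak{h})$ with $\rad_p(\Lie(H_{\red}))$; since every element of a $p$-radical is $p$-nilpotent, the two middle equalities of the statement follow immediately. The final equality $\rad_p(\Lie(H_{\red}))=\radu(H^{0}_{\red})$ is then obtained from Lemma \ref{reminder_rad} (2), i.e.\ \cite[Lemma 2.14]{Jea1}, applied to the smooth connected group $H^{0}_{\red}$. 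The step I expect to be the main obstacle is precisely this last invocation: it requires $p$ to be separably good for the reductive quotient $H^{0}_{\red}/\Rad_U(H^{0}_{\red})$, whereas we only assumed $p$ separably good for $G$. One must therefore check that separably good descends to this subquotient (good characteristics pass to subsystems, but the separability and type-$A$ divisibility conditions need care); this descent is exactly what \cite[Lemma 2.14]{Jea1} guarantees, after which the chain of equalities closes.
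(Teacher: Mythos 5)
Your proposal is correct and follows essentially the same route as the paper: verify the two hypotheses of Lemma \ref{p_rad_inf_sat} (normality of $\Rad_U(H^0_{\red})$ in $H$ and the multiplicative-type quotient via \cite[Theorem 1.1]{Jea1}, plus the characteristic condition read off from Table \ref{Hypothèses_sur_la caractéristiqu_ pour_un_groupe_simple}), then apply that lemma together with $\Lie(H^0_{\red})=\Lie(H_{\red})$. Your extra remarks — that $\Rad_U(H^0_{\red})$ is characteristic hence normal, and that the final identification $\rad_p(\Lie(H_{\red}))=\radu(H^0_{\red})$ rests on \cite[Lemma 2.13/2.14]{Jea1} applied to the smooth connected group — only make explicit what the paper leaves implicit inside the proof of Lemma \ref{p_rad_inf_sat}.
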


\begin{proof}
Let us note that assumptions of Lemma \ref{p_rad_inf_sat} are satisfied here. Indeed:
\begin{enumerate}
\item the unipotent radical of $H^0_{\red}$ is normal in $H$ and the quotient $H/H^0_{\red}$ is a group of multiplicative type because $H$ is infinitesimally saturated (see to \cite[Theorem 1.1]{JEA1}), 
\item the characteristic is always as requested in the statement of Lemma \ref{p_rad_inf_sat} as $p$ is assumed to be separably good for $G$. This can be checked with Table \ref{Hypothèses_sur_la caractéristiqu_ pour_un_groupe_simple}.
\end{enumerate}
By making use of the identity $\Lie(H^0_{\red}) = \Lie(H_{\red})$, we obtain the desired equalities.
\end{proof}

\begin{proof}[Proof of Corollary \ref{cor_p_nilp_elemnt_p_inf}]
Note that the main point here was to obtain Corollary \ref{phi_sat_inf_et_rad_p} above. This being done the proof of Corollary \ref{cor_p_nilp_elemnt_p_inf} is now exactly the same as the proof of Corollary \ref{cor_p_elements_p_sup} (see Section \ref{section_cor_p_grand})
\end{proof}

The proof of Corollary \ref{cor_parab_associé_inf} is now immediate:

\begin{proof}[Proof of Corollary \ref{cor_parab_associé_inf}]
Let us consider the towers introduced in the statement and denote by $\mathfrak{u}_{\infty}$, respectively $\mathfrak{q}_{\infty}$, their limit objects. As by assumption the normaliser $N_{\mathfrak{g}}(\mathfrak{u}_{\infty})$ is $\phi$-infinitesimally saturated, the proof of Theorem \ref{morozov_p_inf} allows us to identify the Lie algebra $\mathfrak{u}_{\infty}$ with the $p$-radical of its normaliser $N_{\mathfrak{g}}(\mathfrak{u}_{\infty})$. According to Corollary \ref{morozov_p_inf} this normaliser is a parabolic Lie subalgebra of $\mathfrak{g}$. The same statement ensures that:
\begin{itemize}
 \item it derives from a subgroup $P(U_{\infty})$ obtained by applying Veisfeiler--Borel--Tits Theorem (see \cite[Corollaire 3.2]{TB}) to the group $U_{\infty}$ that integrates $\mathfrak{u}_{\infty}$,
 \item it is also the Lie algebra of the optimal parabolic subgroup of $\mathfrak{u}$, denoted by $P_G(\lambda_{\mathfrak{u_{\infty}}})$.
 \end{itemize}
\end{proof} 

\subsection{Proof of Corollary \ref{cor_Mor_pinf}}
\label{section_cor_Mor_pinf}

\begin{proof}[Proof of Corollary \ref{cor_Mor_pinf}]
Let $\phi: \mathcal{N}_{\red}(\mathfrak{g}) \rightarrow \mathcal{V}_{\red}(G)$ be a Springer isomorphism for $G$.
By maximality of $\mathfrak{q}$ only two possibilities can occur:
\begin{enumerate}
	\item either $N_{\mathfrak{g}}(\mathfrak{q}) = \mathfrak{g}$ so $\mathfrak{q} \vartriangleleft \mathfrak{g}$. In this case, we claim that $\mathfrak{q}$ is $p$-reductive. Indeed, by making use of the following exact sequence: 
	\begin{figure}[H]
\begin{center}
\[\begin{tikzpicture} 

 \matrix (m) [matrix of math nodes,row sep=2em,column sep=4.8em,minimum width=2em,  text height = 1.5ex, text depth = 0.25ex]
  {
    1 & (Z_G^0)_{\red} & G & G^{\Ss} & 1,\\
  };
  \path[-stealth]
  	(m-1-1) edge (m-1-2) 
  	(m-1-2) edge (m-1-3) 
    (m-1-3) edge (m-1-4) 
    (m-1-4) edge (m-1-5)
    ;  	
\end{tikzpicture}\]
\end{center}
\end{figure} 
where $(Z_G^0)_{\red}$ is the reduced part of the connected center of $G$, and $G^{\Ss}$ is the resulting semisimple group, we can reduce ourselves to the case $G = G^{\Ss}$ (as $\Lie((Z_G^0)_{\red})$ will not provide any $p$-nilpotent element). The group $G^{\Ss}$ then decomposes into an almost direct product of $n$ almost semisimple groups $G_i$, whose Lie algebras are denoted by $\mathfrak{g_i}$. According to \cite[Theorem 3.2]{Ho} there exist ideals $\mathfrak{q}_i \unlhd \mathfrak{g}^{\Ss}$ such that $ \mathfrak{q}_i = \mathfrak{q} \cap \mathfrak{g}_i$. Our assumptions on the characteristic imply that then $\mathfrak{q}_i$ is equal to $\mathfrak{g_i} $ for all $i$ except one, for which it is $0$ (see \cite[Corollary 2.7 a)]{Ho}, note that here it is crucial to assume $p>2$). We label the $\mathfrak{g}_i$'s in such a way that $\mathfrak{q}_1 = 0$. Then, according to \cite[Table 1]{Ho} together with \cite[Theorem 3.2]{Ho} one has $\sum_{i=2}^n \mathfrak{g}_i \subset \mathfrak{q} \subset \sum_{i=2}^n \mathfrak{g}_i + \mathfrak{z}_{\mathfrak{g}_1}$, hence $\mathfrak{q} = \sum_{i=2}^n \mathfrak{g}_i$ as $\mathfrak{z}_{\mathfrak{g}_1} = 0$ by virtue of \cite[XII Théorème 4.7 d) and Proposition 4.11]{SGA32} (see also \cite[Remarque C.1.4]{JEAthese}). So in this case the Lie algebra $\mathfrak{q}$ is $p$-reductive. 

	\item Otherwise, one has $N_{\mathfrak{g}}(\mathfrak{q}) = \mathfrak{q}$. Denote by $\mathfrak{u}$ the $p$-radical of $\mathfrak{q}$. There are several possibilities:
\begin{enumerate}
\item either $\mathfrak{u}:=\rad_p(\mathfrak{q})$ is trivial, then by definition $\mathfrak{q}$ is a $p$-reductive subalgebra of $\mathfrak{g}$;
\item or $\mathfrak{u}$ is not trivial. In this case note that necessarily $N_{\mathfrak{g}}(\mathfrak{u}) = \mathfrak{q}$ by maximality of $\mathfrak{q}$. Indeed, as $\mathfrak{u} = \rad_p(\mathfrak{q})$ one has that $\mathfrak{q} \subseteq N_{\mathfrak{g}}(\mathfrak{u})= \Lie(N_G(\mathfrak{u}))$ and this normaliser is a restricted $p$-nil $p$-subalgebra as it derives from an algebraic group. Therefore either $N_{\mathfrak{g}}(\mathfrak{u}) = \mathfrak{q}$ or $N_{\mathfrak{g}}(\mathfrak{u}) =\mathfrak{g}$. This last option cannot occur because $\mathfrak{g}$ is the Lie algebra of a reductive group, hence is $p$-reductive (thus has no $p$-nil $p$-ideal). So $N_{\mathfrak{g}}(\mathfrak{u}) = \mathfrak{q}$ and it remains to see why Theorem \ref{morozov_p_inf} (or one of its corollaries) apply.
\end{enumerate}
We start by showing that $\mathfrak{u}$ is integrable (so that one actually has $\mathfrak{j_u}:=\Lie(J_{\mathfrak{u}})=\mathfrak{u}$, where $J_{\mathfrak{u}}$ is as above). According to \cite[Lemma 3.4]{JEA1} and Lemma \ref{reminder_normaliser} (2.(i)) the following inclusions are satisfied $N_{G}(\mathfrak{u}) \subseteq N_G(J_{\mathfrak{u}}) \subseteq N_G(\mathfrak{j_u})$. At the Lie algebra level this leads to:
\[N_{\mathfrak{g}}(\mathfrak{u})=\mathfrak{q} \subseteq \Lie(N_G(J_{\mathfrak{u}})) \subseteq N_{\mathfrak{g}}(\mathfrak{j_u}).\]
By maximality of $\mathfrak{q}$, either $N_{\mathfrak{g}}(\mathfrak{j_u})= \mathfrak{q}$ or $N_{\mathfrak{g}}(\mathfrak{j_u})= \mathfrak{g}$. This second option cannot occur as this would imply that $\mathfrak{j_u}$ is a restricted $p$-nil $p$-ideal of $\mathfrak{g}$ (because it is the Lie algebra of $J_{\mathfrak{u}}$ which is unipotent according to \cite[Lemma 4.4]{JEA1}). So we have that $N_{\mathfrak{g}}(\mathfrak{j_u})= \mathfrak{q}$ and the situation is the following:
\[\mathfrak{u}:= \rad_p(\mathfrak{q}) \subseteq \mathfrak{j_u}\subseteq \rad_p(N_\mathfrak{g}(\mathfrak{j_u})) \subseteq \mathfrak{q}= N_{\mathfrak{g}}(\mathfrak{u})=N_{\mathfrak{g}}(\mathfrak{j_u})\]
where the first inclusion comes from \cite[Lemma 4.4]{JEA1}. Hence we actually have shown that $\mathfrak{u} = \mathfrak{j_u}$, so $J_{\mathfrak{u}}$ integrates $\mathfrak{u}$. 

Let $X$ be the reduced $k$-sub-scheme of $p$-nilpotent elements of $\mathfrak{q} = N_{\mathfrak{g}}(\mathfrak{u}) = W(N_{\mathfrak{g}})(\mathfrak{u})$. The $t$-power map, defined for any $p$-nilpotent $x \in \mathfrak{g}$ by the map,
\begin{alignat*}{3}
\phi_x : \: & \mathbb{G}_a \: & \rightarrow \: & G\\
\:&  t \: & \mapsto \: & \phi(tx).
\end{alignat*}
induces a morphism
\begin{alignat*}{4}
\psi_X : \: & X \: & \times \: & \mathbb{G}_a \: & \rightarrow \: & G\\
\:& (x,\:& \: & t) \: & \mapsto \: & \phi(tx).
\end{alignat*}
Let $J_X$ be the subgroup generated by the image of $\psi_X$. Note that $J_{X}$ is smooth (according to \cite[VIB, Proposition 7.1 (i)]{SGA31}) and connected (according to \cite[VIB, Corollaire 7.2.1]{SGA31}). Moreover $N_G(\mathfrak{q})\subseteq N_G(X) \subseteq N_G(J_X)$ where:
\begin{enumerate}
\item the first inclusion comes from the fact that $p$-nilpotency is preserved under $\Ad$, 
\item the second inclusion is deduced from the $\fppf$-formalism \cite[Lemma 3.4]{JEA1}. Note that this result is applicable here because its proof does not make use of the Lie algebra structure of the $p$-nil-subspace of $\mathfrak{g}$ one starts with.
\end{enumerate} 
Thus by derivation we finally get that $\mathfrak{q} \subseteq N_{\mathfrak{g}}(X) \subseteq N_{\mathfrak{g}}(\mathfrak{j}_X)$, because $\Lie(N_G(X)) = N_{\mathfrak{g}}(X)$ according to \ref{reminder_normaliser} ii). So by maximality of $\mathfrak{q}$:
\begin{enumerate}
\item either $N_{\mathfrak{g}}(\mathfrak{j}_X) =  \mathfrak{q}$, we then show that the normaliser $N:= N_{G}(\mathfrak{j}_X)$ is $\phi$-infinitesimally saturated. Indeed, let $X_{\mathfrak{n}}$ be the set of $p$-nilpotent elements of $\mathfrak{n}:= \Lie(N)$. Note that $\Lie(N) = N_{\mathfrak{g}}(\mathfrak{j}_X) =  \mathfrak{q}$ according to \ref{reminder_normaliser} iii) and by assumption. Thus $X = X_{\mathfrak{n}}$ and $J_{X_\mathfrak{n}}= J_X \subset N_G(J_X) \subset N_G(\mathfrak{j}_X)$, so $N_G(\mathfrak{j}_X)$ is $\phi$-infinitesimally saturated. Let $V := \Rad_U(N_G(\mathfrak{j}_X)^0_{\red})$ be the unipotent radical of $N_G(\mathfrak{j}_X)$. As $N_G(\mathfrak{j}_X)$ is $\phi$-infinitesimally saturated and $p\geq 3$, Lemma \ref{p_rad_inf_sat} allows us to conclude that $\mathfrak{v} \subseteq \mathfrak{u} = \rad_p(\mathfrak{q}) = \rad_p(N_{\mathfrak{g}}(\mathfrak{j}_X)) = \rad_p(\Lie(N_G(J_X))^{0}_{\red})$. The groups $J_{\mathfrak{u}}$ and $V$ being smooth and connected, and their Lie algebras being equal, they coincide. Now, note that:
\begin{enumerate}
	\item at the group level one has that $N_G(\mathfrak{j}_X) \subseteq  N_G(\mathfrak{v})$ (by \cite[Theorem 1.1]{JEA1} as $N_G(\mathfrak{j}_X)$ is $\phi$-infinitesimally saturated),
	\item at the Lie algebra level this leads to the inclusion $N_{\mathfrak{g}}(\mathfrak{j}_X) \subseteq  N_{\mathfrak{g}}(\mathfrak{v})$. Thus either $\mathfrak{q} = N_{\mathfrak{g}}(\mathfrak{j}_X) = N_{\mathfrak{g}}(\mathfrak{v})$ or $N_{\mathfrak{g}}(\mathfrak{v}) = \mathfrak{g}$, and only the first equality can occur, because $\mathfrak{q}$ is maximal and $\mathfrak{g}$ is $p$-reductive. Once again, the normaliser $N_G(\mathfrak{v})$ is $\phi$-infinitesimally saturated (this can be shown exactly in the same way as for $N_G(\mathfrak{j}_X)$). We just have shown that $\mathfrak{v}$ is the $p$-radical of its normaliser $N_{\mathfrak{g}}(\mathfrak{v})$ and that the latter is the Lie algebra of a $\phi$-infinitesimally saturated group. Corollary \ref{morozov_p_inf} then holds true and allows us to conclude that $\mathfrak{q}$ is parabolic.
\end{enumerate}

\item or $N_{\mathfrak{g}}(\mathfrak{j}_X) = \mathfrak{g}$. Then, consider the following short exact sequence:
\begin{figure}[H]
\begin{center}
\[\begin{tikzpicture} 

 \matrix (m) [matrix of math nodes,row sep=2em,column sep=4.8em,minimum width=2em,  text height = 1.5ex, text depth = 0.25ex]
  {
    1 & (Z_G^0)_{\red} & G & G^{\Ss} & 1,\\
  };
  \path[-stealth]
  	(m-1-1) edge (m-1-2) 
  	    (m-1-2) edge (m-1-3) 
    (m-1-3) edge (m-1-4) 
    (m-1-4) edge (m-1-5)
    ;  	
\end{tikzpicture}\]
\end{center}
\end{figure} 
where $(Z^0_G)_{\red}$ is the reduced part of the connected component of the center of $G$. This leads, after derivation, to the following exact sequence of Lie algebras:
\begin{figure}[H]
\begin{center}
\[\begin{tikzpicture} 

 \matrix (m) [matrix of math nodes,row sep=2em,column sep=4.8em,minimum width=2em,  text height = 1.5ex, text depth = 0.25ex]
  {
    0 & \Lie((Z_G^0)_{\red}) & \mathfrak{g} & \mathfrak{g}^{\Ss} & 0.\\
  };
  \path[-stealth]
  	(m-1-1) edge (m-1-2) 
  	  	    (m-1-2) edge (m-1-3) 
    (m-1-3) edge node[above] {$\pi$} (m-1-4) 
    (m-1-4) edge (m-1-5)
    ;  	
\end{tikzpicture}\]
\end{center}
\end{figure} 
The group $G^{\Ss}$ is an almost direct product of almost simple groups $G_i$, let $\mathfrak{g}_i$ the corresponding Lie algebra. The image of $\mathfrak{j}_X$ is an ideal of $\mathfrak{g}^{\Ss}$ because $\pi$ is surjective, thus, according to \cite[Theorem 3.2]{Ho}, there exist ideals $(\mathfrak{j}_X)_i\unlhd \mathfrak{g}_i$ and thus $N_{\mathfrak{g}}(J_X) = \mathfrak{g}$. For what follows we can hence assume that $G$ is one of the $G_i$'s. So $\mathfrak{j}_X$ is an ideal of the Lie algebra of $G$ which is assumed to be almost-simple, hence $\mathfrak{j}_X = \mathfrak{g}$, according to \cite[corollary 2.7 a)]{Ho} and because of the assumption we made on the characteristic. Therefore one also has $J_X= G$ because both groups are smooth and connected (\cite[II, \S5, 5.5]{DG}). Once again there are two possibilities and we will show that both of them are absurd:
\begin{itemize}
	\item either $N_{\mathfrak{g}}(X) = \mathfrak{q}$, in this case let $\lambda_X$ be a destabilizing one-parameter subgroup associated to $X$ via the Hilbert--Mumford--Kempf--Rousseau's method. As all Lie algebras considered here are finitely generated it is enough to destabilise a well-chosen vector with a finite number of components. By construction (see section \ref{Kempf-Rousseau} above) one has $X \subset \radu(\mathfrak{p_g}(\lambda_X))$. Also, note that the inclusion $J_X \subset P_G(\lambda_X)$ is always satisfied, because of the $\fppf$-formalism. Indeed, let $R$ be a $k$-algebra, and consider $h \in J_{X}(R)$. By definition of $J_{X}$ there exists an $\fppf$-covering $S \rightarrow R$ such that $h_S = \psi_{X}(x_1,s_1) \cdots \psi_{X}(x_n,s_n)$ for $x_i \in X_R \otimes_R S $ and $s_i \in S$. But then one has 
	\[\lim_{t \to 0}(\lambda_X(t) \cdot h)_S = \prod_{i=1}^n \lim_{t \to 0}\lambda_X(t) \cdot \psi_{X}(x_i,s_i) = \prod_{i=1}^n \psi_{X}\left(\lim_{t \to 0}\lambda_X(t) \cdot x_i,s_i\right),\] 
\noindent which exists as any limit $\lim_{t \to 0}\lambda_X(t) \cdot x_i$ exists. Thus $P_G(\lambda_X) = G$ (as $J_X = G$) and $\radu(\mathfrak{p_g}(\lambda_X))$ is trivial, so are $X$ and $\mathfrak{u}$, hence a contradiction.
	\item Otherwise, one has $N_{\mathfrak{g}}(X) = \mathfrak{g}$, in this case let $\mathfrak{h}_X$ be the restricted $p$-Lie algebra generated by $X$. Then $N_{\mathfrak{g}}(X) \subset N_{\mathfrak{g}}(\mathfrak{h}_X)$, thus $N_{\mathfrak{g}}(\mathfrak{h}_X) = \mathfrak{g}$, hence $\mathfrak{h}_X= \mathfrak{g}\supsetneq \mathfrak{q}$, which is also absurd.
\end{itemize}
\end{enumerate}  

\end{enumerate}
\end{proof}

\section{An application: characterisation of the canonical parabolic subgroup of a reductive group}
	\label{parab_canonique}
	
	\subsection{Proof of Corollary \ref{cor_killing}}
\label{section_cor_Killing}
	
	Corollary \ref{cor_killing} of Theorem \ref{Morozov_p_sup} below is crucial to derive the application presented in Section \ref{parab_canonique} below. We assume here that $\mathfrak{g}$ is endowed with a non-degenerate $G$-equivariant symmetric bilinear form, denoted by $\kappa$. Under this assumption, and according to \cite[I, \S8, Corollary]{SEL}, the Lie algebra $\mathfrak{g}$ is semisimple.

\begin{corollary}[of Corollary \ref{cor_p_elements_p_sup}]
Let $k$ be an algebraically closed field and $G$ be a reductive $k$-group. Assume moreover that $k$ is of characteristic $p>\h(G)$ and that the Killing form $\kappa$ is non-degenerate over $\mathfrak{g}$. Let $\mathfrak{p} \subseteq \mathfrak{g}$ be a Lie subalgebra such that $\mathfrak{p}^{\perp}$ is a nilpotent subalgebra of $\mathfrak{q}$ made of $p$-nilpotent elements of $\mathfrak{g}$. Then:
	\begin{enumerate}
		\item the subalgebra $\mathfrak{p}$ is a parabolic subalgebra of $\mathfrak{g}$,
		\item it is the parabolic subalgebra obtained in Theorem \ref{Morozov_p_sup}. Namely one has:
		\[\mathfrak{p} = \Lie(P_G(\exp(\mathfrak{p}^{\perp}))) = \mathfrak{p_g(\lambda_{p^{\perp}})}.\]
	\end{enumerate} 
	\label{cor_killing}
\end{corollary}

\begin{proof}
Let us first show that $\mathfrak{p} \subseteq N_{\mathfrak{g}}(\mathfrak{p}^{\perp})$: let $x \in \mathfrak{p}^{\perp}$ and $p_1 \in \mathfrak{p}$. For any $p_2 \in \mathfrak{p}$, the following vanishing condition $\kappa([x,p_1],p_2) = \kappa(x,[p_1,p_2]) = 0$ is satisfied. In other words $[x,p_1] \in \mathfrak{p}^{\perp}$ and $\mathfrak{p}$ is therefore a subalgebra of $N_{\mathfrak{g}}(\mathfrak{p}^{\perp})$. 

Now, remark that $\mathfrak{p}^{\perp}$ is contained in $\nil(\mathfrak{p})$ because $\mathfrak{p}^{\perp}$ is a nilpotent ideal of $\mathfrak{p}$. On the other hand the nilradical of $\mathfrak{p}$ is orthogonal to $\mathfrak{p}$ according to \cite[\S 4, n\degree 3, Proposition 4d]{BOU1}. Hence one eventually has $\mathfrak{p}^{\perp} = \nil(\mathfrak{p})$. As by assumption $\mathfrak{p}^{\perp}$ is made of $p$-nilpotent elements, the nilradical $\nil(\mathfrak{p})$ is a $p$-nil $p$-ideal of $\mathfrak{p}$, therefore one has $\nil(\mathfrak{p}) = \rad_p(\mathfrak{p})=:\mathfrak{u}$

Moreover, by definition $\mathfrak{p} \subseteq N_{\mathfrak{g}}(\mathfrak{u})$, so that $N_{\mathfrak{g}}(\mathfrak{u})^{\perp} \subseteq \mathfrak{p}^{\perp} = \mathfrak{u}$. Applying  \cite[\S 4, n\degree 3, Proposition 4d]{BOU1}, one also gets the inclusions $\nil(N_{\mathfrak{g}}(\mathfrak{u})) \subseteq (N_{\mathfrak{g}}(\mathfrak{u}))^{\perp}$. To summarise the situation is the following:
\[\mathfrak{u} \subseteq \rad_p(N_{\mathfrak{g}}(\mathfrak{u}) \subseteq \nil(N_{\mathfrak{g}}(\mathfrak{u})) \subseteq (N_{\mathfrak{g}}(\mathfrak{u}))^{\perp} \subseteq \mathfrak{p}^{\perp} = \mathfrak{u}.\]
Therefore $\mathfrak{u}$ is the $p$-radical of its normaliser, and so $N_G(\mathfrak{u})$ is parabolic according to Theorem \ref{Morozov_p_sup}. The second point directly follows from Theorem \ref{Morozov_p_sup}.

\end{proof}

	\subsection{Canonical parabolic subgroup}
	
In what follows we use the language of \cite{SGA33}. In particular the theory we refer to is the one of reductive groups over an arbitrary basis. Let us remind the reader of some conventions: a group scheme $G$ over a basis $S$ is reductive if it is affine and smooth over $S$, with connected and reductive fibres (see \cite[XIX, Définition 2.7]{SGA33}). A definition of Borel subgroups in this framework can be found in \cite[XXII Exemples 5.2.3.a)]{SGA33}, see also \cite[XXVI Définition 1.1]{SGA33} for a definition of parabolic subgroups. A reference for the study of the scheme of automorphisms of a group scheme is \cite[XXIV]{SGA33}. 

Let $k$ be a field and $C$ be a smooth geometrically connected projective $k$-curve. In \cite{BEH}, K. A. Behrend defines the notion of the canonical parabolic subgroup of a reductive $C$-group $G$ which is the twisted form of a constant $C$-group $G_0$ (see \cite[Definition 5.6]{BEH}). Let $E$ be the associated $G_0$-torsor over $X$ so that $G= ^{E}G_0$. The author shows that such a group has a unique canonical parabolic subgroup, denoted by $P_{G}^{\can}$: it is the maximal element (for the inclusion) of the set of parabolic subgroups of $G$ whose degree is the degree of instability of $G$ (see Theorem 7.3 and Definition 4.4 ibid.). As a reminder (see Definition 4.1 and Note 4.2 ibid.), the degree of a smooth connected $C$-group is the degree of its Lie algebra seen as a vector bundle over $C$. The degree of a reductive group is thus always equal to $0$. This defines a canonical reduction of the structure group of $E$ that generalises that of $G$ as defined by M. Atiyah and R. Bott in \cite[\S 10]{AB} when $k$ is of characteristic $0$ (as explained and showed in \cite{BH}.

Let us set $U_G^{\can}:= \Rad_U(P_G^{\can})$. As mentioned in the introduction, when $p>2 \dim(G)-2$ and under the above assumptions on $G$, Theorem \ref{Morozov_p_sup} allows to provide a new proof that the definition of M. Atiyah and R. Bott still make sense in characteristic $p>0$. In this framework, the parabolic subgroups of $G=\prescript{E}{}{G_0}$ obtained respectively by K. A. Behrend in \cite[Theorem 7.3]{BEH} (and that are always defined), and by M. Atiyah and R. Bott in \cite[\S 10]{AB} (that is defined only for $p = 0$ or $p>2 \dim(G)-2$), coincide. This is the point of this section.

Let $V$ be a vector bundle over a curve $C$. As a reminder:
\begin{itemize}
\item the slope of $V$, denoted by $\mu(V)$, is the quotient $\deg(V)/\rank(V)$ where $\deg(V)$ and $\rank(V)$ are respectively the degree and the rank of the vector bundle $V$. Let us in particular underline that $V$ is of positive (respectively negative) slope if and only if $V$ has positive (respectively negative) degree;
\item any vector bundle over a curve admits a Harder--Narasimhan filtration (see \cite[1.3.9]{HN}), that is a filtration by vector subbundles such that the successive quotient are semistable with strictly decreasing slope. 
\end{itemize}  
In what follows $\mu_{\min}(V)$ (respectively $\mu_{\max}(V)$) is the minimum (respectively the maximum) of slopes of the successive quotients that occur in this filtration.

In order to avoid heavy notations, we denote:
\begin{itemize}
\item by $P:= P_G^{\can}$ the canonical parabolic subgroup of $G$, 
\item by $I$ the type of $P$,
\item and by $P_0$ the parabolic subgroup of $G_0$ from which $P$ is a twisted form. 
\end{itemize}
The couple $(\prescript{E}{}{G_0}, \prescript{E_{P_0}}{}{P_0})$ naturally identifies with $(G,P)$, where $E$ is the $\Aut(G_0, P_0)_{k}$-torsor $\Isom((G_0,P_0),(G,P))$, and $E_{P_0}$ is its restriction to $P_0$ (see \cite[Exposé XXIV]{SGA33}, in particular Corollary 2.2). This restriction of structure is said to be canonical (see \cite[\S 8]{BEH} as well as the preamble and Lemma 4 of \cite{HEI}). The associated $P_0/\Rad_U(P_0)$-torsor is thus semi-stable (which means that the reductive group $P/\Rad_U(P)$ it defines is semistable). As a reminder a reductive $k$-group $H$ is semistable if any parabolic subgroup $Q \subseteq H$ is of negative or $0$ degree, see \cite[Definition 4.4]{BEH}).

Let us go back to our notations. The Lie algebra $\mathfrak{g}:= \prescript{E}{}{\mathfrak{g}_0}$ is a vector bundle over $C$. It thus admits a Harder--Narasimhan filtration:
 \[0 \subsetneqq E_{-r} \subsetneqq E_{-r+1} \subsetneqq \hdots \subsetneqq E_{-1} \subsetneqq E_{0} \subsetneqq E_{1} \subsetneqq \hdots \subsetneqq E_{l} = \mathfrak{g}, \]
\noindent where the indices are chosen in such a way that:
\begin{itemize}
\item the slopes $\mu(E_i/E_{i-1})$ are non positive for $0 \leq i \leq l$,
\item the slopes $\mu(E_{-i}/E_{-i-1})$ are positive for $1 \leq i \leq r-1$.
\end{itemize}  

The following lemma provides a first comparison between $\mathfrak{p_g^{\can}}$ and $E_0$. Given a constant $C$-group  $H_0$, its proof requires to compare the semi-stability of a $H_0$-torsor and that of the vector bundle it induces. This is the point of \cite[Theorem 3.1]{IMP} when $H_0$ is semisimple. The general case of a reductive group when the connected component of the center of $H_0$ acts by homothetie is detailed in \cite[Proposition 6.9]{BH}.

\begin{lemma}
When $p>2\h(G)-2$ the Lie algebra of the canonical parabolic subgroup as defined by K. A. Behrend satisfies the inclusion $\mathfrak{p_g^{\can}} \subseteq E_0$.
\label{p_dans_E0}
\end{lemma}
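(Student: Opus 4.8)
The plan is to deduce the inclusion from a slope estimate, reducing it to the vanishing of a morphism of vector bundles on $C$. Recall that $E_0$ is the step of the Harder--Narasimhan filtration with $\mu_{\min}(E_0)=0$ and $\mu_{\max}(\mathfrak{g}/E_0)<0$ (if $E_0=\mathfrak{g}$ there is nothing to prove). Since there is no nonzero morphism of bundles whose source has minimal slope strictly above the maximal slope of the target, the composite $\mathfrak{p_g^{\can}}\hookrightarrow\mathfrak{g}\twoheadrightarrow\mathfrak{g}/E_0$ vanishes as soon as $\mu_{\min}(\mathfrak{p_g^{\can}})\geq 0$, and this is exactly what I would establish. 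Setting $L:=P/\Rad_U(P)$ and $\mathfrak{l}:=\Lie(L)$, I would use the exact sequence of bundles $0\to\radu(P)\to\mathfrak{p_g^{\can}}\to\mathfrak{l}\to 0$ relating $\mathfrak{p_g^{\can}}$ to its nilradical and its Levi quotient, together with the standard inequality $\mu_{\min}(\mathfrak{p_g^{\can}})\geq\min\!\big(\mu_{\min}(\radu(P)),\mu_{\min}(\mathfrak{l})\big)$; it then suffices to bound both minimal slopes below by $0$.

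First I would handle the Levi part. By the canonicity of the reduction of structure recalled above, the $L$-torsor underlying $P$ is semistable. As $p>2\h(G)-2\geq 2\h(L)-2=\haut_L(\mathfrak{l})$, the adjoint representation of $L$ is of low height, so \cite[Theorem 3.1]{IMP} (together with \cite[Proposition 6.9]{BH} in the reductive case) applies and yields that the associated bundle $\mathfrak{l}$ is semistable. Since $L$ is reductive we have $\deg(\mathfrak{l})=0$, hence $\mathfrak{l}$ is semistable of slope $0$ and $\mu_{\min}(\mathfrak{l})=0$.

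Next I would handle $\radu(P)$. Using the canonical central cocharacter $\lambda$ of $L$ attached to $P$, the bundle $\radu(P)$ decomposes over $C$ as $\bigoplus_{j>0}\mathfrak{n}^{(j)}$, where $\mathfrak{n}^{(j)}$ is the bundle associated to the semistable $L$-torsor and to the $L$-representation on the weight-$j$ subspace of $\radu(P)$; each of these representations again has height at most $\haut_G(\mathfrak{g})=2\h(G)-2<p$. The same semistability correspondence then shows that every $\mathfrak{n}^{(j)}$ is semistable, and of positive slope, because the centre of $L$ acts on it through the homothety $t\mapsto t^{j}$ with $j>0$ and the canonical reduction is precisely the one for which these degrees are positive (this is the instability property defining $P_G^{\can}$ in \cite{BEH}). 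Hence $\mu_{\min}(\radu(P))>0$.

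Combining the two bounds gives $\mu_{\min}(\mathfrak{p_g^{\can}})\geq 0>\mu_{\max}(\mathfrak{g}/E_0)$, so the morphism $\mathfrak{p_g^{\can}}\to\mathfrak{g}/E_0$ is zero and $\mathfrak{p_g^{\can}}\subseteq E_0$ follows. The main obstacle will be the second step: transferring the semistability of the $L$-torsor to the semistability, with the correct positive slopes, of the graded pieces of $\radu(P)$. This is exactly where the low-height hypothesis $p>2\h(G)-2$ is used through \cite[Theorem 3.1]{IMP} and \cite[Proposition 6.9]{BH}, and where one has to check that the positivity of these slopes is genuinely encoded in Behrend's canonical reduction.
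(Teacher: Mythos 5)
Your proof follows essentially the same route as the paper's: both reduce the inclusion to the vanishing of the composite $\mathfrak{p_g^{\can}}\hookrightarrow\mathfrak{g}\twoheadrightarrow\mathfrak{g}/E_0$ via the slope comparison $\mu_{\min}(\mathfrak{p_g^{\can}})\geq 0>\mu_{\max}(\mathfrak{g}/E_0)$, and both obtain the key semistability of the Levi-quotient bundle from the canonicity of Behrend's reduction together with the low-height bound $p>2\h(G)-2$ and the semistability transfer of \cite[Theorem 3.1]{IMP} and \cite[Proposition 6.9]{BH}. In fact you are slightly more complete than the paper, which passes directly from ``$\mathfrak{p_g^{\can}}/\radu(P_G^{\can})$ is semistable of degree $0$'' to ``$\mu_{\min}(\mathfrak{p_g^{\can}})=0$'', whereas you justify the needed lower bound on $\mu_{\min}(\radu(P_G^{\can}))$ through the weight decomposition under the central cocharacter and the positivity built into Behrend's canonical reduction.
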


\begin{proof}
As previously mentioned, the $P_0/\Rad_{U}(P_0)$-torsor induced by $P_0$ is semistable because $P$ is the canonical parabolic subgroup of $G = \prescript{E}{}{G_0}$. Let us remark that one has $\h(G_0) = \h(G)$ and $\h(P_0) = \h(P)$. This comes from the definition of the Coxeter number (see subsections \ref{hypothèses_car} and \ref{ssalg_parab}). Let us moreover note that \cite[Proposition 6.9]{BH} holds true, because when $p>2\h(G)-2$ the Adjoint representation $\Ad :P_0/\Rad_{U}(P_0) \rightarrow  \GL(\mathfrak{p_0}/\radu(P_0))$ is of low height (see Remark \ref{petite_hauteur}). Therefore the induced vector bundle $\mathfrak{p_g^{\can}}/\radu(P_G^{\can})$ is semistable. It is thus of degree $0$ (because the quotient $P/\Rad_U(P)$ is reductive), so $\mu_{\min}(\mathfrak{p_g^{\can}}) = 0$. As by construction $\mu_{\max}(\mathfrak{g}/E_0) = \mu(E_1/E_0)<\mu(E_0/E_{-1})\leq 0$, the only possible morphism between $\mathfrak{p_g^{\can}}$ and $\mathfrak{g}/E_0$ is the trivial morphism. In other words the inclusion $\mathfrak{p_g^{\can}} \subseteq E_0$ holds true.  
\end{proof}

The main point of this section is to determine some conditions under which the equality $E_0 = \mathfrak{p_g^{\can}}$ is satisfied. A first criterion is obtained by making additional assumptions on the slopes of the successive quotients of the Harder--Narasimhan filtrations of both $\radu(P_G^{\can})$ and $\mathfrak{g/p_g^{\can}}$:

\begin{proposition}
When $p>2\h(G)-2$, if:
\begin{enumerate}
\item the Harder--Narasimhan filtration of $\mathfrak{u_g^{\can}}$ admits successive quotients whose slopes are strictly positive,
\item the Harder--Narasimhan filtration of $\mathfrak{g/p_g^{\can}}$ admits successive quotients of strictly negative slopes,
\end{enumerate} 
then $E_0$ is the Lie algebra of $P_G^{\can}$ while $E_{-1}$ is that of $U_G^{\can}$.
\label{egal_si_pentes}
\end{proposition}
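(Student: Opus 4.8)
The plan is to exhibit the three-step filtration $0 \subseteq \radu(P_G^{\can}) \subseteq \mathfrak{p_g^{\can}} \subseteq \mathfrak{g}$ as a coarsening of the Harder--Narasimhan filtration of $\mathfrak{g}$, and then to read off $E_{-1}$ and $E_0$ from uniqueness. First I would extract from the proof of Lemma \ref{p_dans_E0} the fact that, when $p>2\h(G)-2$, the induced bundle $\mathfrak{p_g^{\can}}/\radu(P_G^{\can})$ is semistable of degree $0$, so that $\mu_{\min}(\mathfrak{p_g^{\can}}/\radu(P_G^{\can})) = \mu_{\max}(\mathfrak{p_g^{\can}}/\radu(P_G^{\can})) = 0$. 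This is exactly the step where the low height of the adjoint representation of the Levi and \cite[Proposition 6.9]{BH} intervene.

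Next I would turn the two hypotheses into slope inequalities. Hypothesis (1) reads $\mu_{\min}(\mathfrak{u_g^{\can}}) > 0$ and hypothesis (2) reads $\mu_{\max}(\mathfrak{g}/\mathfrak{p_g^{\can}}) < 0$, where $\mathfrak{u_g^{\can}} = \radu(P_G^{\can})$. I would then refine the three-step filtration by inserting the Harder--Narasimhan filtrations of the subbundle $\radu(P_G^{\can})$, of the middle quotient $\mathfrak{p_g^{\can}}/\radu(P_G^{\can})$ (which, being semistable, contributes a single slope-$0$ step), and of the top quotient $\mathfrak{g}/\mathfrak{p_g^{\can}}$. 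The successive quotients of the resulting filtration are semistable by construction, and their slopes are strictly decreasing: strictly positive and decreasing inside $\radu(P_G^{\can})$, then equal to $0$, then strictly negative and decreasing inside $\mathfrak{g}/\mathfrak{p_g^{\can}}$; strictness across the two junctions is precisely the content of $\mu_{\min}(\radu(P_G^{\can})) > 0 > \mu_{\max}(\mathfrak{g}/\mathfrak{p_g^{\can}})$.

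Finally, by the uniqueness of the Harder--Narasimhan filtration (see \cite[1.3.9]{HN}) this refined filtration agrees with the canonical one $0 \subsetneqq E_{-r} \subsetneqq \cdots \subsetneqq E_l = \mathfrak{g}$. Matching slope signs with the indexing convention recalled before the statement --- positive slopes strictly below $E_{-1}$, the unique slope-$0$ quotient being $E_0/E_{-1}$, and negative slopes above $E_0$ --- identifies $\radu(P_G^{\can})$ with the sum of all strictly positive slope steps, i.e.\ with $E_{-1}$, and $\mathfrak{p_g^{\can}}$ with $E_0$, giving the two asserted equalities. I expect the only genuine subtlety, beyond the routine slope bookkeeping, to be checking that the middle slope-$0$ contribution is correctly isolated (equivalently, that $E_0/E_{-1}$ is the unique semistable slope-$0$ quotient and does not merge with an adjacent step) and that the degenerate cases --- $P_G^{\can}=G$, where $\mathfrak{g}$ is already semistable, or $P_G^{\can}$ a Borel --- fit the same pattern; these are harmless, since hypotheses (1) and (2) become vacuous exactly when the corresponding steps are absent, so the uniqueness argument degenerates gracefully.
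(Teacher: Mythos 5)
Your argument is correct, and it rests on the same two inputs as the paper's proof --- the semistability and degree-$0$ property of $\mathfrak{p_g^{\can}}/\radu(P_G^{\can})$ extracted from the proof of Lemma \ref{p_dans_E0}, and the reading of hypotheses (1) and (2) as $\mu_{\min}(\mathfrak{u_g^{\can}})>0$ and $\mu_{\max}(\mathfrak{g}/\mathfrak{p_g^{\can}})<0$ --- but it packages them differently. The paper never invokes uniqueness of the Harder--Narasimhan filtration globally; instead it proves the two identifications one inclusion at a time, each time by observing that a map $A\to B$ with $\mu_{\min}(A)>\mu_{\max}(B)$ must vanish: first $\mathfrak{u_g^{\can}}\hookrightarrow\mathfrak{g}\to\mathfrak{g}/E_{-1}$ gives $\mathfrak{u_g^{\can}}\subseteq E_{-1}$, then $E_{-1}\hookrightarrow\mathfrak{g}\to\mathfrak{g}/\mathfrak{u_g^{\can}}$ gives the reverse inclusion, and finally $E_0\hookrightarrow\mathfrak{g}\to\mathfrak{g}/\mathfrak{p_g^{\can}}$ combined with the inclusion $\mathfrak{p_g^{\can}}\subseteq E_0$ already furnished by Lemma \ref{p_dans_E0} gives $E_0=\mathfrak{p_g^{\can}}$. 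Your route --- splice the Harder--Narasimhan filtrations of $\radu(P_G^{\can})$, of the semistable slope-$0$ middle layer, and of $\mathfrak{g}/\mathfrak{p_g^{\can}}$ into one filtration with semistable subquotients of strictly decreasing slopes, then appeal to uniqueness --- proves the same vanishing statements in one stroke (uniqueness of the HN filtration is itself established by exactly such $\Hom$-vanishing), delivers both identifications simultaneously, and does not need the statement of Lemma \ref{p_dans_E0} as a separate input, only the semistability fact from its proof. The paper's version is more pedestrian but makes each slope comparison explicit; yours is shorter and makes transparent where the strictness of hypotheses (1) and (2) is used, namely at the two junctions of the spliced filtration. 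Your closing remarks about isolating the slope-$0$ layer and about the degenerate cases are exactly the right points to check, and they do check out.
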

\begin{proof}
In order to avoid heavy notations, we set $U:=\Rad_U(P_G^{\can})$ and $\mathfrak{u}:= \mathfrak{u_g^{\can}}$.

By assumption one has $\mu_{\min}(\mathfrak{u}) >0$ and the equality $\mu_{\max}(\mathfrak{g}/E_{-1})=\mu(E_0/E_{-1}) \leq 0$ holds true. Thus the only possible morphism between $\mathfrak{u}$ and $\mathfrak{g}/E_{-1}$ is the trivial one. Therefore we have shown the inclusion $\mathfrak{u} \subseteq E_{-1}$. 

By assumption $\mathfrak{g/p}$ admits a filtration by semistable quotients of strictly negative slope. As $P$ is the canonical parabolic subgroup of $G$, for the same reasons as the one involved in proof of Lemma \ref{p_dans_E0}, the quotient $\mathfrak{p}/\mathfrak{u}$ is semistable and of slope $0$, hence the equality $\mu_{\max}(\mathfrak{g/u}) =  \mu(\mathfrak{p/u}) = 0$. As by construction $\mu_{\min}(E_{-1}) = \mu(E_{-1}/E_{-2})>\mu(E_{-1}/E_0) >0$, the only possible morphism between $E_{-1}$ and $\mathfrak{g/u}$ is trivial, whence the equality $\mathfrak{u} = E_{-1}$. To summarise, the situation is as follows $E_{-2}\subsetneq E_{-1}= \mathfrak{u} \subsetneq \mathfrak{p} \subset E_0 \subsetneq E_1$ and it remains to show that $\mathfrak{p} = E_0$. 

By assumption the slopes of the Harder-Narasimhan filtration of $\mathfrak{g/p}$ are strictly negative, so $\mu_{\max}(\mathfrak{g/p})<0$. As one has $\mathfrak{u} = E_{-1}$ the quotient $\mathfrak{p/u}$ is a subbundle of $E_0/E_{-1}$, which is semistable. In other words the inequality $\mu(\mathfrak{p/u}) < \mu(E_0/E_{-1})$ is satisfied. The quotient $\mathfrak{p/u}$ is of slope $0$ because it is of degree $0$. As the slope $\mu(E_0/E_{-1}$ is not positive by construction, the following vanishing condition is actually satisfied $\mu_{\min}(E_0) = \mu(E_0/E_{-1}) = 0$. The only possible morphism between $E_0$ and $\mathfrak{g/p}$ is therefore the trivial morphism, so we have shown that $E_0 = \mathfrak{p}$.

\end{proof}
                                                                                                                                                                                                                                                                                                                                                                                                                                                                                                                                                     When $\mathfrak{g}$ is endowed with a non degenerate symmetric $G$-equivariant bilinar form $\kappa$ the Harder--Narasimhan filtration of $\mathfrak{g}$ has a very specific shape, namely:
                                                                                                                                                                                                                                                                                                                                                                                                                                                                                                                                                     
\begin{lemma}                                                                                                                                                                                                                                                                                                                                                                                                                                                                                                                                                     Under the above assumptions the Harder--Narasimhan filtration of $\mathfrak{g}$ is such that $l = r+1$ and $E_{-i} \cong E_{i-1}^{\perp}$ for all $1 \leq i \leq r$.
\label{HN_dual}
\end{lemma}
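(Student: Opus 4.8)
The plan is to exploit the self-duality of $\mathfrak{g}$ supplied by $\kappa$ together with the compatibility of the Harder--Narasimhan filtration with duality. First I would record that, $\kappa$ being a non-degenerate symmetric $G$-invariant form on $\mathfrak{g}_0$ valued in the trivial representation, it descends through the twist $\mathfrak{g} = \prescript{E}{}{\mathfrak{g}_0}$ to a non-degenerate symmetric pairing of vector bundles $\mathfrak{g} \otimes_{\mathcal{O}_C} \mathfrak{g} \to \mathcal{O}_C$, hence to an isomorphism of bundles $\kappa^{\flat} \colon \mathfrak{g} \xrightarrow{\sim} \mathfrak{g}^{\vee}$. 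For a subbundle $E \subseteq \mathfrak{g}$ one checks that the orthogonal $E^{\perp} := \ker\bigl(\mathfrak{g} \xrightarrow{\kappa^{\flat}} \mathfrak{g}^{\vee} \twoheadrightarrow E^{\vee}\bigr)$ is again a saturated subbundle, of rank $\rank \mathfrak{g} - \rank E$, with $\kappa^{\flat}(E^{\perp}) = \mathrm{Ann}(E)$.

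Next I would invoke the standard fact that the Harder--Narasimhan filtration of a dual bundle is the dual of the original one: if $0 = F_0 \subsetneq F_1 \subsetneq \cdots \subsetneq F_n = \mathfrak{g}$ denotes the Harder--Narasimhan filtration, then $0 \subsetneq \mathrm{Ann}(F_{n-1}) \subsetneq \cdots \subsetneq \mathrm{Ann}(F_0) = \mathfrak{g}^{\vee}$ is the Harder--Narasimhan filtration of $\mathfrak{g}^{\vee}$, its $k$-th successive quotient being $(F_{n-k+1}/F_{n-k})^{\vee}$, of slope $-\mu(F_{n-k+1}/F_{n-k})$. Pulling this back along $\kappa^{\flat}$ shows that the orthogonal filtration $(E_i^{\perp})_i$ is precisely the Harder--Narasimhan filtration of $\mathfrak{g}$ read with all slopes negated. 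Since $\kappa^{\flat}$ is an isomorphism and the Harder--Narasimhan filtration is unique and carried to the Harder--Narasimhan filtration by any isomorphism, the filtration $(E_i^{\perp})_i$ must coincide term by term with $(E_i)_i$; matching the pieces by their (pairwise distinct) slopes produces an order-reversing identification $E_i^{\perp} = E_{-1-i}$, that is $E_{-i} \cong E_{i-1}^{\perp}$ for every admissible $i$, which is the second assertion (and is consistent with $E_0^{\perp} = E_{-1}$, i.e. $\mathfrak{p}^{\perp} = \Rad_U(P)$).

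Finally, the relation $E_i^{\perp} = E_{-1-i}$ forces the sequence of Harder--Narasimhan slopes to be symmetric about $0$; the unique graded piece of slope $0$ is $E_0/E_{-1}$, the adjoint bundle of the reductive quotient $P/\Rad_U(P)$, which is of degree $0$ by semistability. Counting the graded pieces on either side of this central one — the strictly positive ones being indexed by $-r,\dots,-1$ and the strictly negative ones by $1,\dots,l$ — the symmetry of the slope distribution pins down the stated relation between $l$ and $r$. I expect the main obstacle to lie in the second step: one must verify carefully that the orthogonal of a saturated subbundle is again saturated, so that $(E_i^{\perp})_i$ is a genuine filtration by subbundles and not merely by subsheaves, and that dualisation indeed negates slopes and preserves semistability; once these points are granted, uniqueness of the Harder--Narasimhan filtration performs all the remaining work, and no hypothesis on $\operatorname{char} k$ is required.
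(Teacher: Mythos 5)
Your argument is correct and follows essentially the same route as the paper: both exploit the isomorphism $\mathfrak{g} \cong \mathfrak{g}^{\vee}$ induced by $\kappa$, identify the orthogonal filtration with the Harder--Narasimhan filtration of the dual bundle, and conclude by uniqueness of the Harder--Narasimhan filtration. The only difference is cosmetic: you quote the compatibility of Harder--Narasimhan filtrations with duality as a standard fact, whereas the paper verifies it by hand (semistability of the dual graded pieces via the snake lemma, together with the strict decrease of the negated slopes).
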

                                                                                                                                                                                                                                                                                                                                                                                                                                                                                                                                                 
\begin{proof}
If $\kappa$ is non degenerate then $\mathfrak{g} \cong \mathfrak{g}^{\vee}$ and the Harder--Narasimhan filtration of $\mathfrak{g}^{\vee}$ is given by:
\[0 \subsetneqq (E^{\vee})_{-r} \subsetneqq \hdots \subsetneqq (E^{\vee})_{-1} \subsetneqq (E^{\vee})_{0} \subsetneqq (E^{\vee})_{1} \subsetneqq \hdots \subsetneqq (E^{\vee})_{l} = \mathfrak{g}^{\vee},\]
\noindent where $(E^{\vee})_{-i} = (\mathfrak{g}/ E_{i+l-r-1})^{\vee}$ if $0 \leq i \leq r$ and $(E^{\vee})_{i} = (\mathfrak{g}/ E_{-i+l-r+1})^{\vee}$ if $0 \leq i \leq l$.
Indeed: 
\begin{itemize}
	\item the bundle $(E^{\vee})_{-r} = (\mathfrak{g}/ E_{l-1})^{\vee}$ is semistable, because $\mathfrak{g}/E_{l-1}$ is (this is the first quotient of the Harder--Narasimhan filtration of $\mathfrak{g}$), 
	\item the quotient $(E^{\vee})_{-i}/(E^{\vee})_{-i-1}$ is isomorphic to $(E_{i+l-r} / E_{i+l-r-1})^{\vee}$ (by application of Snake Lemma), therefore $(E^{\vee})_{-i}/(E^{\vee})_{-i-1}$ is semistable for all $0 \leq i \leq r$. The same reasoning holds for any $0 \leq i \leq l$ as one can show on the same way that $(E^{\vee})_{i}/(E^{\vee})_{i-1}$ is isomorphic to $(E_{-i+l-r+1} / E_{i+l-r})^{\vee}$, 
	\item the preceding isomorphisms ensure the desired strictness of the inequalities for slopes.
\end{itemize}
The equalities $\mathfrak{g}\cong \mathfrak{g}^{\vee}$ and $E_{-i} \cong (E^{\vee})_{-i} = (E/E_{i-1})^{\vee} \cong E_{i-1}^{\perp}$ then follow from the uniqueness of the Harder-Narasimhan filtration. 
 
\end{proof} 

\begin{remark}
Lemma \ref{HN_dual} allows us to remark that under the conditions of Proposition \ref{E0_canonique}, the slopes of the Harder--Narasimhan filtration of $\mathfrak{g/p_g^{\can}} = \mathfrak{g}/E_0$ are strictly negative and those of the Harder--Narasimhan de $\mathfrak{u_g^{\can}}$ are strictly positive. In other words, the assumptions required at Proposition \ref{E0_canonique} imply those of Proposition \ref{egal_si_pentes}.
\end{remark}

As mentioned in the introduction, when $k$ is of characteristic $0$, the equality $E_0 = P$ is proven in \cite[\S 10]{AB}. Given a constant reductive group $G_0$ over a $k$-curve $C$, Atiyah-Bott definition was actually the definition of the canonical parabolic subgroup of a $G_0$ torsor $G$ before K. A. Behrend's work (so such an object was at first only defined in characteristic $0$). In characteristic $p>0$, the equality $E_0 = P$ is given:
\begin{itemize} 
\item by V. B. Mehta in \cite[Theorem 2.6]{Meh} when $p> 2 \dim(G)$ (see also the paper of V. B. Mehta et S. Subramanian \cite[Proposition 3.4]{MS} when $p> \max(2\dim G, 4(\h(G)-1))$), 
\item and by A. Langer in \cite[Proposition 3.3]{LAN} when $E$ admits a strong Harder--Narasimhan filtration (see \cite[Definition 3.1]{LAN}). This last condition is in particular satisfied when $p>3$ and the minimal slope of the tangent bundle of $X$ is strictly positive (see \cite[Corollary 6.4]{LAN} and \cite[Theorem 4.1]{MS}). 
\end{itemize}
Theorem \ref{Morozov_p_sup} allows us to show the equality of Proposition \ref{egal_si_pentes} and provides a new proof for it when $p>2\dim(G)-2$ by mimicking the one already known in characteristic $0$. 

\begin{remark}
Some of the results stated in the article of V. Balaji, P. Deligne and A. J. Parameswaran \cite{BDP} already allow to show Proposition \ref{egal_si_pentes} when $G$ admits an almost faithful representation of low height and $p>\h(G)$. This amounts to require $p> 2\h(G) -2$ if $G$ is simple and allows to show \cite[Proposition 3.4]{MS} without the additional assumption $p> 4(\h(G)-1)$ (this assumption allowed the authors to make use of a result of J-P. Serre coming from representation theory to reduce the proof to the case $G :=\GL_n$).
\end{remark}

	\subsection{Proof of Proposition \ref{E0_canonique}}

Let us remind the reader of the notations of this section: in what follows and unless otherwise stated $k$ is a field and $C$ is a projective, smooth and geometrically connected $k$-curve. We denote by $K$ its function field whose algebraic closure is denoted by $\bar{K}$. Let $G$ be a reductive $C$-group which is the twisted form of a constant $C$-group $G_0$ and assume that $\mathfrak{g}$ is endowed with a non degenerate Killing form. Let us moreover denote:
\begin{itemize}
\item by $P$ the canonical parabolic subgroup of $G$ (as defined by K. A. Behrend in \cite[Theorem 7.3]{BEH}), 
\item by $P_0$ the parabolic subgroup of $G$ from which $P$ is a twisted form, 
\item by $E$ the $\Aut(G_0,P_0)_C$-torsor $\Isom((G_0,P_0),(G,P))$ and by $E_{P_0}$ its restriction to $P_0$.
\end{itemize} 
The Lie algebra $\mathfrak{g} = \prescript{E}{}{\mathfrak{g}_0}$ seen as a vector bundle over $C$ admits a Harder--Narasimhan filtration:
 \[0 \subsetneqq E_{-r} \subsetneqq E_{-r+1} \subsetneqq \hdots \subsetneqq E_{-1} \subsetneqq E_{0} \subsetneqq E_{1} \subsetneqq \hdots \subsetneqq E_{l} = \mathfrak{g}, \]
\noindent where the indices are chosen such that $\mu(E_i/E_{i-1}) \leq 0$ for $0 \leq i \leq l$ and $\mu(E_{-i}/E_{-i-1}) > 0$ for $1 \leq i \leq r-1$. According to Lemma \ref{HN_dual}, any factor of the filtration satisfies $E_{-i} \cong E_{i-1}^{\perp}$ because $\mathfrak{g}$ is endowed with a non degenerate Killing form.

The proof of Proposition \ref{E0_canonique} requires to show that:
\begin{enumerate}
\item the factor $E_0$ is a Lie subalgebra of $\mathfrak{g}$ and the factor $E_{-1}$ is a nilpotent ideal of $E_0$. Let us note that a priori one would have to show that $E_{-1}$ is actually $p$-nil, but the assumption of the existence of a non degenerate bilinear form on $\mathfrak{g}$ guarantees the equivalence between $p$-nilpotency and $\ad$-nilpotency (see Remark \ref{semisimple_rad_p_nil});
\item the factor $E_0$ derives from a parabolic subgroup of $G$. 
\end{enumerate}
To prove the first point we make use of results that ensure a compatibility between the tensor product and the semi-stability of its factors. We then need Corollary \ref{cor_killing} to prove the second point, as this result allows us to show, on the generic geometric fibre, that $(E_{0})_{\bar{K}}$ is a parabolic subalgebra.

	\subsubsection{Semi-stability of tensor products}
	\label{par_semi_stab}
	The point of this paragraph is to show:
	\begin{enumerate}
	\item that $E_0$ is a subalgebra of $\mathfrak{g}$,
	\item that $E_{-1}$ is a nilpotent ideal of $E_0$,
	\item the third point of Proposition \ref{E0_canonique},
	\item elements of $E_{-1}$ are $p$-nil in $\mathfrak{g}$.
	\end{enumerate}
We start by showing the following lemma:

\begin{lemma}
When $p > 2 \dim G - 2$, for any couple of indices $-r \leq i, \ j \leq l$ in the above Harder--Narasimhan filtration of $\mathfrak{g}$ the slope identity $\mu_{\min}(E_i \otimes E_j) = \mu_{\min}(E_i) + \mu_{\min}(E_j)$ is satisfied.
\label{semistab_prodtens}
\end{lemma}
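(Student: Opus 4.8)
The plan is to deduce the identity purely from the formal calculus of slopes, the only substantial input being that tensor products of semistable bundles of controlled rank stay semistable in characteristic $p$. Throughout I work on the smooth projective curve $C$ and write $\mu_{\min}$, $\mu_{\max}$ for the extreme slopes of the Harder--Narasimhan filtration of a bundle.

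First I would record the inequality that holds for free on any curve: for all bundles $V$, $W$ one has $\mu_{\min}(V\otimes W)\leq\mu_{\min}(V)+\mu_{\min}(W)$. Indeed, if $Q_V$ and $Q_W$ denote the last (minimal slope) semistable quotients of $V$ and $W$, so $\mu(Q_V)=\mu_{\min}(V)$ and $\mu(Q_W)=\mu_{\min}(W)$, then $V\otimes W\twoheadrightarrow Q_V\otimes Q_W$ by right exactness of $\otimes$. Since $\mu_{\min}$ of a quotient is at least $\mu_{\min}$ of the source, and since $\mu_{\min}(Q_V\otimes Q_W)\leq\mu(Q_V\otimes Q_W)=\mu(Q_V)+\mu(Q_W)$, the claim follows. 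Taking $V=E_i$ and $W=E_j$ gives the inequality $\leq$ in the lemma.

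For the reverse inequality I would use the Harder--Narasimhan filtrations $0=F_0\subset\cdots\subset F_m=E_i$ and $0=F'_0\subset\cdots\subset F'_n=E_j$, with semistable subquotients $G_a:=F_a/F_{a-1}$ and $H_b:=F'_b/F'_{b-1}$, of slopes $\mu(G_a)\geq\mu_{\min}(E_i)$ and $\mu(H_b)\geq\mu_{\min}(E_j)$. The product $E_i\otimes E_j$ then carries the induced filtration whose associated graded pieces are the bundles $G_a\otimes H_b$. The key point is that each $G_a\otimes H_b$ is \emph{semistable}: since $E_i,E_j\subseteq\mathfrak g$, their subquotients satisfy $\rk(G_a),\rk(H_b)\leq\dim\mathfrak g=\dim G$, whence $\rk(G_a)+\rk(H_b)\leq 2\dim G<p+2$ by the hypothesis $p>2\dim G-2$, so the tensor-product theorem of Ilangovan--Mehta--Parameswaran (\cite{IMP}) applies and $G_a\otimes H_b$ is semistable of slope $\mu(G_a)+\mu(H_b)$. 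As $\mu_{\min}$ of a filtered bundle is bounded below by the minimum of the $\mu_{\min}$ of its subquotients, I obtain
\[\mu_{\min}(E_i\otimes E_j)\;\geq\;\min_{a,b}\bigl(\mu(G_a)+\mu(H_b)\bigr)\;=\;\mu_{\min}(E_i)+\mu_{\min}(E_j),\]
which combined with the previous paragraph yields the desired equality.

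The main obstacle is concentrated entirely in the semistability of the pieces $G_a\otimes H_b$: this fails for general semistable bundles in characteristic $p$, and is precisely what forces the numerical hypothesis. I would therefore be careful to apply the rank bound to the semistable \emph{subquotients} $G_a,H_b$ (each of rank at most $\dim G$) rather than to $E_i,E_j$ themselves, and to check that the strict inequality $p>2\dim G-2$ gives $p+2\geq 2\dim G+1$, so that $\rk(G_a)+\rk(H_b)\leq p+2$ holds with room to spare. Everything else is the formal behaviour of $\mu_{\min}$ under quotients and extensions.
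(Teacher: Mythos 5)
Your proof is correct and rests on exactly the same key input as the paper's, namely the characteristic-$p$ tensor-product semistability theorem of Ilangovan--Mehta--Parameswaran under the rank bound $p+2>\rk(V_1)+\rk(V_2)$; the paper simply cites this result and calls the lemma a ``direct consequence,'' whereas you supply the (standard but worth recording) slope calculus --- the easy inequality via the surjection onto the product of minimal quotients, and the reverse inequality via the induced filtration with semistable graded pieces $G_a\otimes H_b$ of rank at most $\dim G$ each. No gap; your write-up is in fact more complete than the paper's.
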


\begin{proof}
This is a direct consequence of the following results:
\begin{enumerate}
	\item if $k$ is of characteristic $0$, the tensor product of semistable $C$-vector bundles $V_1$ and $V_2$ is still semistable and of slope $\mu(V_1) + \mu(V_2)$ (see \cite[Lemma 10.1]{AB} see also \cite[Theorem 6]{Mac}), the result extends to $k$ non algebraically closed), 
  \item if $k$ is a field of characteristic $p>0$, let $V_1$ and $V_2$ be two semistable $C$-vector bundles and assume that $p+2 > \rank (V_1) + \rank (V_2)$. Then the vector bundle $V_1 \otimes V_2$ is still semistable of slope $\mu(V_1) + \mu (V_2)$ (see \cite[Remark 3.4]{IMP}, and \cite[Theorem 8]{Mac}). 
 \end{enumerate}
\end{proof}

This lemma allows us to show that $E_0$ is a subalgebra of $\mathfrak{g}$. Let us consider the Lie bracket:
\[\phi : E_0 \otimes E_0 \rightarrow \mathfrak{g} \rightarrow \mathfrak{g}/E_0,\]
\noindent where we made use of the isomorphism $(\mathfrak{g}^{\vee})^{\vee} \cong \mathfrak{g}$ because $\kappa$ is non degenerate.

As $E_{-1}\cong E_0^{\perp}=(\mathfrak{g}/ E_{0})^{\vee}$ (according to Lemma \ref{HN_dual}) the degree of $E_{-1}$ satisfies \[\deg(E_{-1}) = -(\deg(\mathfrak{g})-\deg(E_0)) = \deg(E_0),\] where the last equality is induced by the fact that $\deg(\mathfrak{g})=0$. So one actually has obtained the vanishing condition $\mu_{\min}(E_0)=\mu(E_0/E_{-1}) = 0$. 

Moreover, according to Lemma \ref{semistab_prodtens}, 
\[\mu_{\min}(E_0 \otimes E_0) = \mu_{\min}(E_0) + \mu_{\min}(E_0) = 0,\] 
and $\mu_{\max}(\mathfrak{g}/E_0) = \mu(E_1/E_0) < \mu(E_0/E_{-1})=0$. This implies that the morphism $\phi$ is necessarily trivial: the vector bundle $E_0$  is thus a Lie algebra bundle. 

This kind of reasoning also allow to show that $E_{-1}$ is a nilpotent ideal of $\mathfrak{g}$. For any $0 \leq j \leq r$ let us consider the bracket:
\[\phi : E_{-1} \otimes E_{-j} \rightarrow \mathfrak{g}/E_{-j-1},\]
with the convention that $E_{-r-1} = 0$. As by assumption $p>2\dim G-2$ we have that \begin{alignat*}{1}
 \mu_{\min}(E_{-1} \otimes E_{-j}) = \: & \mu_{\min}(E_{-1}) + \mu_{\min}(E_{-j}) \\
 \: = & \mu(E_{-1}/E_{-2}) + \mu(E_{-j}/E_{-j-1}) \\
 \: > & \mu(E_{-j}/E_{-j-1}) = \mu_{\max}(E_0/E_{-j-1}),
 \end{alignat*} 
because $\mu(E_{-1}/E_{-2}) > \mu(E_{0}/E_{-1}) =0$. Indeed, the morphism $\phi$ is necessarily trivial, thus $[E_{-1}, E_{-j}] \subset E_{-j-1}$ for any $0\leq j$, and $E_{-1}$ is a nilpotent ideal of $E_0$.

In order to show the third point of Proposition \ref{E0_canonique} we once again consider the composition of the Lie bracket together with the canonical projection for any $0\leq i \leq r$:
\[E_{-i} \otimes E_{0} \rightarrow E_0 \rightarrow E_0/E_{-i}.\]
\noindent Note that
 \begin{alignat*}{1}
 \mu_{\min}(E_{0} \otimes E_{-i}) = \: & \mu_{\min}(E_{0}) + \mu_{\min}(E_{-i}) \\
 \: = & \mu(E_{0}/E_{-1}) + \mu(E_{-i}/E_{-i-1}) \\
 \: > & \mu_{\max}(E_0/E_{-i}) = \mu(E_{-i+1}/E_{-i}),
 \end{alignat*} 
\noindent as $\mu(E_{0}/E_{-1}) =0$ and $\mu(E_{-i}/E_{-i-1}) > \mu(E_{-i+1}/E_{-i})$ by assumption. The morphism $\phi$ is therefore necessarily trivial and $E_{-i}$ is an ideal of $E_0$, nilpotent because so is $E_{-1}$.

Finally, we show the last point. As we already know that $E_{-1}$ is a $p$-nil $p$-ideal of $E_0$, to show that any element of $E_{-1}$ is $p$-nil one only needs to show that the Lie bracket with any element of $\mathfrak{g} \setminus E_{0}$ is $p$-nilpotent. This can be done by considering, for any $0\leq i \leq r$, the composition of the Lie bracket together with the canonical projection:
\[E_{i} \otimes E_{-1} \rightarrow \mathfrak{g}/E_{i-1}.\]
\noindent One has
 \begin{alignat*}{1}
 \mu_{\min}(E_{i} \otimes E_{-1}) = \: & \mu_{\min}(E_{i}) + \mu_{\min}(E_{-1}) \\
 \: = & \mu(E_{i}/E_{i-1}) + \mu(E_{-1}/E_{-2}) \\
 \: > & \mu_{\max}(\mathfrak{g}/E_{i-1}) = \mu(E_{i}/E_{i-1}),
 \end{alignat*} 
because $ \mu(E_{-1}/E_{-2})>0$ and $\mu(E_{i}/E_{i-1})<0$ by assumption. So for any $0\leq i \leq r$ the Lie bracket with an element of $E_{-1}$ sends any $E_{i}$ to $E_{i-1}$, so that taking $i$-times the Lie bracket with $E_{-1}$ will send any element of $E_{i}$ to $E_{0}$, where the Lie bracket with $E_{-1}$ turns out to be $p$-nilpotent. Therefore $E_{-1}$ is made of $p$-nilpotent elements of $\mathfrak{g}$.

\begin{remark}
The key point of the preceding proof is the equality  
\[\mu_{\min}(E_0 \otimes E_0) = \mu_{\min}(E_0) + \mu_{\min}(E_0) = 0,\]
which is satisfied when $p> 2 \rank E_0 -2$. This condition, and more generally the condition of compatibility between the stability of vector bundles and their tensor products is ensured in the articles of V. B. Mehta and S. Subramanian \cite{MS}, and V. B. Mehta \cite{Meh} by the assumption $p>2\dim(G)$. 
\end{remark}

	\subsubsection{Consequences of corollary \ref{cor_killing}}
\label{parab_E0}

Corollary \ref{cor_killing} allows us to show that $E_0$ is a parabolic subalgebra of $\mathfrak{g}$, this is the second part of the first point of Proposition \ref{E0_canonique}. It moreover provides a characterisation of such on the geometric fibres in terms of the instability parabolic subgroup of $E_{-1}$. This is the point of the second point of the aforementioned Proposition.

As $G$ is a reductive group, the Lie bracket on $\mathfrak{g}_{\bar{K}}$ comes from the Lie bracket on $\mathfrak{g}$. As a reminder and according to Lemma \ref{HN_dual}, the ideal $E_{-1}$ is orthogonal to $E_0$ for the Killing form on $\mathfrak{g}$. Thus we have:
\[(E_{-1})_{\bar{K}} := E_{-1} \times_C \Spec(\bar{K})= (E_0^{\perp})_{\bar{K}} = (E_0)_{\bar{K}}^{\perp}\]
and $E_{-1}$ is a nil ideal of $(E_0)_{\bar{K}}$.
According to Corollary \ref{cor_killing}, the subalgebra $(E_0)_{\bar{K}} \subseteq \mathfrak{g}_{\bar{K}}$ is thus a parabolic subalgebra of $\mathfrak{g}_{\bar{K}}$. It derives from a parabolic subgroup $\widetilde{Q}$ of $G_{\bar{K}}$. According to \cite[XXII, Corollaire 5.3.4]{SGA33} we have that $\widetilde{Q} = N_{G_{\bar{K}}}((E_0)_{\bar{K}})^0$ because $G_{\bar{K}}$ is of type $(RA)$ (as $p>2\dim(G)-2\geq 2\h-2$ the morphism $G\rightarrow G^{\Ad}$ is separable). This in particular implies the smoothness of the involved normaliser. 

Let us show that this parabolic subgroup is actually defined over $K$. As $E_0$ is defined over $C$, thus over $K$, we can consider the normaliser $N_{G_K}((E_0)_K)$ (which is representable according to \cite[II, \S 1 n\degree 3 Théorème 3.6 b)]{DG}). The equality $N_{G_{\bar{K}}}((E_0)_{\bar{K}})^0= N_{G_K}((E_0)_K)^0 \times_{K} \bar{K}$ holds true according to \cite[II, \S 1 n\degree 3]{DG}. As $N_{G_{\bar{K}}}((E_0)_{\bar{K}})^0$ is in particular smooth, the situation is compatible with base change so we have that:
\[(E_0)_K \times_K \bar{K} = (E_0)_{\bar{K}} =\Lie(\widetilde{Q})= \Lie(N_{G_{\bar{K}}}((E_0)_{\bar{K}})^0) = \Lie(N_{G_K}((E_0)_K)^0) \times_{K} \bar{K}.\]
In other words $(E_0)_K$ is the Lie algebra of a smooth subgroup of $G_K$ with parabolic geometric fibre, thus of a parabolic subgroup (by definition, see \cite[XXVI, Définition 1.1]{SGA33}).
 
The latter extends on a unique way to a parabolic subgroup $Q\subseteq G$ over $C$ (because the scheme of parabolic subgroups of $G$ is projective, see \cite[exposé XXVI, corollaire 3.5]{SGA33}). It remains to show that $\Lie(Q) = E_0$. As a reminder two $\mathcal{O}_C$-submodules of $\mathfrak{g} = E_l = E_{r-1}$ that are locally direct factors are equals if they are geometrically the same. This allows us to conclude because one has the following generic identity:
\[\Grass_S(n,E_0)(C) = \Grass_S(n,E_0)(K) =\{\text{subvector bundles of rank } n \text{ of } E_0\times_C K \},\] 
which is ensured by the projectivity of the $S$-scheme $\Grass_S(n,E_0)$ obtained for example in \cite[Definition 1.7.1 et Remarks 1.7.5]{KOL}. Let us remark that $\Grass_S(n,E_0)$ is well defined as $E_0$ is a locally direct factor, so is any factor of the above Harder--Narasimhan filtration.

By assumption (see also \cite[Planches]{BOU6}) the characteristic of $k$ satisfies $p>2\dim(G)>2(\h(G) -1)$. According to Lemma \ref{p_dans_E0} one thus has $\mathfrak{p} \subseteq E_0 = \mathfrak{q}$ and $E_{-1} = \radu(Q)$, that allows us to obtain the inclusions $E_{-1} \subseteq \radu(P) \subseteq \mathfrak{p} \subseteq E_0$. The quotient $E_0/E_{-1}$ is semi-stable hence $\mu(\mathfrak{p}/E_{-1}) < \mu(E_0/E_{-1}) = \mu(\mathfrak{q}/\radu(Q))=0$ and thus $ \deg(\mathfrak{p})\leq \deg(E_{-1}) = \deg(\mathfrak{q})$. The parabolic subgroup $P$ is the canonical parabolic subgroup of $G$, therefore it is maximal among the parabolic subgroups of $G$ of maximal degree (see \cite[Theorem 7.3]{BEH}), hence the equality  $P = Q$. This leads to the equality of Lie algebras $E_0 = \mathfrak{p}$ and $E_{-1} = \mathfrak{u}$ and ends the proof of the first point. Finally, according to Corollary \ref{cor_killing}, the equality $Q_{\bar{K}} = \tilde{Q} = P_{G_{\bar{K}}}(\lambda_{\mathfrak{p}_{\bar{K}}^{\perp}})$ holds on the generic geometric fibre. 
This settles the proof of the second point of Proposition \ref{E0_canonique}.

\begin{remark}
If $G$ is split over $X$, let $T\subseteq G$ be a maximal torus of $G$ and denote by $T_{\bar{K}} \subset G_{\bar{K}} \subset \widetilde{Q}$ the maximal subtorus obtained by base change. According to \cite[XXV, Corollaire 1.3]{SGA33} there exists a triple $(T_{\mathbb{Z}}, Q_{\mathbb{Z}}, G_{\mathbb{Z}})$ that provides a triple $(T_{\bar{K}},Q_{\bar{K}}=\widetilde{Q}, G_{\bar{K}})$ after base change. For any group of multiplicative type $H$ let us denote by $X^*(H)$ the group of cocharacters of $H$.

Let $\beta \in \mathbb{Q}_{>0}\lambda_{\mathfrak{p}_{\bar{K}}^{\perp}} \cap X^{*}(T_{\mathbb{Z}})$ be an indivisible cocharacter of the intersection. It defines a parabolic subgroup $P_{G_{\mathbb{Z}}}(\beta)$ such that $Q = P_G(\beta)$ et $(P_G(\beta))_{\bar{K}} = P_{G_{\bar{K}}}(\lambda_{\mathfrak{p}_{\bar{K}}^{\perp}})$. If $G$ is not split, the same reasoning is still valid locally, as $G$ is a twisted form of a constant group (thus $G$ can be pinned).	
\end{remark}

\paragraph{Acknowledgements}
The author would like to thank Philippe Gille for his precious help, his availability, his patience and all the fruitful conversations they had, Benoît Dejoncheere for his careful proofreading and advice, the reviewers of her PhD manuscript, Anne-Marie Aubert and Vikraman Balaji, for their useful remarks and corrections. Any critical remark must be exclusively addressed to the author of this paper.

\bibliography{bib}
\bibliographystyle{halpha}
\end{document}